\documentclass[12pt]{amsart}
\usepackage{amsfonts, amssymb, amsmath, mdframed, tikz-cd, comment, anyfontsize}
\usepackage[backend=bibtex,giveninits=true,maxbibnames=99,maxcitenames=99,maxalphanames=99,
sorting=nyt,style=alphabetic,useprefix=true,url=false,doi=false,isbn=false]{biblatex}
\renewbibmacro{in:}{}
\usetikzlibrary{cd,decorations.pathmorphing,decorations.pathreplacing,patterns}
\def\R{\mathbb{R}}

\def\Z{\mathbb {Z}}
\def\F{\mathbb{F}}
\def\ilim{\varprojlim}
\def\dlim{\varinjlim}

\usepackage[colorlinks=true, pdfstartview=FitV, linkcolor=blue, 
      citecolor=blue]{hyperref}
\usepackage{amsfonts, amssymb, amsmath, mdframed, graphicx}
\usepackage{bbm, a4wide}
\usepackage{appendix}

\DeclareMathAlphabet{\mymathbb}{U}{bbold}{m}{n}

\newcommand{\AAA}{\mathcal{A}}

\newcommand{\be}{\begin{equation}}
\newcommand{\ee}{\end{equation}}

\newtheorem{theorem}{Theorem}[section]
\newtheorem{prop}[theorem]{Proposition}
\newtheorem{cor}[theorem]{Corollary}
\newtheorem{lem}[theorem]{Lemma}

\theoremstyle{definition}
\newtheorem{definition}[theorem]{Definition}
\newtheorem{example}[theorem]{Example}
\newtheorem{remark}[theorem]{Remark}

\title{Covers of Tiling Spaces}

\author{Franz G\"ahler}
\address{Fakult\"at f\"ur Mathematik, Universit\"at Bielefeld, \newline
  \indent  Postfach 100131, 33501 Bielefeld, Germany}
\email{gaehler@math.uni-bielefeld.de}

\author{Jianlong Liu}
\address{Department of Mathematics, University of Texas, \newline
  \indent 2515 Speedway, PMA 8.100 Austin, TX 78712, USA}
\email{jlliu@utexas.edu}

\author{Lorenzo Sadun}
\address{Department of Mathematics, University of Texas, \newline
  \indent 2515 Speedway, PMA 8.100 Austin, TX 78712, USA}
\email{sadun@math.utexas.edu}

\begin{document}
\sloppy

\begin{abstract}
  We study the ways that one tiling space can be a finite regular
  cover of another. We classify all of the finite regular covers of a
  tiling space via its structure as an inverse limit space. If the
  tiling space $\Omega$ can be written as an inverse limit
  $\ilim \Gamma_n$, then the \'etale fundamental group of $\Omega$,
  which is defined via a limit of covers, is isomorphic to the inverse
  limit $\hat \pi_1(\Omega) := \ilim \hat \pi_1(\Gamma_n)$ of the
  profinite completions of the fundamental groups
  $\pi_1(\Gamma_n)$. This isomorphism allows us to construct all
  covers of tiling spaces and to use those covers to distinguish
  spaces that have identical cohomology groups.\end{abstract}

\keywords{Aperiodic tilings, finite regular covers, profinite fundamental group,
  \'etale fundamental group}
\subjclass[2020]{52C23, 57M10, 20E18}

\maketitle

\section{Introduction}\label{sec:Introdution}

\subsection{Background} 

Let $T$ be a tiling of $\R^d$. We consider a space $\Omega$ of tilings (often denoted $\Omega_T$) 
that resemble $T$, in the sense that a tiling $T'$ is in
$\Omega$ if every pattern that appears in $T'$ can be found somewhere in $T$. Under standard assumptions about
$T$, $\Omega$ is a compact metric space. The group $\R^d$ of translations acts on $\Omega$, making $\Omega$
an abstract dynamical system. Since the 1990s, people have used topological and dynamical properties of 
$\Omega$, especially the spectrum of the group action and the \v Cech cohomology groups $\check H^k(\Omega)$, 
to understand geometric and statistical properties of tilings like $T$. 

For tilings generated by a substitution $\sigma$, Anderson and Putnam (\cite{MR1631708}) showed how to express 
$\Omega$ as an inverse limit of a compact branched manifold $\Gamma$ under an expansive self-map 
(also denoted $\sigma$) induced by
the substitution. This implies that the \v Cech cohomology 
$\check H^k(\Omega)$ is the direct limit of $H^k(\Gamma)$ under the action of the pullback map $\sigma^\ast$. 
This construction was generalized by G\"ahler (\cite{gaehler02}), by Sadun (\cite{MR2014868}), and by
Bellissard, Benedetti and Gambaudo (\cite{MR2193205}) to apply to all tilings meeting mild assumptions,
not just substitution tilings. As a result, \v Cech cohomology became a useful and readily computable
invariant of tiling spaces \cite{SadBook}. Not only can it be used to tell 
spaces apart, but many important properties of tilings, such as the possible deformations of the geometry of 
the tiles or the rate at which Birkhoff sums converge to ergodic averages, can be expressed in cohomological 
terms (\cite{MR1971208}, \cite{MR2201938}, \cite{MR2437225}, \cite{MR3851781}).

Cohomology, however, is not a complete invariant. There exist pairs of tiling spaces, such as the 
period doubling and Thue--Morse tilings of $\R$, that have isomorphic cohomologies (with the same order structure)
without being homeomorphic spaces. This is not suprising, since $H^1(\Gamma_n)$ is dual to the abelianization of 
$\pi_1(\Gamma_n)$ and since abelianization loses information. To probe the topology of $\Omega$ more deeply, 
it makes sense to look at non-abelian invariants. This can be done either by constructing something that maps
from $\Gamma_n$ to $\Gamma_{n+1}$ and taking a direct limit, or by constructing something that maps from
$\Gamma_{n+1}$ to $\Gamma_{n}$ and taking an inverse limit. 

\begin{itemize}

\item Using the first strategy, we can pick a fixed group $G$ and look at $\textnormal{Hom}(\pi_1(\Gamma_n), G)$. The 
map $\sigma_{n+1}: \Gamma_{n+1} \to \Gamma_n$ induces a pullback map $\sigma_{n+1}^\ast: \textnormal{Hom}(\pi_1(\Gamma_n), G) \to 
\textnormal{Hom}(\pi_1(\Gamma_{n+1}),G)$. We define the {\em representation variety} with group $G$ to be
\[ V_G(\Omega) := \dlim \textnormal{Hom}(\pi_1(\Gamma_n), G). \]
We can likewise study the quotient $V_G(\Omega)/G$ of $V_G(\Omega)$ by inner automorphisms of $G$ 
or the quotient $V_G(\Omega)/\textnormal{Aut}(G)$ by all automorphism of $G$. 
Erdin showed in \cite{erdin2010patternequivariantrepresentationvariety} that
$V_G/G$ (and therefore $V_G$ and $V_G/\textnormal{Aut}(G)$) is a topological invariant,
independent of how we write $\Omega$ as an inverse limit.
Note that if $G$ is abelian, then $\textnormal{Hom}(\pi_1(\Gamma_n),G)=H^1(\Gamma_n, G)$, 
whose direct limit is $\check H^1(\Omega, G)$, making the representation variety a 
non-abelian generalization of (first) cohomology. 

For any element of $V_G(\Omega)$ defined by a map from $\pi_1(\Gamma_n)$ to $G$, and for all $m \ge n$,
the image of $\pi_1(\Gamma_{m+1})$ is a subgroup of the 
image of $\pi_1(\Gamma_m)$. If $G$ is finite, the range stabilizes to 
a subgroup $G_0 \subset G$ and is already accounted for in $V_{G_0}(\Omega)$. For 
information specific to $G$, it makes sense to look at 
$\dlim \textnormal{Sur}(\pi_1(\Gamma_n), G)$ (and its quotients by $G$ or $\textnormal{Aut}(G)$),
where $\textnormal{Sur}(\pi_1(\Gamma_n),G)$ is the set of surjective homomorphisms from $\pi_1(\Gamma_n)$
to $G$.

\item For any group $G$, the {\em profinite completion} $\hat G$ of $G$ is 
the inverse limit of all quotients of $G$ by finite-index normal subgroups. For instance, the
profinite completion of the integers is the universal odometer $\hat \Z = \ilim (\Z/n\Z) 
=\prod_p \ilim_n (\Z/p^n\Z)$, where $p$ ranges over the primes. The map 
$\sigma_{n+1}: \Gamma_{n+1} \to \Gamma_n$ induces a map
${{}\hat{\sigma}_{n+1}}_\ast:\hat \pi_1(\Gamma_{n+1}) \to \hat \pi_1(\Gamma_n)$. 
The {\em profinite fundamental group} of $\Omega$, denoted
$\hat \pi_1(\Omega)$, is defined to be the inverse limit over $n$ of $\hat \pi_1(\Gamma_n)$.
The profinite fundamental group is often easy to compute from a particular presentation of $\Omega$ as
an inverse limit space. However, it is harder to see what $\hat \pi_1(\Omega)$ has to do with the
topology of $\Omega$, or even that it is independent of how we express $\Omega$ as an inverse limit space. 

\item 
We can consider finite regular covers 
of $\Omega$, each associated with a group of deck transformations. 
A finite regular cover $\Omega'$ of $\Omega$ may have its own finite regular covering space $\Omega''$, 
which is also a finite regular cover of $\Omega$. The {\em \'etale fundamental group} (\cite{milneLEC})
of $\Omega$, denoted $\pi_1^\textnormal{et}(\Omega)$, is the inverse limit of the groups of deck
transformations. This is manifestly a topological invariant of $\Omega$, but it is not immediately
clear how to compute $\pi_1^\textnormal{et}$. 

\end{itemize}

Each path component of $\Omega$ is contractible, so the naive definition of 
$\pi_1(\Omega)$ yields a trivial group. This does not imply that $\hat \pi_1(\Omega)$ or 
$\pi_1^\textnormal{et}(\Omega)$ are trivial! Since $\Omega$ is neither path-connected
nor locally simply-connected, the usual correspondence between covers of $\Omega$ and subgroups of 
$\pi_1(\Omega)$ does not apply. Instead, we will study $\hat \pi_1(\Omega)$ and 
$\pi_1^\textnormal{et}(\Omega)$ as alternatives to studying $\pi_1(\Omega)$.

\subsection{Results} In this paper we relate the three constructions. 
First we show how covers of tiling spaces are related to representation varieties: 
\begin{theorem}\label{thm:covers} Let $G$ be a finite group and let $\Omega$ be a 
space of repetitive tilings with finite local complexity. 
Regular covers of $\Omega$ whose group of deck transformations is isomorphic to $G$, modulo homeomorphisms that are lifts of the
identity on $\Omega$, are parametrized by 
\[ \dlim \textnormal{Sur}(\pi_1(\Gamma_n),G)/\textnormal{Aut}(G). \]
\end{theorem}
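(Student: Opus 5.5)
We write $\Omega = \ilim(\Gamma_n,\sigma_n)$ with each $\Gamma_n$ a finite CW complex (a connected branched manifold; connectedness follows from repetitivity) and with projections $\pi_n:\Omega\to\Gamma_n$. The plan has two halves, followed by a check of the equivalence relations.

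\emph{Constructing covers from surjections.} A surjection $\phi:\pi_1(\Gamma_n)\to G$ gives a connected regular $G$-cover $\Gamma_n^\phi\to\Gamma_n$. Pulling this back along $\sigma_{m}\circ\cdots$ gives covers $\Gamma_m^{\phi}$ of $\Gamma_m$ for $m\ge n$, corresponding to $\phi\circ(\sigma_{n,m})_*$. These are compatible, so
\[ \Omega^\phi := \ilim \Gamma_m^\phi \cong \Omega\times_{\Gamma_n}\Gamma_n^\phi \]
is a regular $G$-cover of $\Omega$: it is the pullback via $\pi_n$ of a covering. The pullback description shows that $\Omega^\phi$ depends only on the class of $\phi$ in the direct limit. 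Surjectivity at every stage makes each $\Gamma_m^\phi$ connected, hence $\Omega^\phi$ is connected, with deck group exactly $G$. Two surjections differing by an automorphism of $G$ give the same covering space, with the $G$-action twisted; modulo lifts of the identity, these are therefore the same cover. This yields a well-defined map from $\dlim \textnormal{Sur}(\pi_1(\Gamma_n),G)/\textnormal{Aut}(G)$ to covers.

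\emph{Every cover arises this way (the main obstacle).} Let $p:\Omega'\to\Omega$ be a connected regular $G$-cover. I would show that it is pulled back from some $\Gamma_n$. Because $\Omega$ is compact and $G$ is finite, there is a finite open cover $\{U_i\}$ of $\Omega$ over which $p$ trivializes, and the covering is encoded by a \v{C}ech $1$-cocycle $g_{ij}:U_i\cap U_j\to G$ that is locally constant. The sets $\pi_n^{-1}(V)$, with $V$ open in $\Gamma_n$, form a basis for the topology of $\Omega$ as $n$ varies. Using compactness and a Lebesgue-number argument, I would refine the cover to one pulled back from some $\Gamma_n$, on whose overlaps the $g_{ij}$ are constant. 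Concretely, I would take $n$ so large that $\pi_n$-fibers lie inside single trivializing sets, for example the transversals/patches of large radius. The cocycle then descends to a $G$-cocycle on $\Gamma_n$, giving a $G$-cover $\Gamma_n'$ with $\Omega'\cong\pi_n^*\Gamma_n'$.

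This descent is the delicate point: one must check that the locally constant transition functions really are functions on $\Gamma_n$, meaning they are constant on the fibers of $\pi_n$ restricted to the overlaps. This holds once the overlaps are themselves pullbacks and $n$ is large, by a finite-cover compactness argument. Since $\Gamma_n'$ corresponds to a homomorphism $\phi:\pi_1(\Gamma_n)\to G$, defined up to conjugation, we get the required input. The image of $\phi\circ(\sigma_{n,m})_*$ decreases in $m$ and stabilizes. If it were a proper subgroup $H$, then $\Omega'$ would split into $[G:H]$ clopen pieces, contradicting connectedness. Hence after increasing $n$ the map is surjective.

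\emph{Matching the equivalence relations.} Suppose $h:\Omega^\phi\to\Omega^\psi$ is a homeomorphism lifting the identity. Since $h$ conjugates the $G$-actions up to an automorphism $\alpha$ of $G$, the same descent argument applied to $h$ shows that, at some finite level $m$, it comes from an isomorphism of covers $\Gamma_m^\phi\cong\Gamma_m^\psi$ over $\Gamma_m$. Such an isomorphism gives $\psi = \alpha\circ\phi$ up to conjugation, and inner conjugations are already absorbed in $\textnormal{Aut}(G)$. So $\phi$ and $\psi$ are equal in $\dlim \textnormal{Sur}/\textnormal{Aut}(G)$, and the parametrization is a bijection.
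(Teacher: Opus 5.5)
Your proposal is correct, and it takes a genuinely different route from the paper, mostly in the construction half.

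\textbf{How the paper argues.} It works concretely with tilings. It re-tiles $\Omega$ by a Voronoi tiling whose vertices are the occurrences of a patch $P_0$, and decorates those vertices with subscripts in $G$ via a return rule coming from $\phi_n$. Connectedness comes from a nested-patch and pigeonhole argument showing that every patch occurs with every subscript in a single tiling. Independence of the representative is shown by an explicit MLD map.

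\textbf{How you argue.} You pull back the principal $G$-bundle $\Gamma_n^\phi$ along $\pi_n$. Independence of the representative then follows from functoriality of pullbacks. Connectedness follows from the standard fact that an inverse limit of compact connected Hausdorff spaces is connected.

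\textbf{The classification half.} Here the two arguments are closer than they look. The paper's evenly covered cylinder set, with subscript changes read off along return paths, is a hands-on version of your descent of the cocycle to $\Gamma_n$. Its uniform-continuity argument in Step 4, where cylinder sets are mapped into cylinder sets, is the analogue of your descent of $h$.

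\textbf{What each route buys.} Your route is shorter and uses essentially nothing about tilings beyond the inverse-limit presentation by connected finite CW complexes. It exhibits the theorem as continuity of nonabelian \v{C}ech $H^1(\,\cdot\,;G)$ for finite $G$ under inverse limits, which the paper says is possible but deliberately avoids. The paper's route gives an explicit model of each cover as an FLC, minimal tiling space with subscripted tiles, from which $\Omega$ is locally derivable. That concreteness is what it exploits later: in Theorem~\ref{thm:substitution-covers}, in the explicit substitutions for covers of the Fibonacci tiling, and in the return-lattice spectral argument.

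\textbf{One step to write out.} Spell out the descent carefully. Small $\pi_n$-fibers alone do not make a locally constant $g_{ij}$ constant on fibers, because the fibers are Cantor sets. First shrink the cover to closed sets, so that the level sets of each $g_{ij}$ become pairwise disjoint compacta at positive distance. Then choose $n$, and a cover of $\Gamma_n$ by small sets, so that their $\pi_n$-preimages are finer than both that distance and a Lebesgue number. The transition functions are then constant on fibers, and they descend continuously because $\pi_n$ is a quotient map. The same argument handles your descent of $h$.
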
 

Note also that homomorphisms $\phi_n: \pi_1(\Gamma_n) \to G$ are closely related to finite-index
normal subgroups, insofar as the kernel of $\phi_n$ is a 
finite-index normal subgroup $N$ of $\pi_1(\Gamma_n)$. If $\phi_n$ is surjective, then 
$G \simeq \pi_1(\Gamma_n)/N$. However, it is possible for two surjective homomorphisms $\pi_1(\Gamma_n) \to G$ to
have the same kernel, if those homomorphisms are related by an automorphism of $G$. 
By looking at the quotient of $\dlim \textnormal{Sur}(\pi_1(\Gamma_n),G)$ by $\textnormal{Aut}(G)$, we are essentially studying 
a limit of $\ker(\phi_n)$ rather than a limit of $\phi_n$ itself. 

The proof of Theorem \ref{thm:covers} uses an explicit construction. Given a sequence $\Phi$ of surjections 
$\phi_n: \pi_1(\Gamma_n) \to G$ with $\phi_{n+1} = \phi_n \circ \sigma_\ast$, we construct a 
cover of $\Omega$. We then show that all (regular and finite) covers of $\Omega$ are equivalent to 
covers of this form. 
Finally, we show that two sets $\Phi = \{\phi_n\}$ and $\Phi' = \{{\phi}'_n\}$ of surjections give 
equivalent covers if and only if $\phi_n' = A \circ \phi_n$ for all sufficiently large $n$ and for some fixed
automorphism $A: G \to G$. 

Next we relate $\hat \pi_1(\Omega)$ and $\pi_1^\textnormal{et}(\Omega)$:
\begin{theorem}\label{thm:same} In all cases, $\hat \pi_1(\Omega)$ and $\pi_1^\textnormal{et}(\Omega)$ are 
isomorphic. If \(\Omega\) is generated by a substitution \(\sigma:\Gamma\rightarrow\Gamma\), both groups 
are isomorphic to \(\hat{\pi}_1^\sigma(\Gamma)\cong(\pi_1^\textnormal{et})^\sigma(\Gamma)\), where 
\(\hat{\pi}_1^\sigma(\Gamma)\) and \((\pi_1^\textnormal{et})^\sigma(\Gamma)\) are the 
\(\sigma\)-equivariant profinite/\'etale fundamental groups of \(\Gamma\).
\end{theorem}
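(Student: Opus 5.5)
The plan is to derive Theorem \ref{thm:same} from Theorem \ref{thm:covers}, taking the \emph{connected} finite regular covers as the pieces of the \'etale fundamental group. The inverse system defining $\pi_1^\textnormal{et}(\Omega)$ is indexed by pointed connected finite regular covers, ordered by factorization, with the deck groups as terms. Write $\Omega = \ilim \Gamma_n$. A connected regular $G$-cover corresponds, by Theorem \ref{thm:covers}, to a class $[\phi_n]$ of compatible surjections $\phi_n:\pi_1(\Gamma_n)\to G$, defined for $n$ large. Connectedness is exactly surjectivity. Fixing basepoints removes the $\textnormal{Aut}(G)$ ambiguity up to the choice of a point in the fiber.

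Each surjection $\phi_n$ has finite quotient, so it extends uniquely to a continuous surjection $\hat\phi_n:\hat\pi_1(\Gamma_n)\to G$. Compatibility $\phi_{n+1}=\phi_n\circ\sigma_\ast$ means these glue into a continuous surjection $\hat\phi:\hat\pi_1(\Omega)=\ilim\hat\pi_1(\Gamma_n)\to G$.

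Conversely, I would show that every continuous surjection $\psi:\hat\pi_1(\Omega)\to G$ onto a finite group arises this way. Its kernel is open, so by the definition of the inverse limit topology it contains the preimage of an open normal subgroup of some $\hat\pi_1(\Gamma_n)$. Hence $\psi$ factors through the image $H_n$ of $\hat\pi_1(\Omega)$ in $\hat\pi_1(\Gamma_n)$. The difficulty is that $H_n$ may be a proper subgroup, so $\psi$ need not extend to all of $\hat\pi_1(\Gamma_n)$. To handle this I would pass to larger $m$. Since everything is profinite and $G$ is finite, the images of $\hat\pi_1(\Gamma_m)$ in $\hat\pi_1(\Gamma_n)$ decrease to $H_n$. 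A compactness argument then shows that for $m$ large the map $\psi$ is defined on the image of $\hat\pi_1(\Gamma_m)$. Composing gives a compatible surjection $\pi_1(\Gamma_m)\to G$, and surjectivity for large $m$ follows because $\psi$ is surjective. I expect this step, together with checking that the tower of deck groups is cofinal and compatible, to be the main obstacle.

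With these pieces in place, the finite quotients of $\hat\pi_1(\Omega)$, i.e. its open normal subgroups, correspond bijectively and order-preservingly to isomorphism classes of pointed connected finite regular covers. A cover $\Omega''\to\Omega'\to\Omega$ corresponds to the inclusion of kernels, and the deck-group map corresponds to the quotient map. Since a profinite group is the inverse limit of its finite quotients, this gives
\[ \hat\pi_1(\Omega)\cong \ilim_{N} \hat\pi_1(\Omega)/N \cong \ilim \textnormal{Deck} = \pi_1^\textnormal{et}(\Omega). \]
For the substitution case, take $\Gamma_n=\Gamma$ for all $n$ with every bonding map equal to $\sigma$. Then $\hat\pi_1(\Omega)=\ilim(\hat\pi_1(\Gamma),\hat\sigma_\ast)$, which is by definition the $\sigma$-equivariant profinite group $\hat\pi_1^\sigma(\Gamma)$. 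The same argument, applied with $\sigma$-equivariant finite covers of $\Gamma$, identifies this with $(\pi_1^\textnormal{et})^\sigma(\Gamma)$.
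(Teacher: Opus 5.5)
Your proof is correct in outline, and it takes a genuinely different route from the paper.

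\textbf{How the two proofs differ.} The paper builds the map $\hat\pi_1(\Omega)\to\pi_1^{\textnormal{et}}(\Omega)$ element by element. It describes $\hat\pi_1(\Omega)$ through trees of paths of quotients $\pi_1(\Gamma_i)/N_i$ (Propositions~\ref{prop:stabilization} and~\ref{prop:elements-projlim}). It attaches a cover to each path (Proposition~\ref{prop:construct-covers}) and checks compatibility with maps between covers (Proposition~\ref{prop:subcovers}, Corollary~\ref{cor:induced-map-deck-transformation}). It then verifies injectivity and surjectivity by hand.

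You instead set up a Galois correspondence between open normal subgroups of $\hat\pi_1(\Omega)$ and connected finite regular covers. The isomorphism then follows formally, because a profinite group is the inverse limit of its quotients by open normal subgroups. This avoids the Bratteli machinery and gives injectivity and surjectivity for free. Your basepoints also make the \'etale inverse system genuinely well defined; without them, the transition maps $\textnormal{Deck}(X''/X)\to\textnormal{Deck}(X'/X)$ are only determined up to inner automorphism. In exchange, the paper's route gives an explicit picture of elements and of which inverse-limit structures realize a given cover.

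\textbf{The key lemma.} Your compactness step is the right one. In clean form: choose an open normal $U\trianglelefteq\hat\pi_1(\Gamma_n)$ whose preimage in $\hat\pi_1(\Omega)$ lies in $\ker\psi$. The images of $\hat\pi_1(\Gamma_m)$ in the finite group $\hat\pi_1(\Gamma_n)/U$ then stabilize at the image of $\hat\pi_1(\Omega)$. You need this same stabilization in two places you do not state:
\begin{itemize}
\item surjectivity of the glued map $\hat\phi$;
\item injectivity of $\dlim\textnormal{Sur}(\pi_1(\Gamma_n),G)/\textnormal{Aut}(G)\to\{\textnormal{open normal subgroups}\}$, which your ``bijectively'' requires.
\end{itemize}

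\textbf{What must be added beyond Theorem~\ref{thm:covers}.} You claim that a morphism $\Omega''\to\Omega'$ over $\Omega$ corresponds to $\ker\phi''_N\subseteq\ker\phi'_N$, with the induced deck map equal to the quotient map. This does not follow from the statement of Theorem~\ref{thm:covers}, which only classifies covers with a fixed group. To prove it:
\begin{itemize}
\item rerun the uniform-continuity argument of Step~4 for a map between covers with different groups, which gives the kernel inclusion;
\item for the converse, push subscripts through the quotient $G''\to G'$.
\end{itemize}
This is the role played in the paper by Proposition~\ref{prop:subcovers} and Corollary~\ref{cor:induced-map-deck-transformation}.

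\textbf{Substitution case.} Reading $\hat\pi_1^\sigma(\Gamma)$ as $\ilim(\hat\pi_1(\Gamma),\hat\sigma_\ast)$ makes part of the substitution clause true by definition. The paper's proof instead treats the substitution case as a restriction to periodic paths, meaning eventually $\sigma_\ast$-periodic finite-index normal subgroups. Matching that, and $(\pi_1^{\textnormal{et}})^\sigma(\Gamma)$, with your inverse limit uses the fact that every tail class has a periodic representative (Proposition~\ref{prop:elements-projlim}). Your ``same argument'' should cite this explicitly.
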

The proof involves taking the inverse limit, over larger and larger groups $G$, of the 
construction of Theorem \ref{thm:covers}. A limit of covers is an element of $\pi_1^\textnormal{et}(\Omega)$, 
while a limit over $G$ of the kernels of $\phi_n$ gives us $\hat \pi_1(\Omega)$. Theorem \ref{thm:same},
together with the manifest topological invariance of $\pi_1^\textnormal{et}$, makes it clear that $\hat \pi_1(\Omega)$
is a topological invariant, independent of how we write $\Omega$ as an inverse limit. 

Theorems \ref{thm:covers} and \ref{thm:same} can both be proved abstractly, 
considering inverse limits spaces in general and not just tiling spaces. Likewise, the topological invariance 
of $\hat \pi_1(\Omega)$ can be proved directly, without using Theorem \ref{thm:same}. 
However, for applications to tiling theory, we believe that a more concrete proof is preferable. 

\begin{theorem}\label{thm:substitution-covers} Let $\Omega$ be a substitution tiling space with substitution
$\sigma$ and let $p: \tilde \Omega \to \Omega$ be a (finite, regular) covering map. Then 
$\tilde \Omega$ is a substitution tiling space with a substitution $\tilde \sigma$ such that 
$p \circ \tilde \sigma = \sigma^n \circ p$ for some finite integer $n$. 
\end{theorem}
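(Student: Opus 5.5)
By Theorem \ref{thm:covers}, the cover $p:\tilde\Omega\to\Omega$ is equivalent to the cover built from a compatible sequence of surjections. Here $\Omega = \ilim(\Gamma,\sigma)$ is the Anderson--Putnam presentation, so every $\Gamma_n=\Gamma$ and every bonding map is $\sigma$. The sequence is $\phi_n:\pi_1(\Gamma)\to G$ with $\phi_{n+1}=\phi_n\circ\sigma_\ast$, defined for $n\ge n_0$.

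The plan is to realize the cover as an inverse limit of finite covers of $\Gamma$ and then find a power of $\sigma$ that lifts to a self-map of one fixed finite cover. Let $N_n=\ker\phi_n$, and let $\tilde\Gamma_n\to\Gamma$ be the finite regular cover with group $G$ corresponding to $N_n$. Since $\sigma_\ast^{-1}(N_n)=N_{n+1}$, the map $\sigma$ lifts to $\tilde\sigma_{n+1}:\tilde\Gamma_{n+1}\to\tilde\Gamma_n$, and this lift is $G$-equivariant. The explicit construction in Theorem \ref{thm:covers} gives $\tilde\Omega\cong\ilim(\tilde\Gamma_n,\tilde\sigma_n)$.

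The key step is a pigeonhole argument. The set $\textnormal{Sur}(\pi_1(\Gamma),G)$ is finite, because $\pi_1(\Gamma)$ is finitely generated ($\Gamma$ is a finite CW complex) and $G$ is finite. So the sequence $\phi_n$ (all living in the same set) has repeats: $\phi_{m+k}=\phi_m$ for some $m\ge n_0$ and $k\ge1$. The recursion $\phi_{n+1}=\phi_n\circ\sigma_\ast$ then forces $\phi_{n+k}=\phi_n$ for all $n\ge m$. In particular $\tilde\Gamma_{n+k}=\tilde\Gamma_n=:\tilde\Gamma$, and the composite
\[
\tilde\sigma := \tilde\sigma_{m+1}\circ\cdots\circ\tilde\sigma_{m+k}
\]
is a self-map of $\tilde\Gamma$ lifting $\sigma^k$. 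Cofinality gives
\[
\tilde\Omega\cong\ilim(\tilde\Gamma,\tilde\sigma) \quad\text{and}\quad p\circ\tilde\sigma=\sigma^k\circ p,
\]
where $\tilde\sigma$ now also denotes the induced map on $\tilde\Omega$. So the statement holds with $n=k$.

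The main obstacle is showing that $\ilim(\tilde\Gamma,\tilde\sigma)$ really is a substitution tiling space and not just an inverse limit of a branched manifold. The plan is as follows.
\begin{itemize}
\item Label the tiles of $\tilde\Gamma$: each cell of $\Gamma$ (an Anderson--Putnam collared tile) has $|G|$ preimages in $\tilde\Gamma$. Tag each tile type $t$ with the preimage $(t,g)$ it lifts to.
\item Define the substitution. Since $\tilde\sigma$ covers $\sigma^k$, it sends the tile $(t,g)$ to the lifted patch $\sigma^k(t)$, whose tiles carry labels determined by path lifting in $\tilde\Gamma$. This gives a substitution $\tilde\sigma$ on the labelled tiles, with the same expansion as $\sigma^k$.
\item Check that $\tilde\Gamma$ is (up to collaring) the Anderson--Putnam complex of this new substitution, so that the inverse limit is the hull of $\tilde\sigma$. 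Compatibility of labels along shared faces is exactly the covering-space structure. The labelled tilings inherit finite local complexity from $\Omega$.
\item Recognizability holds because $\tilde\sigma$ is a lift of the recognizable $\sigma^k$.
\end{itemize}
Primitivity, and hence repetitivity, may fail: $\tilde\Omega$ need not be connected. I would first handle this by noting that the connected case follows from connectedness of $\tilde\Omega$ via the surjectivity of $\phi_n$. In the disconnected case I would pass to a further power of $\tilde\sigma$ and restrict to a component.
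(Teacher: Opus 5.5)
Your argument is correct and shares the paper's core idea: finiteness of $\textnormal{Hom}(\pi_1(\Gamma),G)$ forces periodicity, and a period yields a lift of a power of $\sigma$. You execute it at a different level, though.

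The paper stays on $\tilde\Omega$ throughout. Since $\sigma$ is a homeomorphism of $\Omega$ (recognizability), each $\sigma^n\circ p$ is again a $G$-cover. Theorem \ref{thm:covers} leaves only finitely many of these up to equivalence, so $\tilde\sigma$ is just the homeomorphism witnessing $p\simeq\sigma^n\circ p$. That this makes $\tilde\Omega$ a substitution tiling space is asserted there rather than checked.

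You instead run the pigeonhole on the bonding data and obtain $\tilde\sigma$ by classical lifting on the finite complex $\tilde\Gamma$. This costs you the identification $\tilde\Omega\cong\ilim(\tilde\Gamma_n,\tilde\sigma_n)$. That identification is not actually part of the proof of Theorem \ref{thm:covers}; it is closer to Proposition \ref{prop:construct-covers}. It needs its own short verification: read off the subscript at the nearest vertex and lift the path to the origin. In exchange, you get an explicit labelled substitution on which FLC, injectivity (via $G$-equivariance plus recognizability of $\sigma$), and primitivity can genuinely be checked.

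Two small points.
\begin{enumerate}
\item The paper's periodicity up to equivalence allows $\phi_{m+k}=A\circ\phi_m$ with $A\in\textnormal{Aut}(G)$, which can shorten the period. For example, the $\Z/3\Z$ covers of Fibonacci need only $\sigma_{\textnormal{Fib}}^4$ rather than $\sigma_{\textnormal{Fib}}^8$. Since you only really use $\ker\phi_{m+k}=\ker\phi_m$, you can capture this too, with $\tilde\sigma$ then $A$-twisted equivariant.
\item Your hedge about a disconnected $\tilde\Omega$ is unnecessary. The covers here are connected because the $\phi_n$ are surjective. Primitivity of the labelled substitution then follows from the connectedness argument in Step 1 of the proof of Theorem \ref{thm:covers}: all labelled versions of a patch occur within bounded distance, and high-level supertiles contain arbitrarily large balls.
\end{enumerate}
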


That is, covers of substitution tiling spaces can't necessarily be constructed by modifying the original
substitution map, but they can always be constructed by modifying powers of the original substitution, with 
different covers requiring different powers. 

We illustrate these theorems with a case study of the space $\Omega_\textnormal{Fib}$ of 
Fibonacci tilings in one dimension. We show that 
\begin{theorem}\label{thm:Fib} $\pi_1^\textnormal{et}(\Omega_\textnormal{Fib})=\hat \pi_1(\Omega_\textnormal{Fib}) = \hat{\mathbb{F}}_2$,
where $\mathbb{F}_2$ is the free group on two generators. In particular, every finite 2-generator group 
is the group of deck transformations 
for some cover $\tilde \Omega \to \Omega_\textnormal{Fib}$. $\tilde \Omega$ is a substitution
tiling. $\tilde \Omega$ can also be obtained from a cut-and-project construction (i.e., $\tilde \Omega$ has pure point spectrum) if 
and only if the group of deck transformations is abelian. 
\end{theorem}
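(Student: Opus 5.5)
We split Theorem~\ref{thm:Fib} into its four assertions and handle them in order.

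\emph{Computing the groups.} The Fibonacci substitution $a \mapsto ab$, $b \mapsto a$ presents $\Omega_\textnormal{Fib}$ as $\ilim(\Gamma,\sigma)$. We take $\Gamma$ to be a (collared, if needed) Anderson--Putnam complex that is homotopy equivalent to a wedge of two circles, so $\pi_1(\Gamma)\cong \F_2=\langle a,b\rangle$. With a suitable choice of basepoint, the induced map is $\sigma_\ast(a)=ab$, $\sigma_\ast(b)=a$. This map is an automorphism of $\F_2$ with inverse $a\mapsto b$, $b\mapsto b^{-1}a$. An automorphism of $\F_2$ extends to an automorphism of $\hat\F_2$. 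Hence every bonding map in $\ilim \hat\pi_1(\Gamma_n)$ is an isomorphism, and the limit is $\hat\F_2$. Theorem~\ref{thm:same} then gives $\pi_1^\textnormal{et}(\Omega_\textnormal{Fib})\cong\hat\F_2$.

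If the collared complex is not a wedge of circles, we use the fact that the Fibonacci space is an inverse limit of the circle wedge under a map homotopic to $\sigma$ (the substitution is proper after one iteration, or we use G\"ahler collaring). Topological invariance, from Theorem~\ref{thm:same}, lets us choose this convenient presentation. This is the step to check carefully: the point is that the bonding map on $\pi_1$ is an isomorphism up to conjugation, and conjugation does not affect the inverse limit up to isomorphism.

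\emph{Realizing all 2-generator groups.} Let $G$ be a finite group generated by two elements $g,h$. Define $\phi_0:\F_2\to G$ by $a\mapsto g$, $b\mapsto h$; it is surjective. Set $\phi_n=\phi_0\circ\sigma_\ast^{\,n}$. Each $\phi_n$ is surjective because $\sigma_\ast$ is an automorphism, and the sequence is compatible by construction. This gives an element of $\dlim \textnormal{Sur}(\pi_1(\Gamma_n),G)$, and Theorem~\ref{thm:covers} produces a regular cover with deck group $G$. Theorem~\ref{thm:substitution-covers} then says this cover is a substitution tiling space.

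\emph{Pure point spectrum iff abelian.} For the abelian direction, we use that the abelianization of the cover corresponds to covers induced from the maximal equicontinuous factor. The cover is obtained by pulling back $\R\to\R/\Lambda$ where $\Lambda$ comes from the character group. Concretely, a homomorphism to an abelian $G$ is an element of $\check H^1(\Omega,G)$. For Fibonacci this cohomology group is $\dlim(\Z^2,\sigma^T)\otimes G$, so the cover is a finite skew product over the torus $\R^2/\Z^2$ hull. Such a cover is again a model set, with the window lifted to the internal space times $G$. Hence its spectrum is pure point.

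For the converse, suppose the cover has pure point spectrum. Being a regular model set, its maximal equicontinuous factor is a group rotation, and the cover is an almost 1-1 extension of it. The deck group commutes with the $\R$-action, so it descends to the factor. We expect the deck group to act there by translations, which commute with one another. An injective action would then force $G$ to be abelian. Proving injectivity is the main obstacle. We would argue that a deck transformation acting trivially on the maximal equicontinuous factor must fix the fibers over generic points, which are singletons. It would then be the identity on a dense set, hence the identity, contradicting freeness of the deck action.
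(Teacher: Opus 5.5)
Your first three assertions follow the paper exactly. The paper takes $\pi_1$ of the figure-eight approximant to be $\F_2$ and notes that $\sigma_\ast$ is an automorphism, so $\hat\pi_1(\Omega_{\textnormal{Fib}})=\hat\F_2$. It then transfers this to $\pi_1^{\textnormal{et}}$ via Theorem~\ref{thm:same}, and gets the covers and their substitution structure from Theorems~\ref{thm:covers} and~\ref{thm:substitution-covers}. One slip: no power of $\sigma_{\textnormal{Fib}}$ is proper, since $\sigma^n(a)$ and $\sigma^n(b)$ always end in different letters. The collaring alternative you mention is what actually works; the paper uses Barge--Diamond collaring.

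For the spectral dichotomy your route is genuinely different.

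\textbf{The paper's route.} It asserts that the maximal equicontinuous factor $\tilde M$ of the cover is a finite cover of the torus $M_{\textnormal{Fib}}$, with deck group a quotient of $G$. Fibre counting then gives ``pure point iff $\tilde M$ is a full $G$-cover''. Non-abelian $G$ is excluded because torus covers are abelian. For abelian $G$, the paper shows the return lattice is exactly $L=\ker(\Z^2\to G)$, so $\tilde M=\R^2/L$.

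\textbf{Your non-abelian direction.} It is intrinsic, and your ``expectation'' is a standard fact.
\begin{itemize}
\item Deck transformations commute with the flow, so by the universal property of the maximal equicontinuous factor they descend to homeomorphisms of the minimal rotation $\tilde M$ that commute with it.
\item Such homeomorphisms are translations, which gives a homomorphism $G\to\tilde M$.
\item This homomorphism is injective. An element acting trivially on $\tilde M$ would fix a point of a singleton fibre, or measure-theoretically would equal the identity $\mu$-a.e. Nontrivial deck transformations have no fixed points.
\end{itemize}
What this buys:
\begin{itemize}
\item It does not need the fact that $\tilde M\to M_{\textnormal{Fib}}$ is a covering map, which the paper states without argument, and it never uses that $M_{\textnormal{Fib}}$ is a torus.
\item It proves more generally that any finite regular cover which is an almost one-to-one extension of its maximal equicontinuous factor has deck group embedding in that factor, hence abelian.
\item Its only external input is that pure point spectrum makes $\tilde\pi$ a measure isomorphism, the eigenfunctions here being continuous. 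The paper's dichotomy uses the same input, so your detour through ``regular model set'' is more than you need.
\end{itemize}

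\textbf{Your abelian direction.} You realize $\tilde\Omega$ as a regular model set for the lattice $\{(x_\parallel,(x_\perp,\bar\phi_0(x))):x\in\Z^2\}$ in $\R\times(\R\times G)$, where $\bar\phi_0:\Z^2\to G$ is the map induced by $\phi_0$. This proves the cut-and-project wording of the statement literally. To make it rigorous, state two things explicitly:
\begin{itemize}
\item For abelian $G$, the subscript at the vertex with lattice coordinate $x$ is $\bar\phi_0(x)$, up to a constant.
\item The internal projection is dense, because $\bar\phi_0$ is onto and the finite-index sublattice $\ker\bar\phi_0$ projects densely to the internal line.
\end{itemize}
This plays the role of the paper's return-lattice computation.
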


In Section \ref{sec:review} we review the definitions of tilings and tiling spaces
and see how one tiling space can be a cover of another. We also review the definitions of 
profinite completions and how homomorphisms of groups induce homomorphisms of their 
profinite completions. 
In Section \ref{sec:covers} we show how to construct covers of tiling spaces and 
thereby prove Theorems \ref{thm:covers} and \ref{thm:substitution-covers}. 
In Section \ref{sec:same} we give precise 
definitions for $\hat \pi_1$ and $\pi_1^\textnormal{et}$ and prove Theorem \ref{thm:same}. 
In Section \ref{sec:Fib} we treat the Fibonacci tiling as a case study
and prove Theorem \ref{thm:Fib}. In Section \ref{sec:examples} we consider the \'etale fundamental 
group of other tiling spaces, with calculations for the more involved examples done in Sage 
using \cite{liu26}, and an independent implementation in GAP by the first author. Finally, in an
Appendix we sketch the connection of our constructions to gauge theory.

\section{Foundations}\label{sec:review}

\subsection{Tiles and tilings}

A {\bf tile} is a closed topological ball in $\R^d$, possibly with a label to distinguish the tile
from other tiles of the same shape and size. A {\bf tiling} of $\R^d$ is a collection of tiles,
intersecting only on their boundaries, whose union is all of $\R^d$. The group $\R^d$ acts on tiles by 
translation and on tilings by simultaneously translating all of the tiles. Two tiles are 
{\bf equivalent} if one is the translate of the other. Equivalence classes of tiles are called
{\bf prototiles}. A {\bf patch} is a finite collection of tiles in a tiling. A tiling has 
{\bf finite local complexity}, or FLC, if there are only finitely many equivalence classes of patches
up to any given diameter. It is straightforward to prove that a tiling has FLC if and only if it has
finitely many prototiles and finitely many ways for two adjacent tiles to touch. 

We say that two tilings are close if they agree on a large ball around the origin up to a small translation.
This topology can be expressed in terms of a metric, but the details of the metric are not 
important for this paper. A {\bf tiling space} is a non-empty set of tilings that is invariant under
translation and closed in the tiling topology. The smallest tiling space containing a specific tiling $T$,
often denoted $\Omega_T$, is called the (continuous) {\bf hull} of $T$. $\Omega_T$ consists of all tilings
$T'$ with the property that all patches in $T'$ can be found somewhere in $T$. $T$ is said to be
{\bf repetitive} if $\Omega_T$ is a minimal dynamical system, that is if 
$\Omega_{T'}=\Omega_T$ for every $T' \in \Omega_T$. Equivalently, $T$ is repetitive if for every 
patch $P \subset T$, there is a radius $R(P)$ such that at least one copy of $P$ appears in every ball
of radius $R(P)$. 

A tiling $T_2$ is said to be {\bf locally derivable} from a tiling $T_1$ if there is a factor map
$\phi: \Omega_{T_1} \to \Omega_{T_2}$ and a radius $R$ such that, whenever two tilings $T, T' \in 
\Omega_{T_1}$ agree on a ball of radius $R$ around a point $x \in \R^d$, $\phi(T)$ and $\phi(T')$
agree exactly on a ball of radius 1 around $x$. If in addition $T_1$ is locally derivable from $T_2$,
we say that $T_1$ and $T_2$ are {\bf mutually locally derivable}, or MLD. We then call $\phi$ an
{\bf MLD equivalence}. Note that this is a stronger condition than simply being topologically conjugate.  

All FLC tilings are MLD equivalent to tilings where the prototiles are polygons (or polyhedra) meeting
full-face to full-face. For simplicity, we will assume throughout that our tilings are of this form, but 
everything we say applies equally well to arbitrary FLC tilings. 

Given an FLC tiling $T$, we can build a CW complex $\Gamma$, called the {\bf Anderson--Putnam complex}, 
by starting with one copy of each 
prototile and identifying points on the boundaries where two prototiles meet. There is a natural map
from $\Omega_T$ to $\Gamma$, taking each tiling to the equivalence class of the origin. In this way,
a point in $\Gamma$ describes what a tiling looks like at the origin. 

Given a tiling $T$, we can obtain another tiling $T_1$ that is MLD to $T$ by adding a label to each 
tile $t$ that
describes the pattern of $T$ in a neighborhood around $t$. This is called {\bf collaring}. A point
in $\Gamma(\Omega_{T_1})$ describes a small neighborhood of the origin in $T_1$, or equivalently a 
larger neighborhood of the origin in $T$, and there is a natural ``forgetful'' map from $\Gamma(\Omega_{T_1})$
to $\Gamma(\Omega_T)$. We can collar out to greater and greater distances, obtaining a sequence of 
MLD tilings $T_i$ and corresponding CW complexes $\Gamma_i=\Gamma(\Omega_{T_i})$ with forgetful maps
$\Gamma_{i} \to \Gamma_{i-1}$. The inverse limit of this sequence is homeomorphic to $\Omega_T$. 

There are many versions of this construction. When $T$ comes from a substitution, Anderson and Putnam's
construction \cite{MR1631708} involves all of the $\Gamma_i$'s being the same space (up to scale) and
the forgetful map is essentially the substitution itself. Other constructions, by G\"ahler
\cite{gaehler02}, Barge--Diamond \cite{MR2383524} and others, are more general (and more complicated). In all cases,
a point in $\Gamma_i$ for $i$ sufficiently large determines a tiling in a given neighborhood of the 
origin and 
a sufficiently large patch around the origin of a tiling determines the corresponding point in any 
given $\Gamma_i$. 

Since $\Gamma_i$ is a CW complex, the algebraic topology of $\Gamma_i$ is relatively simple. 
There is a (Galois) correspondence between covering spaces 
of $\Gamma_i$ and subgroups of $\pi_1(\Gamma_i)$. Different cohomology theories (simplicial, cellular, 
singular, \v Cech) all yield the same groups. If $A$ is any abelian group, $\textnormal{Hom}(\pi_1(\Gamma_i),A)=
H^1(\Gamma_i,A)$. 
The subtlety is understanding what happens when we take inverse limits. 

\subsection{Profinite and \'etale fundamental groups}

\begin{definition}
A group \(G\) is \emph{profinite} if it is an inverse limit of finite groups. It is assigned the usual inverse limit topology.

Given a group \(G\) with the discrete topology, we construct its \emph{profinite completion} as
follows. If $M$ and $N$ are normal subgroups of $G$ (denoted $M, N \trianglelefteq G$) of finite index, and if 
$M$ is a subgroup of $N$ (denoted $M\leq N$), then there are natural projections $\pi_{N \trianglelefteq G}: 
G \to G/N$ and $\pi_{M \trianglelefteq G}: G \to G/M$. There is also a projection  $G/M \to G/N$
that we denote $\pi_{N \trianglelefteq G} \pi_{M \trianglelefteq G}^{-1}$. (Since the kernel of 
$\pi_{M \trianglelefteq G}$ (namely $M$ itself) is contained in the kernel of $\pi_{N \trianglelefteq G}$, 
all points in $\pi_{M \trianglelefteq G}^{-1}$ of an element of $G/M$ are mapped to the same point in $G/N$, 
so this is well defined.) We define
\[
\hat{G}=\varprojlim_{\substack{N\trianglelefteq G\\ [G:N]<\infty}}G/N,
\]
That is, an element of $\hat{G}$ is a collection 
$(g_N)$ of elements of $G/N$, where $N$ ranges over all finite-index normal subgroups of $G$, 
subject to the condition
that if $M\leq N$, then $g_N = \pi_{N\trianglelefteq G}\circ\pi_{M\trianglelefteq G}^{-1}(g_M)$. 
\end{definition}

The following proposition is standard. See, for example, \cite{MR2599132}.
\begin{prop}[Universal Property of Profinite Completion]
\label{prop:universal-property-profinite-completion}
Given \(G\) discrete and \(H\) profinite, a homomorphism \(\phi:G\rightarrow H\)
induces a continuous homomorphism \(\hat{\phi}:\hat{G}\rightarrow H\) so that
\[
\begin{tikzcd}
G\arrow[r,"\eta"]\arrow[rd,"\phi",swap]&\hat{G}\arrow[d,"\hat{\phi}"]\\
&H
\end{tikzcd}
\]
commutes, where \(\eta:G\rightarrow\hat{G}\) is the natural homomorphism induced by the collection 
of projections $\pi_{N\trianglelefteq G}$.

In particular, if the codomain is itself the profinite completion of a discrete group \(H\), i.e. 
\(\eta\circ\phi:G\rightarrow H\rightarrow\hat{H}\), then \(\hat{\phi}:\hat{G}\rightarrow\hat{H}\) is induced by \(G/
\phi^{-1}(N)\rightarrow H/N\). That is, for \(g=(g_M)_{\substack{M\trianglelefteq G\\ [G:M]<\infty}}\in\hat{G}\), 
\(\hat{\phi}(g)=h=(h_N)_{\substack{N\trianglelefteq H\\ [H:N]<\infty}}\) with \(h_N=\pi_{N\trianglelefteq 
H}\circ\phi\circ\pi_{\phi^{-1}(N)\trianglelefteq G}^{-1}(g_{\phi^{-1}(N)})\).
\end{prop}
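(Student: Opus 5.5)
We construct $\hat\phi$ coordinate by coordinate. Write $H=\varprojlim_{i\in I} H_i$ with each $H_i$ finite, bonding maps $p_{ij}:H_i\to H_j$ for $i\ge j$, and projections $p_i:H\to H_i$. For each $i$, set $\phi_i=p_i\circ\phi:G\to H_i$. Its kernel $N_i=\ker\phi_i$ is a normal subgroup of $G$ of index at most $|H_i|<\infty$. So $\phi_i$ factors as $\bar\phi_i\circ\pi_{N_i\trianglelefteq G}$, where $\bar\phi_i:G/N_i\to H_i$ is the induced injection. For $g=(g_M)\in\hat G$ we define
\[ \hat\phi(g)=\bigl(\bar\phi_i(g_{N_i})\bigr)_{i\in I}. \]
This is the only sensible choice, since commutativity with $\eta$ and continuity force $p_i\circ\hat\phi$ to be a continuous map $\hat G\to H_i$ agreeing with $\phi_i$ on the dense image $\eta(G)$.

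The main work is checking that this tuple lies in $H$, i.e.\ is compatible with the bonding maps. For $i\ge j$ we have $\phi_j=p_{ij}\circ\phi_i$, so $N_i\le N_j$. Then
\[ p_{ij}\bigl(\bar\phi_i(g_{N_i})\bigr)=\bar\phi_j\bigl(\pi_{N_j\trianglelefteq G}\pi_{N_i\trianglelefteq G}^{-1}(g_{N_i})\bigr)=\bar\phi_j(g_{N_j}). \]
The first equality holds because both sides agree on any lift of $g_{N_i}$ to $G$. The second is the compatibility condition defining $\hat G$. This step is the main point to handle carefully, though it is routine.

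The remaining properties are quick to verify.
\begin{itemize}
\item \emph{Homomorphism.} Each coordinate is a composition of homomorphisms, namely the projection $\hat G\to G/N_i$ followed by $\bar\phi_i$.
\item \emph{Continuity.} Each coordinate factors through a projection $\hat G\to G/N_i$, which is continuous in the inverse limit topology, and a map of finite discrete groups. By the universal property of the product topology on $H$, the map $\hat\phi$ is continuous.
\item \emph{Commutativity.} For $x\in G$, the element $\eta(x)$ has coordinates $xN$. Hence $p_i\hat\phi(\eta(x))=\bar\phi_i(xN_i)=\phi_i(x)=p_i\phi(x)$ for every $i$, so $\hat\phi\circ\eta=\phi$.
\item \emph{Uniqueness (a remark).} $\eta(G)$ is dense in $\hat G$ and $H$ is Hausdorff, so any continuous map with $\hat\phi\circ\eta=\phi$ is unique.
\end{itemize}

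For the special case $H=\hat{H}_0$ with $H_0$ discrete, the index set is the set of finite-index normal subgroups $N\trianglelefteq H_0$, with $H_N=H_0/N$. The composite $G\to H_0\to H_0/N$ has kernel $\phi^{-1}(N)$, and the induced map $G/\phi^{-1}(N)\to H_0/N$ is $x\phi^{-1}(N)\mapsto\phi(x)N$. This is exactly $\pi_{N\trianglelefteq H_0}\circ\phi\circ\pi_{\phi^{-1}(N)\trianglelefteq G}^{-1}$, which is well defined since the preimage is a coset of $\phi^{-1}(N)$ and is mapped into a single coset of $N$. Substituting into the general formula gives the stated $h_N$.
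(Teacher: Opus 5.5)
Your proof is correct and is the standard argument. The paper gives no proof of its own and defers to \cite{MR2599132}, where $\hat\phi$ is built exactly as you do: factor each $p_i\circ\phi$ through the finite quotient $G/\ker(p_i\circ\phi)$, check compatibility with the bonding maps, assemble via the universal property of the inverse limit, and get uniqueness from the density of $\eta(G)$. Your special case $H=\hat H_0$, with $H_N=H_0/N$ and $\ker(\pi_{N\trianglelefteq H_0}\circ\phi)=\phi^{-1}(N)$, correctly recovers the stated formula for $h_N$.
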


If $X$ is a connected and locally simply-connected topological space, the \emph{profinite fundamental group} 
of $X$ is usually 
defined to be the profinite completion of the ordinary fundamental group. However, this definition does not work 
for tiling spaces or other inverse limit spaces, as tiling spaces are not locally connected, 
much less locally simply-connected. 
Moreover, the ordinary fundamental group of a 
tiling space (built from paths in a single path component) is trivial, so there is nothing to study. Instead, 
we use a definition adapted to inverse limit spaces built from the induced map between profinite 
completions of discrete groups.

\begin{definition}
  For \(\{\Gamma_i\}_{i\in\mathbb{N}}\), each connected and locally simply-connected,
  \(\sigma_{i+1}:\Gamma_{i+1}\rightarrow\Gamma_i\) the \emph{profinite fundamental group}
  of the inverse limit space \(X=\varprojlim_{\sigma_i}\Gamma_i\) is
\[
\hat{\pi}_1(X)=\varprojlim_{{{}\hat{\sigma}_i}_\ast} \hat{\pi}_1(\Gamma_i)
\]
where the maps ${{}\hat{\sigma}_i}_\ast$ are constructed as in Proposition 
\ref{prop:universal-property-profinite-completion}.
\end{definition}

A priori, this definition appears to depend on the description of $X$ as an inverse limit and not just on the topology
of $X$. For now, we will assume that a specific inverse limit structure has been supplied. This ambiguity 
will be resolved in Section \ref{sec:same}, where we prove that different inverse limit structures yield 
isomorphic groups and where we give an explicit description of the elements of $\hat \pi_1(X)$.

The definition of the \'etale fundamental group, in contrast, is very succinct and has no restrictions on the underlying topological space.
\begin{definition}
Given a connected topological space \(X\), its \emph{\'etale fundamental group} is
\[
\pi_1^\textnormal{et}(X)=\varprojlim_{\substack{X'\textnormal{ finite, regular}\\\textnormal{ cover of }X}}\textnormal{Deck}(X'/X),
\]
where Deck$(X'/X)$ is the group of deck transformations of the cover $X' \to X$ and 
where the inverse limit is over induced maps of the form
\begin{align*}
\begin{tikzcd}[ampersand replacement=\&]
X''\arrow[rr]\arrow[rd]\&\&X'\arrow[ld]\\
\&X\&
\end{tikzcd}
&\begin{tikzcd}[ampersand replacement=\&]\arrow[r,squiggly]\&\vphantom{a}\end{tikzcd}
\begin{tikzcd}[ampersand replacement=\&]
\textnormal{Deck}(X''/X)\arrow[d]\\
\textnormal{Deck}(X'/X)
\end{tikzcd}
\end{align*}
with all maps on the left finite, regular covers.
\end{definition}
The identity map $X \to X$ is a (trivial) regular cover, so the inverse limit is nonempty and the group exists. 
We postpone the construction of the induced maps for tiling spaces to Section \ref{sec:same}, after the discussion 
of the existence of nontrivial finite, regular covers.

\begin{remark}
  The usage of the \'etale fundamental group is a continuation of the theme of an \'etale topology of tiling spaces
  from the second author in \cite{MR4653337} and \cite{liutalk24}, which produces a natural map from
  \(d\)- and \((d-1)\)-dimensional \'etale cohomology groups to K-theory by dualizing \'etale opens and partial
  homeomorphisms on them, and is essentially an isomorphism in low dimensions.
\end{remark}

\section{Constructing covers}\label{sec:covers}

We prove Theorem \ref{thm:covers} in stages. We begin with a well-known example, the double 
cover of the 1-dimensional period doubling tiling space by the Thue--Morse tiling
space. By reversing the operation, we show how to construct infinitely many different covers of 
the period doubling space. We then generalize the procedure to arbitrary 1-dimensional tilings, thereby proving
Theorem \ref{thm:covers} for 1-dimensional tilings. In Subsection \ref{subsec:dimension}, 
we go beyond one dimension and prove Theorem \ref{thm:covers} for all tiling spaces. We then examine
the case of substitution tilings and prove Theorem \ref{thm:substitution-covers}. A reader looking
for the shortest possible proofs can skip ahead to Subsection \ref{subsec:dimension}, but we recommend the 
development of Subsections \ref{subsec:TM} and \ref{subsec:1D} for most readers. 

\subsection{The cover of period doubling by Thue--Morse} \label{subsec:TM}

Thue--Morse tilings are based on a 2-letter alphabet $\{0,1\}$ and the substitution
\[ \sigma_\textnormal{TM}(0) = 01, \qquad \sigma_\textnormal{TM}(1)=10.  \]
A bi-infinite word in the letters 0 and 1 is admissible if every finite sub-word is found in 
$\sigma_\textnormal{TM}^n(0)$ or $\sigma_\textnormal{TM}^n(1)$ for some natural number $n$. We can convert words to tilings by imagining two
kinds of tiles, each of length 1, carrying labels 0 and 1. Let $\Omega_\textnormal{TM}$ be the space of such tilings. 

There is an obvious involution of $\Omega_\textnormal{TM}$, acting without fixed points, that swaps the labels on the tiles,
converting 0's to 1's and vice versa. Let $\Omega_\textnormal{PD}$ be the quotient of $\Omega_\textnormal{TM}$ by this involution, making 
$\Omega_\textnormal{TM}$ a double cover of $\Omega_\textnormal{PD}$. We will show that $\Omega_\textnormal{PD}$ is itself a substitution tiling
space, with alphabet $\{a,b\}$ and tiles of unit length, and with the substitution
\[ \sigma_\textnormal{PD}(a)=ab, \qquad \sigma_\textnormal{PD}(b)=aa. \]
$\sigma_\textnormal{PD}$ is called the {\em period doubling} substitution and the elements of $\Omega_\textnormal{PD}$ are said to be
{\em period doubling tilings}. 

We first collar the Thue--Morse tiles, giving a letter in a Thue--Morse
tiling the subscript $a$ if it is different from the succeeding letter and $b$ if it is the same as the succeeding letter. 
For instance, the sequence 
\[ \ldots 0110100110010110\ldots \]
becomes 
\[ \ldots 0_a1_b1_a0_a1_a0_b0_a1_b1_a0_b0_a1_a0_a1_b1_a0_?\ldots, \]
where the subscript on the final 0 depends on the continuation of the sequence. 
Adding these subscripts does not change the space at all, as there is a continuous bijection between bi-infinite 
sequences of 0's and 1's and resulting sequences of $0_a$'s, $0_b$'s, $1_a$'s and $1_b$'s. 
A sequence with the alphabet $\{0_a, 0_b, 1_a, 1_b\}$ is completely determined by the sequence of 
$a$'s and $b$'s, plus a single 0 or 1. The involution $0 \leftrightarrow 1$ preserves the $a$'s and $b$'s
while flipping that last bit of information, so the quotient $\Omega_\textnormal{PD} := \Omega_\textnormal{TM}/(\Z/2\Z)$ is 
the set of tilings corresponding to allowed sequences of $a$'s and $b$'s, ignoring the 0's and 1's altogether. 
Acting on the alphabet $\{0_a, 0_b, 1_a, 1_b\}$, the Thue--Morse substitution is 
\[ \sigma_\textnormal{TM}(0_a) = 0_a 1_b, \qquad \sigma_\textnormal{TM}(0_b) = 0_a 1_a,
  \qquad \sigma_\textnormal{TM}(1_a) = 1_a 0_b, \qquad \sigma_\textnormal{TM}(1_b) = 1_a 0_a.\]
Once we erase the 0's and 1's and concentrate on the $a$'s and $b$'s, this becomes $\sigma_\textnormal{PD}$. 

We now reverse the procedure, starting with the base space and building a cover. 
In order to cover $\Omega_\textnormal{PD}$ with group $G$ (in this case $\Z/2\Z$), we
assign subscripts in $G$ to every letter of our period doubling sequence. We then apply some 
{\em following rules}:
\begin{itemize}
\item After every $a$ tile, the subscript increases by 1 (mod 2). 
\item After every $b$ tile, the subscript stays the same. 
\end{itemize} 
Thanks to the following rule, we can apply a subscript to a single tile however we wish, but then all other
subscripts are forced. That is, there are two subscripted words for every (finite or infinite) 
un-subscripted word. For instance,
the word $abaaabababaaaba$ corresponds to 
\[ a_0 b_1 a_1 a_0 a_1 b_0 a_0 b_1 a_1 b_0 a_0 a_1 a_0 b_1 a_1 
\quad \hbox{ or } \quad a_1 b_0 a_0 a_1 a_0 b_1 a_1 b_0 a_0 b_1 a_1 a_0 a_1 b_0 a_0 \]
The resulting space of tilings is collared Thue--Morse, only with the roles of the letters and 
the subscripts reversed. 
The group $\Z/2\Z$ of deck transformations acts by adding a constant (0 or 1) to each subscript. 
To obtain uncollared Thue--Morse, just erase the $a$'s and $b$'s and keep the 0's and 1's. 

The lesson is that we don't need to start with a tiling space, such as Thue--Morse, with a free 
$G$-action. We can start with a base space and build a cover with $G$-symmetry by considering letters
with subscripts in $G$ and applying appropriate following rules. 

The same technique can be used to construct other covering spaces of $\Omega_\textnormal{PD}$. For instance, to get
a triple cover we could take subscripts in $\Z/3\Z$ and a following rule such as 
``add 1 after every $a$ and 2 after every $b$''.  In fact, we can generate several different inequivalent 
triple covers of $\Omega_\textnormal{PD}$ this way. As we will soon see, variations on this technique will generate
all covers of 1-dimensional tiling spaces. 

\subsection{Covers of 1-dimensional tiling spaces}\label{subsec:1D}

Let $\Omega$ be an arbitrary 1-dimensional, repetitive, FLC tiling space and let $G$ be a finite group.
We will use a 
``subscripts and following rule'' construction to associate a regular cover of $\Omega$ 
with deck group $G$ to each element of 
$\dlim \textnormal{Sur}(\pi_1(\Gamma_n), G)$. We will then show that every covering space of $\Omega$ 
is equivalent to a cover constructed in this way. After modding out by appropriate equivalences, 
we obtain a complete classification of $G$-covers of $\Omega$. That is, we will prove Theorem \ref{thm:covers}
for 1-dimensional tilings. 

We begin by writing $\Omega = \ilim(\Gamma_n)$ as an inverse limit, 
with each $\Gamma_n$ describing a tiling out to a distance that increases 
with $n$. That is, for each radius $r$ there exists an $n$ such that each point in $\Gamma_n$ determines a 
tiling out to distance $r$, and for each $n$ there is an $R$ such that all tilings that agree on $[-R,R]$
correspond to the same point in $\Gamma_n$. 
We also consider a simplest approximant $\Gamma_0$ consisting of the disjoint union of one copy of each tile in the
alphabet with all vertices identified. 

Any $\Phi \in \dlim \textnormal{Sur}(\pi_1(\Gamma_n), G)$ can be represented by a homomorphism $\phi_n: \pi_1(\Gamma_n) \to G$
for some finite $n$. By collaring out to sufficient distance and then constructing $\Gamma_0$ for the collared 
space, we obtain an approximant with at least as much information as $\Gamma_n$. We can therefore assume, 
without loss of generality, that $n=0$. That is, $\phi_0$ exists and associates an element of $G$ to each letter
in the alphabet $\AAA$ of $\Omega$, at least after collaring sufficiently many times. 

Our tiling space $\tilde \Omega$ will have the alphabet 
$\tilde \AAA := \AAA\times G$, namely letters in $\AAA$ with subscripts in $G$. The allowed sequences of letters in 
$\AAA$ are exactly the same as in $\Omega$, and the corresponding subscripts are required to 
satisfy a following rule:
\begin{itemize}
\item If a letter $\ell$ has subscript $g$, then the following letter must have subscript $g \, \phi_0(\ell)$.
That is, $\phi_0(\ell)$ acts on the right to convert the subscript of $\ell$ into the subscript of the 
next letter. 
\item Our following rule can also be generalized to apply to words and not just letters. 
If $v = \ell_1\ldots\ell_k$ is a word for $\Omega$, 
we say that the subscript for a version of $v$ in $\tilde \Omega$ is the subscript of $\ell_1$. We define 
$\phi_0(v)$ to be $\phi_0(\ell_1)\cdots \phi_0(\ell_k)$. If a version of the concatenated word $vw$ appears in 
$\tilde \Omega$, then the subscript of $w$ must be the subscript of $v$ times $\phi_0(v)$.  We say that $v$ is 
the {\em prefix} of $w$ in the combined word $vw$, and indeed in any word that starts with $vw$, while $w$ is
the {\em suffix} of $v$ in $vw$ or any word that ends in $vw$. The subscript of a suffix is the subscript of 
the prefix times $\phi_0$ of the prefix. 
\end{itemize}
As with our covers of $\Omega_\textnormal{PD}$, an entire tiling in $\tilde \Omega$ is determined by the
corresponding tiling in $\Omega$ and by a single subscript in $G$.  
This makes $\tilde \Omega$ a $G$-cover of $\Omega$, with the group $G$ acting by left multiplication on all of 
the subscripts. 

What remains is showing that $\tilde \Omega$ is a \emph{connected} $G$-cover of $\Omega$. This 
involves showing that each tiling in $\tilde \Omega$ contains all 
versions $P_0 \times G$ of each patch $P_0$ that appears in tilings in $\Omega$. 

\begin{lem}\label{lem:1Dexists} 
For each $g \in G$ there exists a return word $w_g$ of $P_0$ such that 
$\phi_0(w_g)=g$. 
\end{lem}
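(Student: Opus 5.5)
The plan is to turn the statement into a claim about a subgroup of $G$, and then use surjectivity of $\phi_n$ on $\pi_1(\Gamma_n)$ at all levels to show that this subgroup is all of $G$. Here a return word of $P_0$ is a word $w$, appearing in some tiling of $\Omega$, that begins with $P_0$ and is followed by another occurrence of $P_0$; intermediate occurrences are allowed. Set
\[ S := \{ g \in G : \phi_0(w) = g \hbox{ for some return word } w \hbox{ of } P_0\}. \]
The statement says exactly that $S = G$.

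\emph{Step 1: $S$ is a subgroup, identified dynamically.} Let $\tilde\Omega$ be the space built from $\phi_0$ by the following rule. It is closed and translation invariant, and $G$ acts on it freely by left multiplication of subscripts. Fix $\tilde T \in \tilde\Omega$ and let $Y$ be its orbit closure. By repetitivity of $\Omega$, together with finiteness of $G$ and of the fibres, $Y$ is a minimal set. For each $h\in G$, the set $hY$ is also minimal, so either $hY=Y$ or $hY\cap Y=\emptyset$.

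Let $S' = \{h : hY = Y\}$; this is a subgroup of $G$. I claim $S = S'$. Suppose $w$ is a return word with $\phi_0(w)=g$. Put subscript $1$ on its first letter and place the word in some tiling. The following rule then forces the later copy of $P_0$ to carry subscript $g$. So $(P_0,1)$ and $(P_0,g)=g\cdot(P_0,1)$ occur in the same tiling. By minimality both patterns then occur in every element of $Y$ and of $gY$, so $Y\cap gY\neq\emptyset$, hence $gY=Y$ and $g\in S'$.

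Conversely, if $hY=Y$, then $(P_0,1)$ and $(P_0,h)$ both occur in tilings of $Y$. By minimality they occur in a single tiling, with $(P_0,1)$ to the left. The word running from that occurrence of $(P_0,1)$ up to the occurrence of $(P_0,h)$ is then a return word $w$ with $\phi_0(w)=h$. If $(P_0,h)$ only appears to the left of $(P_0,1)$, first apply the same argument to $h^{-1}$, and then use that $S'$ is a group. Hence $S = S'$, and in particular $S$ is a subgroup.

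\emph{Step 2: if $S\neq G$, then $\phi$ is not surjective.} This is the main obstacle. Suppose $S\ne G$. By Step 1, every tiling of $Y$ uses subscripts from only one left coset $cS$ near any occurrence of $P_0$, and by minimality $P_0$ occurs in every tiling with bounded gaps.

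Choose $m \geq n$ so large that every point of $\Gamma_m$ determines the tiling on a window containing an occurrence of $P_0$. Take a basepoint $x\in\Gamma_m$ at the start of such an occurrence. I would then argue that every loop at $x$ in $\Gamma_m$ has $\phi_m$-image in $S$, after conjugating by a fixed element. A loop in the 1-complex $\Gamma_m$ is a product of edges and their inverses. Each edge is a collared tile whose label already sees a $P_0$-occurrence nearby. So the $\phi_0$-value of a single edge equals a quotient of values of two return-type words lying inside one legal patch. That quotient is a product of elements of $S$ and their inverses, because two occurrences of $P_0$ inside one legal patch are joined by a return word or the inverse of one.

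Backtracking edges contribute inverses, and $S$ is a group, so the whole loop maps into $S$. This gives $\phi_m(\pi_1(\Gamma_m,x)) \subseteq S \subsetneq G$. But $\phi_m = \phi_n\circ(\sigma\cdots\sigma)_\ast$ represents the same element of $\dlim \textnormal{Sur}(\pi_1(\Gamma_n), G)$, so it is surjective, a contradiction.

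The delicate point is checking that each edge's contribution genuinely lies in $S$ rather than in a conjugate of $S$. For this I would fix the basepoint at $P_0$ and attach to every vertex of $\Gamma_m$ a canonical legal word leading back to a $P_0$ occurrence, which the large collar makes available. If only a conjugate $cSc^{-1}$ can be reached, that is still a proper subgroup, and the contradiction with surjectivity is unchanged. Hence $S=G$, and each $g\in G$ is $\phi_0$ of a single return word $w_g$ of $P_0$.
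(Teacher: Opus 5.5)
Your argument breaks at Step 1, in the direction $S\subseteq S'$, and Step 2 depends on it. The inference ``both patterns occur in every element of $Y$ and of $gY$, so $Y\cap gY\neq\emptyset$'' is a non sequitur. Two disjoint minimal sets can share a finite labelled pattern. Also, the tiling in which you placed $w$ lies in some $cY$, not necessarily in $Y$.

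Step 1 never uses surjectivity of the $\phi_m$, so it would have to hold for every $\phi_0$, and it does not. In period doubling take $G=\Z/4\Z$, $\phi_0(a)=\phi_0(b)=1$ and $P_0=aa$. Every tiling is a concatenation of $\sigma_\textnormal{PD}^2(a)=abaa$ and $\sigma_\textnormal{PD}^2(b)=abab$. Inside these blocks, $aa$ starts only at positions $2,3$ mod $4$. Hence the set of $\phi_0$-images of return words of $aa$ is $\{0,1,3\}$, which is not a subgroup. Meanwhile, by recognizability, ``subscript minus position in the supertile'' is a continuous invariant. So $\tilde\Omega$ is four copies of $\Omega_\textnormal{PD}$ and $S'$ is trivial.

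Without Step 1, your Step 2 gauge argument (which is sound) only shows that the images of return words \emph{generate} $G$. That is just the paper's opening observation. The real content of the lemma is upgrading ``generate'' to ``every $g$ is the image of a single return word.'' This is nontrivial because a product of images of return words is the image of a return word only when those words occur consecutively. The paper does this with nested patches $P_0\subset P_1\subset\cdots$. A pigeonhole argument fixes one generating set $\{s_i\}$ realised by return words $w_{k,i}$ of every $P_k$. The element $g=s_{i_1}\cdots s_{i_m}$ is then realised by the concatenation $w_{m-1,i_1}w_{m-2,i_2}\cdots w_{0,i_m}$.

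Your dynamical route can be repaired, and it would then be a genuinely different proof from the paper's.
\begin{enumerate}
\item Keep the direction you did prove correctly, $S'\subseteq S$, with $Y$ chosen to contain $(P_0,1)$. It then suffices to show $S'=G$.
\item The finitely many sets $cY$ are disjoint, closed and cover $\tilde\Omega$, hence clopen. So there is a radius $R$ such that the labelled patch of radius $R$ decides which $cY$ a tiling lies in.
\item Take a patch $P$ beginning with $P_0$ and containing a ball of radius $R$, and let $Y$ be the component containing $(P,1)$. For this $P$ your Step 1 argument becomes valid and gives $S(P)=S'$, a genuine subgroup.
\item Run your Step 2 gauge argument with $P$ in place of $P_0$, with $m$ large enough that every point of $\Gamma_m$ sees a copy of $P$. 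It shows that $\phi_m(\pi_1(\Gamma_m))$ lies in a conjugate of $S'$, so surjectivity of $\phi_m$ forces $S'=G$, and hence $S(P_0)\supseteq S'=G$.
\end{enumerate}
The missing ingredient is exactly this passage to patches large enough to detect the minimal component, together with monotonicity $S(P)\subseteq S(P_0)$.
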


\begin{proof} 
Let $|P_0|$ be the length of $P_0$ and consider an approximant $\Gamma_{m_0}$ such that each point in 
$\Gamma_{m_0}$ determines a patch of length at least $|P_0|$ near the origin,
with some point in $\Gamma_{m_0}$ determining $P_0$ itself.
Since $\phi_{n_0}$ is surjective, there exist return words for $P_0$ whose images under 
$\phi_{n_0}$ generate 
$G$. Note that if we have a patch containing a collection of copies of $P_0$, and if the return words 
between copies of $P_0$ generate $G$, then the return words starting at the leftmost copy of $P_0$ also
generate $G$. 

Pick a generating set of return words. By the repetitivity of $\Omega$, any sufficiently long
word contains all of these return words. Let $P_1$, of length $|P_1|$, be such a 
long word that begins with $P_0$ and consider 
an approximant $\Gamma_{n_1}$ whose points determine patches of size at least $|P_1|$.
Since $\Phi_{n_1}$ is surjective, there are return words of $P_1$ that generate $G$. 
Pick a word $P_2$ that begins with $P_1$ and is big enough to contain a generating set of these. 
Repeat to recursively define an infinite sequence of $P_k$'s and corresponding $\Gamma_{n_k}$'s.

For each $P_i$, consider the image of all of the return words of $P_{i-1}$ that start at the beginning 
of $P_i$, a subset of $G$ that generates $G$. 
Since $G$ is a finite group, there are only finitely many such subsets. 
By the pigeonhole principle,
one of these subsets must repeat infinitely often. By telescoping, we can then assume that all of the $P_i$'s have 
return words whose images under $\Phi$ are exactly the same. That is, there is a fixed set $\{s_1, \ldots, s_m\}$
of generators of $G$ and return words $w_{k,i}$ of $P_k$ such that $\phi_k(w_{k,i})=s_i$. This also implies that
$\phi_0(w_{k,i})=s_i$. 

Now let $g=s_{i_1} s_{i_2} \dots s_{i_m}$ be an arbitrary element of $G$. 
The word $P_m$ begins with $P_{m-1}$, which begins with $P_{m-2}$, etc., so $P_m$ begins with $P_0$.
By assumption, $w_{m-1,i_1}$ is a prefix for $P_{m-1}$ in $P_m$, $w_{m-2, i_2}$ is a prefix for $P_{m-2}$ in 
$P_{m-1}$, and so on. Let $w_g=w_{m-1,i_1}w_{m-2,i_2}\cdots w_{0,i_m}$. $w_g$ is a prefix for $P_0$ in $P_m$. 
But the empty
word is also a prefix for $P_0$ in $P_m$, so $w_g$ is our desired return word for $P_0$.

\end{proof}

Thus the cover constructed from a particular map $\phi_0$ using $\Gamma_0$ and 
sufficiently collared tiles,
or equivalently from a map $\phi_n$ without collaring, yields a connected regular $G$-cover of $\Omega$. 

\begin{lem}\label{lem:1Dsame}
If two maps $\phi_n$ and $\phi'_m$ represent the same element of 
$\dlim \textnormal{Sur}(\pi_1(\Gamma_n),G)$, then the resulting covering spaces $\tilde \Omega$ and 
$\tilde \Omega'$ are MLD, with the MLD equivalence covering the identity map on $\Omega$. 
\end{lem}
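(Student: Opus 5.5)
Two maps represent the same element of the direct limit exactly when they agree after being pulled back to a common approximant. So the plan is to reduce to the case $m=n$ with $\phi_n=\phi'_n$ on $\Gamma_n$ for some large $n$. There we only need to show that the cover built from a map $\phi_k$ is MLD to the cover built from its pullback $\phi_{k+1}=\phi_k\circ\sigma_\ast$, with the equivalence covering the identity on $\Omega$. Chaining these equivalences from $n$ up to the common level, on both sides, then gives the lemma.

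\textbf{Step 1: reduction.} By definition of $\dlim$, there is an $N\ge m,n$ with
\[ \phi_n\circ(\sigma_{n+1}\circ\cdots\circ\sigma_N)_\ast = \phi'_m\circ(\sigma_{m+1}\circ\cdots\circ\sigma_N)_\ast. \]
It therefore suffices to compare the cover built from $\phi_n$ with the cover built from its pullback $\psi=\phi_n\circ(\sigma_{n+1}\circ\cdots\circ\sigma_N)_\ast$ to $\Gamma_N$.

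\textbf{Step 2: describe both covers at the same letter level.} The construction first collars to get an alphabet $\AAA$ whose $\Gamma_0$ refines $\Gamma_n$, and then records $\phi_0(\ell)\in G$ for each letter. Collaring further, to an alphabet $\AAA'$ refining $\Gamma_N$, each $\AAA'$-letter $\ell'$ determines an $\AAA$-letter $\ell$, and the induced following rule is $\phi'_0(\ell')=\phi_0(\ell)$. Here I need a convention for how an edge of $\Gamma_N$ maps to a path in $\Gamma_n$ through the forgetful map. The key point is that this path is the image of the tile at the origin. Consequently the value of $\psi$ on a letter (more precisely on a loop) equals $\phi_n$ of the image loop. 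Loops in $\Gamma_N$ built from consecutive letters map to the corresponding loops in $\Gamma_n$, so products of $\phi$-values along any word agree under both descriptions.

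\textbf{Step 3: build the MLD map.} Send a subscripted tiling in the $\phi_n$-cover to the tiling in the $\psi$-cover with the same underlying $\Omega$-tiling, where each tile keeps its subscript. Both following rules prescribe the same increment between consecutive tiles, since their values on each tile agree (perhaps up to a fixed basepoint-path conjugation $h\mapsto h$ that is locally determined). The image is therefore a legal tiling. The map is local, since it only needs the collar radius $R$ to read off the refined letter. It is invertible by forgetting the extra collaring, it commutes with the left $G$-action, and it covers the identity on $\Omega$.

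\textbf{Main obstacle.} The delicate step is basepoints. $\phi_n$ is a homomorphism on $\pi_1$, so assigning group elements to individual edges requires choosing a maximal tree or basepoint paths. Different choices change the edge labels by a ``gauge transformation'' $\phi(\ell)\mapsto h_{s(\ell)}^{-1}\phi(\ell)h_{t(\ell)}$, where $s(\ell)$ and $t(\ell)$ are the start and end vertices of the edge $\ell$. To handle this I will show that such a gauge change yields an MLD cover via the local relabeling $g\mapsto g\,h_{v}$ on a tile starting at vertex $v$. This relabeling is well defined, $G$-equivariant under left multiplication, and covers the identity. Once gauge changes are known to be harmless, the comparison in Step 2 holds exactly up to gauge, and the lemma follows.
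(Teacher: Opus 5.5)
Your proposal is correct, but it takes a different route from the paper. The paper never compares the two following rules letter by letter. It fixes $N$ with $\phi_N=\phi'_N$ and keeps the subscript of the first letter of every occurrence of the patch $P_N$ determined by the base point of $\Gamma_N$. It then fills in every other subscript using the following rule of $\tilde\Omega'$, starting from the previous occurrence of $P_N$. Consistency needs only that the two rules agree on return words of $P_N$ (i.e.\ on loops at the base point), and locality comes from repetitivity (bounded gaps between copies of $P_N$).

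You instead show that pulling $\phi_n$ back to $\Gamma_N$ changes the letter-level rule only by a vertex-indexed gauge $\phi(\ell)\mapsto h_{s(\ell)}^{-1}\phi(\ell)h_{t(\ell)}$. You then undo it with the relabeling $g\mapsto g\,h_{s(\ell)}$. This does carry one rule to the other, since $(g\,h_{s(\ell)})^{-1}g\,\phi(\ell)\,h_{t(\ell)}=h_{s(\ell)}^{-1}\phi(\ell)h_{t(\ell)}$.

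The two maps are close relatives. The paper's map is also a right multiplication by a locally determined element, namely $\phi(u)^{-1}\phi'(u)$ with $u$ the word from the last $P_N$ to the tile. But the two routes buy different things.

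\begin{itemize}
\item Yours makes explicit the maximal-tree/basepoint choices that the paper uses silently when it passes from $\phi_n$ to letter values $\phi_0(\ell)$. The paper's assertion that letter products along return words of $P_N$ agree presumes compatible choices.
\item Yours does not use repetitivity, and its radius is just the radius needed to read off the refined letter.
\item The price is bookkeeping. You must track how a tile maps under $\Gamma_N\to\Gamma_n$. For substitution-type bonding maps a tile traces only part of a supertile edge, so cells must be subdivided before letters receive group elements.
\item The paper's pin-and-interpolate uses only loop data and carries over verbatim to Step 2 of the higher-dimensional case.
\end{itemize}

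Finally, your Step 3 (``each tile keeps its subscript'') is literally true only after compatible gauges have been fixed. The actual equivalence is the relabeling in your last paragraph.
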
 

\begin{proof} The map $\phi_n$ induces maps $\phi_{n+1}, \phi_{n+2}$, etc., and likewise for 
$\phi'_m$. If $\phi_n$ and $\phi'_m$ represent the same element of 
$\dlim \textnormal{Sur}(\pi_1(\Gamma_n),G)$, then there exists an $N$ such that $\phi_N=\phi'_N$. 
This means that $\phi'_m(w)=\phi_n(w)$ for each return word $w$ of  
the patch $P_N$ described by the base point of $\Gamma_N$. We construct an MLD equivalence
$\psi: \tilde \Omega \to \tilde \Omega'$ as follows. If $T \in \tilde \Omega$, then 
\begin{itemize}
\item $\psi(T)$ has tiles in the same locations as $T$ with the same letter labels, but potentially with 
different subscripts. 
\item The first letter of each occurrence of $P_N$ in $\psi(T)$ has the same subscript as in $T$. This is 
consistent with the following rule for $\tilde \Omega'$, since $\phi'_N=\phi_N$. 
\item The subscripts for all other letters in $\psi(T)$ are computed using the following 
rules for $\tilde \Omega'$, starting at the beginning of the previous occurrence of $P_N$. 
\item Since the patch $P_N$ appears in $\Omega$ with bounded gaps, this interpolation is a local operation,
making $\psi$ a local derivation. 
\item The inverse map $\psi^{-1}$ is similar, only taking tilings in $\tilde \Omega'$, 
preserving the subscripts of all $P_N$'s and interpolating using the following rules for $\tilde \Omega$.
\end{itemize}
\end{proof}

Thanks to Lemma \ref{lem:1Dsame}, we can speak of the unique covering space associated to an element of 
$\dlim \textnormal{Sur}(\pi_1(\Gamma_n), G)$. We next show that all finite regular covers of $\Omega$ are of this form. 

\begin{lem}\label{lem:1Dsurjective} Every finite regular cover of $\Omega$ is equivalent to a cover generated by 
an element of $\dlim \textnormal{Sur}(\pi_1(\Gamma_n), G)$.
\end{lem}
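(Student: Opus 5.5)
Let $p:\tilde\Omega\to\Omega$ be a finite regular cover with deck group $G$, and let $\tilde\Omega$ be connected. The plan is to recover, from the local structure of $p$, a consistent assignment of subscripts in $G$ to tiles, and then to read off a following rule $\phi_n$.

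First, trivialize the cover over cylinder sets. Since $p$ is a finite covering map and $\Omega$ is compact, there is an $R$ such that, for every patch $P$ of radius $R$ around the origin, the preimage of the cylinder set $U_P=\{T\in\Omega: T \text{ agrees with } P \text{ on } [-R,R] \text{ up to a small translation}\}$ is evenly covered. This works because $\Omega$ has a basis of such clopen-in-the-transversal sets, a finite cover is trivial over small enough such sets, and only finitely many patches are needed by FLC. Choose a $\Gamma_n$ whose points determine patches of radius at least $R$, and collar so that we may take $n=0$, with each letter $\ell\in\AAA$ encoding such a patch. For each letter $\ell$, the preimage of the set of tilings whose origin tile is labelled $\ell$ is a disjoint union of $|G|$ sheets, permuted simply transitively by $G$. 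Fix once and for all a base sheet for each letter, identifying the sheets with $G$ via $g\mapsto g\cdot(\text{base sheet})$.

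Next, define the subscripts and the following rule. Take a point $\tilde T\in\tilde\Omega$ and slide the origin along $\R$ using the lifted translation action. Lifting translations is possible because $p$ is a covering and translation paths in $\Omega$ lift uniquely; the lift commutes with deck transformations. Each tile of $T=p(\tilde T)$ then gets a subscript in $G$: the sheet containing $\tilde T$ translated so that the origin sits in that tile. When the origin crosses from a tile $\ell$ into the next tile $\ell'$, the sheet over $\ell$ and the sheet over $\ell'$ overlap over the set of tilings whose origin is near the common vertex. That set determines the two-letter word $\ell\ell'$, so by the choice of $R$ the transition depends only on the collared letters. Deck transformations act on the left, which forces the transition to be right multiplication by some $\phi_0(\ell,\ell')\in G$. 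After one more collaring, which absorbs the dependence on $\ell'$, this is a function $\phi_0:\AAA\to G$, i.e.\ a homomorphism $\pi_1(\Gamma_0)\to G$, with $\Gamma_0$ the wedge of circles. The map $\tilde T\mapsto(\text{letters},\text{subscripts})$ is then a homeomorphism onto the cover built from $\phi_0$ before Lemma~\ref{lem:1Dexists}. It is continuous and local by construction, $G$-equivariant, and covers the identity on $\Omega$. It is a bijection because both spaces are $|G|$-sheeted over $\Omega$ and the map is fibrewise equivariant.

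The main obstacle is surjectivity: we must show that $\phi_0$, and hence every $\phi_m$ with $m\ge 0$, is surjective, so that we really obtain an element of $\dlim\textnormal{Sur}(\pi_1(\Gamma_n),G)$. Here I would use connectedness of $\tilde\Omega$. Suppose $\phi_m(\pi_1(\Gamma_m))=H_m$ for each $m$; these images decrease in $m$ and so stabilize to a subgroup $H\le G$. Then the set of points of $\tilde\Omega$ whose subscript at a chosen occurrence of the base patch of $\Gamma_m$ lies in a fixed coset $gH$ is invariant under translation. It is also closed and open, since the condition is local and preserved by the following rule along return words. If $H\ne G$, this splits $\tilde\Omega$ into $[G:H]>1$ disjoint clopen pieces, contradicting connectedness. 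Hence $H=G$, all sufficiently late $\phi_m$ are surjective, and the cover is equivalent to the one generated by the corresponding element of $\dlim\textnormal{Sur}(\pi_1(\Gamma_n),G)$.
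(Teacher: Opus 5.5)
Your argument is essentially the paper's. You trivialize $p$ over small cylinder sets and read off the change of sheet as right multiplication by an element of $G$, since deck transformations act on the left. You move from tile to tile along the lifted flow, where the paper moves from one occurrence of $P$ to the next along return words, re-encoding $T$ as $T'$. You then build the subscript-and-following-rule cover from the resulting $\phi$ and identify it with $\tilde\Omega$ by a local, $G$-equivariant map covering the identity.

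Your connectedness argument for surjectivity is a genuine addition. The coset of the subscripts at occurrences of the base patch of $\Gamma_m$ gives a continuous, translation-invariant, $G$-equivariant map $\tilde\Omega\to G/H$, and connectedness then forces $H=G$. The paper's proof only says ``this defines a homomorphism'' and never checks that the $\phi_m$ actually lie in $\textnormal{Sur}(\pi_1(\Gamma_m),G)$, which the statement requires.

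One step needs tightening. The set of tilings whose origin tile is $\ell$ is a Cantor set times an interval, so a trivialization over it is not canonical and can be reshuffled on clopen pieces of the transversal. With an arbitrarily chosen base sheet, the transition across the vertex between $\ell$ and $\ell'$ is only a locally constant function on the transversal of $\ell\ell'$, and need not depend on $\ell\ell'$ alone. ``By the choice of $R$'' does not fix this. Instead, choose the base sheet over a cylinder of small diameter, defining sheets by closeness of lifts, and extend it across the tile by the lifted flow. Then use the uniform fact behind the paper's parenthetical remark in Step 3: lifted translates of two tilings that agree on a radius-$R$ neighbourhood of the path, and that start close, stay close. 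With that choice the transition is a genuine function of the collared letters, and the rest of your argument goes through.
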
 

\begin{proof} 
Suppose that $\tilde \Omega$ is a 
connected regular cover of $\Omega$ with
a finite group $G$ of deck transformations and with covering map $p: \tilde \Omega \to \Omega$.
We will construct a surjective map $\phi_n: \pi_1(\Gamma_n) \to G$, where $\Gamma_n$ is an approximant of
$\Omega$. We will then show that the covering space corresponding to $\phi_n$ (which we denote 
$\tilde \Omega_\phi$) is equivalent to $\tilde \Omega$. 

By the definition of a regular covering space, every sufficiently small open set $U$ is evenly-covered, meaning that
$p^{-1}(U)$ is the disjoint union of $|G|$ sets, each mapped homeomorphically to $U$ by $p$, 
with the group $G$ permuting these sets transitively via deck transformations. Now consider an approximant
$\Gamma_n$ such that a point in $\Gamma_n$ corresponds to a patch $P$, such that $U$ contains a 
cylinder set $C$ consisting of all tilings in $\Omega$ that have $P$ at the origin, up
to a small translation. $C$ itself is evenly-covered. Pick one of the preimages and label it $C_e$, where
$e$ is the identity in $G$. The image of $C_e$ by the deck transformation indexed by $g \in G$ is then
labeled $C_g$.

Let $\tilde T \in \tilde \Omega$ be a tiling and let $T = p(\tilde T)$. The return words of $P$
in $T$ correspond to loops in $\Gamma_n$, each of which defines an element of $g$ relating the labels of 
the preimages of the two copies of $P$. This defines a homomorphism $\phi_n: \pi_1(\Gamma_n) \to G$.
We construct a tiling $T'$, MLD to $T$, whose vertices correspond to the locations of $P$ in $T$ and whose edges are
labeled by the paths in $\Gamma_n$ from one vertex to the next. That is, $\Gamma_0$ of the tiling space 
$\Omega_{T'}$ defined by $T'$ is $\Gamma_n$ of $\Omega$. We then build the covering space $\tilde \Omega_\Phi$ 
of $\Omega_{T'}$ (and thus of $\Omega$) using subscripts in $G$ and the following rules defined by $\phi_n$. 

To see that $\tilde \Omega$ and $\tilde \Omega_\Phi$ are equivalent, note that a point in either 
covering space determines
a tiling in $\Omega$, corresponding to one of $|G|$ possible tilings in the other cover. We must show that,
in either direction, local information determines 
which tiling it is. 
First consider tilings in $\tilde \Omega$ or $\tilde \Omega_\Phi$ that project to tilings in 
$\Omega$ with $P$ at the origin. In $\tilde \Omega$, these are indexed by the cylinder set $C_g$ that they lie in, 
while in $\tilde \Omega_{\Phi}$ the tiles themselves carry subscripts in $G$. 
By construction, the labels are the same for
$C_e$ and the transitions are the same, so the labels are the same at all preimages. 
Since $\Omega$ is 
repetitive, there are bounded gaps between these occurrences, so ``look at the nearest occurrence of $P$
to figure out the subscript'' is a local, and therefore continuous, operation. 
\end{proof}

\begin{remark} We have not proven that $\tilde \Omega$ and $\tilde \Omega_\Phi$ are MLD, just that
they are equivalent topologically, being related by a homeomorphism that covers the identity map on $\Omega$. 
While the base tiling space $\Omega$ is always locally derivable from the 
covers constructed with our ``subscripts and following rule'' procedure, local derivability is not part of 
the definition of a covering space. 
If we apply an asymptotically negligible shape change
to a cover $\tilde \Omega_\Phi$, we obtain an equivalent covering space $\tilde \Omega$ such that $\Omega$ is 
not locally derivable from $\tilde \Omega$ and such that $\tilde \Omega$ and $\tilde \Omega_\Phi$ are not MLD. 

However, now that we have shown that all covering spaces are equivalent to covers obtained from subscripts and
following rules, we can restrict our attention to covers where the base space {\em is} locally derivable from the 
cover, and indeed where corresponding tiles have the same sizes in both spaces. 
\end{remark}

\begin{lem}\label{lem:1Dauto}  Let $\Phi$ and $\Phi'$ be elements of $\dlim \textnormal{Sur}(\pi_1(\Gamma_n), G)$, represented by maps 
$\phi_n$ and $\phi'_m$. The resulting covering 
spaces of $\Omega$ are equivalent if and only if $\Phi' = A \circ \Phi$ (that is, if $\phi'_N = A \circ \phi_N$
for all sufficiently large $N$) for some automorphism $A: G \to G$. 
\end{lem}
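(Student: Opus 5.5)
The plan is to prove the two directions separately. For both, I will use the normal form from Lemma \ref{lem:1Dsame} and Lemma \ref{lem:1Dsurjective}. After passing to a common approximant $\Gamma_N$ (collaring further if needed), both $\phi_N$ and $\phi'_N$ are defined on $\pi_1(\Gamma_N)$. Each cover is then the ``subscripts and following rule'' space $\tilde\Omega_\Phi$, resp.\ $\tilde\Omega_{\Phi'}$, built over the MLD tiling $T'$ whose tiles are the return words of the patch $P_N$.

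For the ``if'' direction, suppose $\phi'_N = A\circ\phi_N$. I define $\psi:\tilde\Omega_\Phi\to\tilde\Omega_{\Phi'}$ by keeping every letter and replacing each subscript $g$ by $A(g)$. The following rule is preserved: if a letter $\ell$ with subscript $g$ is followed by subscript $g\,\phi_N(\ell)$, then after applying $A$ the second subscript is $A(g)\,A(\phi_N(\ell)) = A(g)\,\phi'_N(\ell)$. So $\psi$ lands in $\tilde\Omega_{\Phi'}$. It is a local, invertible operation (its inverse uses $A^{-1}$), hence a homeomorphism commuting with the projections to $\Omega$, and thus an equivalence of covers. I note that $\psi$ intertwines the deck actions via $A$, since $\psi(hT)=A(h)\psi(T)$. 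This is exactly why equivalence of covers, which need not respect the identification of the deck groups with $G$, only sees $\Phi$ modulo $\textnormal{Aut}(G)$.

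For the ``only if'' direction, let $\psi:\tilde\Omega_\Phi\to\tilde\Omega_{\Phi'}$ be a homeomorphism covering the identity on $\Omega$. First, $\psi$ conjugates deck groups: for $h\in G$, the map $\psi h\psi^{-1}$ covers the identity, so it is a deck transformation of $\tilde\Omega_{\Phi'}$. This gives an isomorphism $A:G\to G$ with $\psi(hT)=A(h)\psi(T)$, where $G$ acts by left multiplication on subscripts.

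Second, I show that $\psi$ is eventually ``constant on subscripts''. I use continuity and compactness: $\psi$ is uniformly continuous, so the subscript of $\psi(\tilde T)$ at the origin is determined by $\tilde T$ on some ball $[-R,R]$. Choose $M\ge N$ large enough that the cylinder sets for $\Gamma_M$ are evenly covered and each point of $\Gamma_M$ determines the tiling out to distance $R$. Then, on each lift $C_g$ of the $P_M$-cylinder set, the subscript of $\psi$ at the start of $P_M$ is a locally constant function, which I expect to be determined by $g$ alone. Call it $\beta(g)$. Equivariance then gives $\beta(g)=A(g)\,c$ for some fixed $c\in G$.

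Third, I compare the following rules. Take a return word $w$ of $P_M$ from one occurrence to the next. In $\tilde\Omega_\Phi$ the subscript changes by right multiplication by $\phi_M(w)$, and in $\tilde\Omega_{\Phi'}$ by $\phi'_M(w)$. Applying $\beta$ to both occurrences gives
\[ A(g\,\phi_M(w))\,c = A(g)\,c\,\phi'_M(w), \]
so $\phi'_M(w) = c^{-1}A(\phi_M(w))\,c$. Composing $A$ with the inner automorphism by $c$ yields an automorphism $A'$ with $\phi'_M = A'\circ\phi_M$. Since return words of $P_M$ generate $\pi_1(\Gamma_M)$ (at least after collaring so that $\Gamma_M$ is built from return words, as in Lemma \ref{lem:1Dsurjective}), this holds on all of $\pi_1(\Gamma_M)$, and hence for all larger indices.

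The main obstacle is the second step: making rigorous that the induced subscript depends only on $g$ and not on the rest of the tiling. That is, the connected pieces $C_g$ may each need to be refined by a larger $M$, because $\psi$ is only locally constant. I would handle this by enlarging $M$ so that $P_M$ contains the full $R$-neighborhood, so that on each $C_g$ the subscript function is constant. If that is awkward, the fallback is to note that $\beta$ could a priori depend on the position within the patch. In that case I would fix the base point of $\Gamma_M$ and argue via uniform continuity on the compact set $C_e$.
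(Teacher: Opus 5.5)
Your proposal is correct, and its skeleton matches the paper's proof. The ``if'' direction is the same relabeling of subscripts by $A$. For ``only if'', both arguments use uniform continuity to show that each lifted cylinder set $C_g$ of a sufficiently long patch lands in a single lifted cylinder set $C'_{g'}$, and then compare the following rules along return words.

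The difference is in how the automorphism is extracted. The paper records only that a return word $w$ with $\phi_N(w)=e$ also satisfies $\phi'_N(w)=e$. It then deduces $\ker\phi_N=\ker\phi'_N$ and uses the fact that two surjections onto $G$ with equal kernels differ by an automorphism. You instead get $A$ from $\psi h\psi^{-1}=A(h)$, write the cylinder map as $\beta(g)=A(g)c$, and derive $\phi'_M(\gamma)=c^{-1}A(\phi_M(\gamma))\,c$. This shows that the automorphism in the lemma is the deck-group isomorphism induced by $\psi$, twisted by an inner automorphism.

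Your version compares two homomorphisms on a generating set, so passing from return words to all of $\pi_1(\Gamma_M)$ is immediate. The paper's kernel statement, as written, only tests individual return words that already lie in the kernel. Extending it to products of return words needs exactly your relation $\beta(g\,\phi_M(w))=\beta(g)\,\phi'_M(w)$. That relation alone already makes $x\mapsto\beta(e)^{-1}\beta(x)$ an automorphism, so your deck-conjugation step is convenient rather than essential. The paper's version is shorter, and it phrases the result in terms of kernels, which is the viewpoint it later uses to match $\textnormal{Sur}(\pi_1(\Gamma_n),G)/\textnormal{Aut}(G)$ with quotients $\pi_1(\Gamma_n)/N$.

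One point both arguments leave implicit, and which you should state, is translation-equivariance. Comparing the subscripts of $\psi(\tilde T)$ at two occurrences of $P_M$ uses $\psi(\tilde T-x)=\psi(\tilde T)-x$. This holds because $t\mapsto\psi(\tilde T-t)$ and $t\mapsto\psi(\tilde T)-t$ are lifts of the same path in $\Omega$ with the same starting point, so uniqueness of path lifting forces them to agree.
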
 

\begin{proof} One direction is easy. If $\Phi' = A \circ \Phi$, then applying $A$ to all the subscripts of 
tilings in 
$\tilde \Omega_{\Phi}$ gives a homeomorphism to $\tilde \Omega_{\Phi'}$ that covers the identity on $\Omega$. 

For the other direction, suppose that we have a homeomorphism $h: \tilde \Omega_{\Phi} \to \tilde \Omega_{\Phi'}$
that covers the identity. Since $h$ is uniformly continuous, for any sufficiently long word $P$ and any
fixed subscript $g$ the image $h(C_g)$ of a cylinder set $C_g$ must lie in a single cylinder set $C'_{g'}$ of $P$
in $\tilde \Omega_{\Phi'}$. In particular, if $w$ is a return word of $P$ with $\phi_n(w)=e$, then 
$\phi'_n(w)$ must also be $e$. That is, the kernel of $\phi_N$ for $N$ large lies in the kernel of $\phi'_N$. A similar argument shows that the kernel of $\phi'_N$ lies in
the kernel of $\phi_N$. Since $\phi_N$ and $\phi'_N$ are both 
surjections onto $G$ and have the same kernel, they must be related by an automorphism of $G$. 
That is, $\Phi' = A \circ \Phi$. 
\end{proof}

Taken together, Lemmas \ref{lem:1Dexists}, \ref{lem:1Dsame}, \ref{lem:1Dsurjective} and \ref{lem:1Dauto} prove
Theorem \ref{thm:covers} for 1-dimensional tiling spaces. By Lemma \ref{lem:1Dexists}, there is a map from
$\textnormal{Sur}(\pi_1(\Gamma_n),G)$ to $\{$equivalence classes of finite regular covers of $\Omega\}$.
By Lemma \ref{lem:1Dsame}, this extends to a map on $\dlim \textnormal{Sur}(\pi_1(\Gamma_n),G)$. Lemma \ref{lem:1Dsurjective}
shows that this extended map is surjective and Lemma \ref{lem:1Dauto} shows that the inverse image of any equivalence 
class of covers is an $\textnormal{Aut}(G)$-equivalence class of $\Phi$'s. That is, we have a bijection between 
$\dlim \textnormal{Sur}(\pi_1(\Gamma_n),G)/\textnormal{Aut}(G)$ and $\{$equivalence classes of finite regular covers of $\Omega\}$.

\subsection{Higher dimensional tiling spaces}\label{subsec:dimension}

We now prove Theorem \ref{thm:covers} in all dimensions. There are four steps: 
\begin{enumerate}
\item  To each homomorphism 
$\phi_n: \pi_1(\Gamma_n) \to G$ such that $\phi_m$ is surjective for all $m \ge n$, 
associate a regular, connected covering space $\tilde \Omega$ with deck transformation
group $G$.
\item Show that homomorphisms representing the same element of 
$\dlim \textnormal{Sur}(\pi_1(\Gamma_n),G)$ yield equivalent covering spaces. 
\item Show that every equivalence class of finite, regular covers of $\Omega$ has a 
representative of this form. 
\item Show that two elements $\Phi, \Phi' \in \dlim \textnormal{Sur}(\pi_1(\Gamma_n),G)$ yield equivalent covers if 
and only if $\Phi' = A \circ \Phi$ for some $A \in \textnormal{Aut}(G)$. 
\end{enumerate}

Each step is essentially a repeat of the proof of Lemma \ref{lem:1Dexists}, Lemma \ref{lem:1Dsame}, 
Lemma \ref{lem:1Dsurjective} or Lemma \ref{lem:1Dauto}, only without the simplifying assumption of 1-dimensionality,
which gave us a canonical path from any point in a tiling to any other point. The extra work of dealing with
$d$-dimensional geometry is a significant issue in Step 1, but only in Step 1. 

\noindent {\bf Step 1:} We express $\Omega$ as an inverse limit of approximants such that a point
in $\Gamma_n$ determines a tiling in a region containing the origin, with the region growing to cover all
of $\R^d$ as $n \to \infty$. There are forgetful maps $\Gamma_n \to \Gamma_{n-1}$, since if we know a tiling 
in a bigger region we certainly know it in a smaller region. Any homomorphism $\phi_n: \pi_1(\Gamma_n) \to G$
induces maps $\phi_{n+1}$, $\phi_{n+2}$, etc. We imagine that we are given a surjection $\phi_n$ such that 
every $\phi_m$ with $m \ge n$ is also a surjection. 

To construct a cover of $\Omega$ from $\phi_n$, we first identify a patch $P_0$ determined by a point in 
$\Gamma_n$. We convert any tiling $T \in \Omega$ into a Delone multiset $D$ 
whose points are the locations of $P_0$, 
with each point carrying a label with enough information to reconstruct $T$ from $D$ in a local way. 
The dual Voronoi tiling associated with $D$ is then an FLC and repetitive 
tiling $T'$ that is MLD to $T$, such that the vertices of $T'$ are exactly the locations of $P_0$ in $T$. 
Let $\Omega'$ be the tiling space of $T'$. We will construct of cover of $\Omega'$, which is then also a cover
of $\Omega$. 

To get a tiling in $\tilde \Omega_{\phi_n}$, we assign a subscript in $G$ to every vertex in $T'$. Any path
between vertices $v_1$ and $v_2$ corresponds to a loop in $\Gamma_n$ and so an element of $\pi_1(\Gamma_n)$. 
Different paths from $v_1$ to $v_2$ are homotopic, so all define the same element. In place of a following
rule, we have a {\em return rule}:
\begin{itemize}
\item The subscript at $v_2$ is the subscript at $v_1$ times $\phi_n(\gamma)$, where $\gamma$ is the  
element of $\pi_1(\Gamma_n)$ associated with any path from $v_1$ to $v_2$.
\end{itemize}
This rule gives a consistent labeling of all vertices of $T'$, since $\phi_n$ is a group homomorphism and since
any path from $v_1$ to $v_3$ is homotopic to a path from $v_1$ to $v_2$ followed by a path from $v_2$ to $v_3$. 
The group $G$ acts on these subscripted tilings by left-multiplication of every subscript. The projection to 
$\Omega'$ is just erasing the subscripts. 

As with 1-dimensional tilings, the connectedness of $\tilde \Omega_{\phi_n}$ follows from the repetitivity of 
$\Omega$ and the fact that $\phi_m$ is surjective for all $m \ge n$. We consider a patch $P_1$, based at 
a location featuring the original patch $P_0$, that is big enough to contain a generating set of return vectors from
one $P_0$ to another, that is a set of return vectors whose image under $\phi_m$ generates $G$, 
and thus to contain a generating set of return vectors from a single copy of $P_0$
to the other copies. By our assumption about $\phi_m$, there exists a generating set of return vectors 
of $P_1$, so there exists a patch $P_2$ containing this generating set, and so on. 

By the pigeonhole principle, 
the same set $\{s_1, \ldots, s_k\}$ of generators of $G$ appears infinitely often, so by telescoping we 
can assume that the images of the return vectors of each $P_i$ are the same. For each ordered pair $(i,j)$
of natural numbers, where $j \le k$, we can find a vector $v_{i,j}$, pointing from the ``central'' $P_i$ of $P_{i+1}$
to another copy of $P_i$, whose image under $\phi_n$ is $s_j$. 

Now let $g$ be any element of $G$. We can write $g$ as a product $s_{j_1}s_{j_2}\cdots s_{j_\ell}$ of generators 
of $G$. Starting at the central $P_{\ell-1}$ of $P_\ell$ (which is also the location of a $P_0$), we can 
get to another $P_0$ via the vector $\sum_{i=1}^\ell v_{\ell-i, j_i}$. This is a path from a $P_{\ell-1}$
to another $P_{\ell-1}$ in $P_\ell$, to another $P_{\ell-2}$ in that $P_{\ell-1}$, to another $P_{\ell-3}$ in 
that $P_{\ell-2}$, and so on. $\phi_n$ of this concatenation of paths is then 
$s_{j_1}s_{j_2}\cdots s_{j_\ell}=g$. Thus $P_0$ appears with all possible subscripts in a single tiling. 
Likewise, every other patch appears with all possible subscripts. This shows that $\tilde \Omega_{\phi_n}$ is 
a connected $G$-cover of $\Omega'$, and thus of $\Omega$.

\begin{figure}[t]
\centering
\begin{tikzpicture}
\draw(0,0)circle[x radius=4,y radius=3];
\draw(0,3)node[above]{\(P_3\)};
\draw(-2,0)circle[x radius=1.5,y radius=1.5];
\draw(-2,1.5)node[above]{\(P_2\)};
\draw(2,0)circle[x radius=1.5,y radius=1.5];
\draw(2,1.5)node[above]{\(P_2\)};
\draw(-2.75,0)circle[x radius=0.5,y radius=0.5];
\draw(-2.75,0.5)node[above]{\(P_1\)};
\draw(-1.25,0)circle[x radius=0.5,y radius=0.5];
\draw(-1.25,0.5)node[above]{\(P_1\)};
\draw(1.25,0)circle[x radius=0.5,y radius=0.5];
\draw(1.25,0.5)node[above]{\(P_1\)};
\draw(2.75,0)circle[x radius=0.5,y radius=0.5];
\draw(2.75,0.5)node[above]{\(P_1\)};
\filldraw(-3,0)circle[radius=0.05];
\draw(-2.5,0)circle[radius=0.05];
\draw(-1.5,0)circle[radius=0.05];
\draw(-1,0)circle[radius=0.05];
\draw(1,0)circle[radius=0.05];
\draw(1.5,0)circle[radius=0.05];
\draw(2.5,0)circle[radius=0.05];
\filldraw(3,0)circle[radius=0.05];
\draw[->,dashed](-3,-0.1)arc[x radius=2,y radius=1.75,start angle=-180,end angle=-90];
\draw[dashed](-1,-1.85)arc[x radius=2,y radius=1.75,start angle=-90,end angle=0];
\draw(-1,-1.85)node[below]{\(v_{2,i_1}\)};
\draw[->,dashed](1,-0.1)arc[x radius=0.75,y radius=0.65625,start angle=-180,end angle=-90];
\draw[dashed](1.75,-0.75625)arc[x radius=0.75,y radius=0.65625,start angle=-90,end angle=0];
\draw(1.75,-0.75625)node[below]{\(v_{1,i_2}\)};
\draw[->,dashed](2.5,-0.1)arc[x radius=0.25,y radius=0.21875,start angle=-180,end angle=-90];
\draw[dashed](2.75,-0.31875)arc[x radius=0.25,y radius=0.21875,start angle=-90,end angle=0];
\draw(2.75,-0.31875)node[below]{\(v_{0,i_3}\)};
\end{tikzpicture}
\caption{If $\ell=3$, the vector $\sum_{i=1}^\ell v_{\ell-i, j_i}$ takes us from the base point of $P_3$ to another 
copy of $P_0$.}
\label{fig:Return-vecs}
\end{figure}%

\noindent{\bf Step 2:} Suppose that $\phi_n$ and $\phi'_m$ represent the same element of 
$\dlim \textnormal{Sur}(\pi_1(\Gamma_n),G)$. We must show that $\tilde \Omega_{\phi_n}$ and $\tilde \Omega_{\phi'_m}$ are
equivalent covers. We do this by showing that these spaces are MLD equivalent. 

A tiling $\tilde T \in \tilde \Omega_{\phi_n}$ determines a tiling $T \in \Omega$, which then determines one of 
$|G|$ possible tilings $\tilde T' \in \tilde \Omega_{\phi'_m}$. The only question is whether we can determine the
subscripts in $\tilde T'$ by a local operation from the subscripts in $\tilde T$, and vice versa. 

By assumption, there is an $N$ such that $\phi_N=\phi'_N$. We find a patch $P_N$, corresponding to the base
point of $\Gamma_N$, containing both $P_n$ and $P'_m$. Since the return rules for $P_N$ are the same in 
$\tilde \Omega_{\phi_n}$ and $\tilde \Omega_{\phi'_m}$, we can take the subscripts at each $P_N$ to be the same
in $\tilde T$ and $\tilde T'$ without introducing any inconsistencies. The subscripts at all other points of $\tilde T'$ 
(or $\tilde T$) are determined by the return rules, starting at the nearest $P_N$. Since the $P_N$'s appear with
bounded gaps, this is a local operation. 

\noindent{\bf Step 3:} 

Let $p: \tilde \Omega \to \Omega$ be a 
regular cover of $\Omega$ with
a finite group $G$ of deck transformations and with $\tilde \Omega$ connected.
Consider an approximant
$\Gamma_n$ such that a point in $\Gamma_n$ corresponds to a patch $P$, such that there is evenly-covered  
cylinder set $C$ consisting of all tilings in $\Omega$ that have $P$ at the origin, up
to a small translation. Let $C_e\subset \tilde \Omega$ be one of the $|G|$ preimages of $C$, and for 
each $g \in G$ let $C_g$ be the image of $C_e$ by the deck transformation corresponding to $g$.

Consider a path in a tiling, from one copy of $P$ to another. This corresponds to a loop in $\Gamma_n$. 
The change in subscript between the initial and final instance of $P$ depends only on the path in 
$\Gamma_n$. (If we start with two tilings that agree along a neighborhood of size $|P|$ along the path, 
then the translates in $\Omega$ remain in an evenly-covered neighborhood, so the lifts of these tilings 
must likewise remain close.) Moreover, this change commutes with deck transformations and so corresponds
to right multiplication. The images of actual paths in tilings generate $\pi_1(\Gamma_n)$, so this determines 
a homomorphism $\phi_n: \pi_1(\Gamma_n) \to G$. We then build the cover $\tilde \Omega_{\phi_n}$ as in 
Step 1. 

We claim that $\tilde \Omega$ and $\tilde \Omega_{\phi_n}$ are equivalent. A point in either 
covering space determines a tiling $T \in \Omega$, hence one of $|G|$ possible tilings in the other cover. 
If $T$ has a copy of $P$ at the origin then in $\tilde \Omega$ we are in a specific cylinder set $C_g$ and the
corresponding tiling in $\tilde \Omega_{\phi_n}$ has a vertex at the origin with the same subscript $g$.  
Since $\Omega$ is repetitive, there is always a copy of $P$ in some fixed neighborhood of the origin, 
so we can translate it to the origin, equate the subscripts, and translate back. This operation is local and therefore continuous. 

\noindent{\bf Step 4:} The last step is almost word-for-word the same as in one dimension. 
If $\Phi' = A \circ \Phi$, then applying $A$ to all the subscripts of 
tilings in 
$\tilde \Omega_{\Phi}$ gives a homeomorphism to $\tilde \Omega_{\Phi'}$ that covers the identity on $\Omega$. 

For the other direction, suppose that we have a homeomorphism $h: \tilde \Omega_{\Phi} \to \tilde \Omega_{\Phi'}$
that covers the identity. Since $h$ is uniformly continuous, for any sufficiently large patch $P$ and any
fixed subscript $g$ the image $h(C_g)$ of a cylinder set $C_g$ must lie in a single cylinder set $C'_{g'}$ of $P$
in $\tilde \Omega_{\Phi'}$. In particular, if $w$ is a return vector of $P$ with $\phi_n(w)=e$, then 
$\phi'_n(w)$ must also be $e$. That is, the kernel of $\phi_N$ for $N$ large lies in the kernel of $\phi'_N$.
A similar argument shows that the kernel of $\phi'_N$ lies in
the kernel of $\phi_N$. Since $\phi_N$ and $\phi'_N$ are both 
surjections onto $G$ and have the same kernel, they must be related by an automorphism of $G$. 
That is, $\Phi' = A \circ \Phi$. 

This completes the proof of Theorem \ref{thm:covers}.

\subsection{Substitution structures}

We now prove Theorem \ref{thm:substitution-covers}. If $\Omega$ is a substitution tiling, then 
$\Omega = \ilim (\Gamma, \sigma)$, where all the approximants $\Gamma_i$ equal a single complex $\Gamma$.
Since $\pi_1(\Gamma)$ is finitely generated, for each finite group $G$ the set $\textnormal{Hom}(\pi_1(\Gamma),G)$
has bounded size, so $\dlim \textnormal{Sur}(\pi_1(\Gamma_n),G) \subset \dlim \textnormal{Hom}(\pi_1(\Gamma_n), G)$ is finite. 
By Theorem \ref{thm:covers}, this means that there are only finitely many $G$-covers of 
$\Omega$, up to equivalence.

For each non-negative integer $n$, let $p_n = \sigma^n \circ p$. Since 
$p$ is a $G$-cover of $\Omega$ by $\tilde \Omega$ and $\sigma$ is a single cover of $\Omega$ by itself, 
$p_n$ is a $g$-covering map. Since there are only finitely many covering maps up to equivalence, this 
sequence of covering maps must be eventually periodic up to equivalence. 
That is the same thing as saying the maps $\Phi$ that define
$p$ are eventually periodic up to an automorphism of $G$. 
We can replace the first few terms in the sequence $\Phi$ without changing
the limit, so as to make the entire sequence periodic. But then there is an $n$ such that $p_n$ is 
equivalent to $p$. 

By the definition of equivalence of covers, there exists a map $\tilde \sigma: \tilde \Omega \to 
\tilde \Omega$ such that $p \circ \tilde \sigma = p_n = \sigma^n \circ p$. That is, the 
map $\tilde \sigma$ on 
$\tilde \Omega$ covers the substitution $\sigma^n$ on $\Omega$, making $\tilde \Omega$ a substitution
tiling space. \qed 

We will examine an example of this periodicity of covers, and the construction of $\tilde \sigma$ from $\sigma^n$,
when we study the Fibonacci tiling in Section \ref{sec:Fib}. 

\section{\'Etale vs.~profinite $\pi_1$}\label{sec:same}

\indent In this section, we prove Theorem \ref{thm:same}. We begin by giving a clearer picture of the elements
of \(\hat{\pi}_1(\Omega)\). This picture allows us to then connect to
\(\varinjlim_{{\sigma_i}_\ast}\textnormal{Sur}(\pi_1(\Gamma_i),G)/\textnormal{Aut}(G)\).
We then show that existence of maps between deck transformation groups induced by taking covers of covers,
culminating in a proof of Theorem \ref{thm:same}, and obtaining topological invariance of \(\hat{\pi}_1(\Omega)\)
as a consequence.\\
\indent Recall that the definition of the profinite fundamental group of an inverse limit space is built on the
induced map between profinite completions of discrete groups. That is, given \(G\) and \(H\) discrete and a map
\(\phi:G\rightarrow H\), the map of interest, \(\hat{\phi}:\hat{G}\rightarrow\hat{H}\), is induced by
\(G/\phi^{-1}(N)\rightarrow H/N\). To provide a coherent picture, let us simplify \(\hat{\phi}\) by stating a few
properties of the basic building blocks, maps of the form \(G/\phi^{-1}(N)\rightarrow H/N\).

\begin{prop}[Stabilization of \(G/\phi^{-1}(N)\rightarrow H/N\)]
\label{prop:stabilization}
Let \(G\) and \(H\) be discrete and finitely-generated, \(\phi:G\rightarrow H\), and \(N\trianglelefteq H\).
Then \(G/\phi^{-1}(N)\) is isomorphic to a subgroup of \(H/N\). In fact, the induced homomorphism on quotients
\(\phi_N:G/\phi^{-1}(N)\rightarrow H/N\) is injective.\\
\indent Furthermore, for each \(i\in\mathbb{N}\), let \(\phi_{i+1}:G_{i+1}\rightarrow G_i\).
If \(N=N_0\trianglelefteq G_0\) with \([G_0:N_0]<\infty\), let \(N_i=(\phi_i\circ\cdots\circ\phi_1)^{-1}(N_0)\),
and \(\phi_{N,i+1}:G_{i+1}/N_{i+1}\rightarrow G_i/N_i\) be the induced homomorphisms on quotients.
Then there is an \(n\in\mathbb{N}\) so that for all \(i\in\mathbb{N}\), \(i\geq n\), \(\phi_{N,i}\)
are isomorphisms. In particular, if each \(G_i=G\) and \(\phi_i=\phi\), then there are
\(m,n\in\mathbb{N}\) so that for all \(i\in\mathbb{N}\), \(N_m=N_{m+in}\), and thus \(G/N_m=G/N_{m+in}\).
\begin{proof}
  \(\phi^{-1}(N)=\textnormal{ker}(\pi_{N\trianglelefteq H}\circ\phi)\). By the First Isomorphism Theorem,
  \(G/\phi^{-1}(N)=G/\textnormal{ker}(\pi_{N\trianglelefteq H}\circ\phi)\cong\textnormal{im}(\pi_{N\trianglelefteq H}\circ\phi)\leq H/N\).\\
  \indent For injectivity of \(\phi_N\), given \(g\in G/\phi^{-1}(N)\) so that
  \(\phi_N(g)=\pi_{N\trianglelefteq H}\circ\phi\circ\pi_{\phi^{-1}(N)\trianglelefteq G}^{-1}(g)=1\), \(\phi\circ\pi_{\phi^{-1}(N)\trianglelefteq G}^{-1}(g)\in N\),
  and \(\pi_{\phi^{-1}(N)\trianglelefteq G}^{-1}(g)\in\phi^{-1}(N)\), thus \(g=1\).\\
  \indent Given \(\phi_{i+1}:G_{i+1}\rightarrow G_i\) and \(N_0\trianglelefteq G_0\) with \([G_0:N_0]<\infty\),
  consider the function \(f:\mathbb{N}\rightarrow\mathbb{N}\) with \(i\mapsto |G_i/N_i|\). This is well-defined
  by the previous part, and is a nonincreasing function that is bounded from below. Thus \(f\) has a limit
  that it converges to after finitely many terms, say \(k\in\mathbb{N}\). Since, for all \(i\in\mathbb{N}\),
  \(i\geq k\), \(\phi_{N,i+1}\) is injective on finite groups of the same order, it is surjective, therefore an isomorphism.\\
  \indent Finally, if each \(G_i=G\) and \(\phi_i=\phi\), by Galois Correspondence, for each \(i\in\mathbb{N}\),
  there is some regular cover \(p_i:X_i\rightarrow K(G,1)\) so that \(N_i={p_i}_\ast(\pi_1(X_i))\). Since there are
  only finitely many regular covers of a fixed degree up to equivalence, there are
  \(m,n\in\mathbb{N}\), \(m\geq k\), so that for all \(i\in\mathbb{N}\), \(p_m\) and \(p_{m+in}\) are isomorphic.
  By Galois Correspondence, \(N_m={p_m}_\ast(\pi_1(X_m))={p_{m+in}}_\ast(\pi_1(X_{m+in}))=N_{m+in}\).
\end{proof}
\end{prop}

\noindent Each sequence \(\{N_i\}_{i\in\mathbb{N}}\) satisfying the latter part of Proposition \ref{prop:stabilization}
is associated to a sequence of quotients of the form \(\{G_i/N_i\}_{i\in\mathbb{N}}\), and is key to understanding
\(\varprojlim_{\hat{\phi}_i}\hat{G}_i\). The case where each \(G_i=G\) and \(\phi_i=\phi\) is particularly useful
as it mirrors the case of a substitution tiling space. We give each of these names.

\begin{definition}
  Let \(\{\phi_{i+1}:G_{i+1}\rightarrow G_i\}_{i\in\mathbb{N}}\). Given \(n\in\mathbb{N}\), let \(N_n\trianglelefteq G_n\)
  with \([G_n:N_n]<\infty\). For each \(i\in\mathbb{N}_{\geq n+1}\), denote \(N_i=(\phi_i\circ\cdots\circ\phi_{n+1})^{-1}(N_n)\)
  and \(\phi_{{N_n},i+1}:G_{i+1}/N_{i+1}\rightarrow G_i/N_i\) the induced homomorphism on quotients.
  Each \(G_i/N_i\) is a \emph{vertex}, and \(\phi_{N_n,i+1}\) an \emph{edge}. The collection
  \(\{\phi_{N_n,i}\}_{i\in\mathbb{N}_{\geq n+1}}\) is a \emph{path}, and \(G_n/N_n\) is a \emph{root}.\\
  \indent In this case each \(G_i=G\) and \(\phi_i=\phi\), and there are \(m\in\mathbb{N}_{\geq n}\) and
  \(\ell\in\mathbb{N}\) so that for all \(i\in\mathbb{N}\), \(N_m=N_{m+i\ell}\), the sequence
  \(\{N_i\}_{i\in\mathbb{N}_{\geq n}}\)  (or just \(N_n\)) is \emph{eventually periodic of period \(\ell\)}.
  It is \emph{periodic} if we can choose \(m=0\). Similarly for the corresponding path.
\end{definition}

\noindent In comparison to the setup in Proposition \ref{prop:stabilization}, we have slightly generalized
the definition of paths so roots need not be at level-\(0\).

\begin{remark} When looking for periodicities, it is not enough to require  
\(G/\phi^{-m}(N)\cong G/\phi^{-(m+i)}(N)\). We need the groups $\phi^{-m}(N)$ and 
$\phi^{-(m+i)}(N)$ to be literally the same. If $G=\pi_1(X)$, where $X$ is connected 
and locally simply connected, distinct subgroups $N, N'$ with $G/N \cong G/N'$ correspond to 
different covers of $X$ and to potentially different elements of $\pi_1^{\textnormal{et}}(X)$.
\end{remark}

\indent Using Proposition \ref{prop:stabilization}, we can now describe the inverse limit of induced maps on
profinite completions of a sequence of homomorphisms of discrete groups of the form
\[
\begin{tikzcd}
\cdots\arrow[r]&G_{i+1}\arrow[r,"\phi_{i+1}"]&G_i\arrow[r,"\phi_i"]&\cdots\arrow[r,"\phi_2"]&G_1\arrow[r,"\phi_1"]&G_0.
\end{tikzcd}
\]

\begin{prop}[Elements of \(\varprojlim_{\hat{\phi}_i}\hat{G}_i\)]
\label{prop:elements-projlim}
Let \(\{G_i\}_{i\in\mathbb{N}}\) be groups with the discrete topology, and \(\phi_{i+1}:G_{i+1}\rightarrow G_i\).
\(\varprojlim_{\hat{\phi}_i}\hat{G}_i\) has the structure of a disjoint union of generalized Borel--Bratteli
diagrams whose elements are paths whose roots need not be at level-\(0\) or uniformly fixed,
both among the roots of a single component and among all components.\\
\indent Each component in the disjoint union is a tree and therefore has one tail equivalence class.
It also has a natural partial order with minimal paths. In particular, if each \(G_i=G\) and \(\phi_i=\phi\),
each component has a minimal path with its root at level-\(0\), namely a periodic \(N\).
\begin{proof}
  For each \(i\in\mathbb{N}\), let the \(V_i\), the vertex set at level-\(i\), be the set of all finite quotients
  of \(G_i\), and let \(E_{i+1}\), the edge set from \(V_{i+1}\) to \(V_i\), correspond to
  \(\phi_{i+1,N_i}:G_{i+1}/\phi_i^{-1}(N_i)\rightarrow G_i/N_i\). By construction, there is at most one path
  between any two vertices, thus each component is a tree.\\
  \indent Define a partial order on \(V_i\) where \(G_i/N_i\preceq G_i/M_i\) if \(M_i\leq N_i\),
  so that there is a projection \(\pi_{N_i\trianglelefteq G_i}\circ\pi_{M_i\trianglelefteq G_i}^{-1}:G_i/M_i\rightarrow G_i/N_i\),
  and \(\phi_{i+1}^{-1}(M_i)=\phi_{i+1}^{-1}(N_i)\). From the tree structure, this induces a partial order on the
  set of edges out of any vertex. Since the partial order on the edges out of a vertex is compatible with the
  order of the groups, there is a lower bound on the edges, and there are minimal paths.\\
  \indent If each \(G_i=G\) and \(\phi_i=\phi\), by Proposition \ref{prop:stabilization}, any
  \(N\trianglelefteq G\) is eventually periodic of period \(n\in\mathbb{N}\) starting at some \(m\in\mathbb{N}\).
  Taking \(M=\phi^{-{n\lfloor(m+n-1)/n\rfloor}}(N)\) then gives, for all \(i\in\mathbb{N}\), \(i\geq m\), \(M_i=N_i\),
  and \(M\) is thus periodic. For minimality of \(M\), suppose that there is an \(N\trianglelefteq G\) so that
  \(\phi^{-1}(N)=\phi^{-1}(M)\) and \(M\leq N\). Then
\[
\begin{tikzcd}
G/\phi^{-1}(M)\arrow[r,"\phi_M"]\arrow[rd,swap,"\phi_N"]&G/M\arrow[d,two heads,"\pi_{N\trianglelefteq G}\circ\pi_{M\trianglelefteq G}^{-1}"]\\
&G/N
\end{tikzcd}
\]
commutes with \(\phi_M\) an isomorphism and \(\phi_N\) an injection by Proposition \ref{prop:stabilization}.
Thus \(\phi_N\) is an isomorphism, as is the quotient map \(\pi_{N\trianglelefteq G}\circ\pi_{M\trianglelefteq G}^{-1}\),
which is the identity.
\end{proof}
\end{prop}

\begin{remark}
  Paths in this circumstance are more general than those arising from ordinary ordered (Borel--)Bratteli diagrams.
  In particular, roots need not be at level-\(0\), and minimum paths need not exist. In fact, there may be roots
  at level \(n\) for arbitrarily large \(n\in\mathbb{N}\), and there may be multiple incomparable vertices that
  are groups of the same order with edges out of the same vertex on a prior level!
\end{remark}

\begin{remark}
  Minimal paths are \emph{not} elements of \(\varprojlim_{\hat{\phi}_i}\hat{G}_i\)! They, however, act as
  representatives of each tree. In fact, any path does. In particular, if each \(G_i=G\) and \(\phi_i=\phi\),
  periodic paths act as representatives.\\
  \indent Furthermore, since each \(\phi_{i+1,N_i}\) is injective, each path \(\varprojlim_{\phi_{i+1,N_i}}G_i/N_i\)
  can be written as a direct limit instead (eventual range of the opposite map).
\end{remark}

\indent Finally, applying this machinery with each \(G_i=\pi_1(\Gamma_i)\) and
\(\phi_{i+1}={\sigma_{i+1}}_\ast:\pi_1(\Gamma_{i+1})\rightarrow\pi_1(\Gamma_i)\), we obtain that the elements of the
profinite fundamental group of an inverse limit space are given in Proposition \ref{prop:elements-projlim}. Maps of the form
\[
  \pi_{N\trianglelefteq\pi_1(\Gamma_i)}\circ{\sigma_{i+1}}_\ast\circ\pi_{{\sigma_{i+1}}_\ast^{-1}(N)\trianglelefteq
    \pi_1(\Gamma_{i+1})}^{-1}:\pi_1(\Gamma_{i+1})/{\sigma_{i+1}}_\ast^{-1}(N)\rightarrow\pi_1(\Gamma_i)/N
\]
are important, and we will abbreviate them by \({{\sigma_{i+1}}_\ast}_N\).\\
\indent The following proposition describes the connection between \(\hat{\pi_1}(\Omega)\) and elements
of \(\varinjlim_{{\sigma_i}_\ast}\textnormal{Sur}(\pi_1(\Gamma_i),G)/\textnormal{Aut}(G)\).

\begin{prop}
\label{prop:profinite-deck-transformation-isomorphic}
Elements of \(\varinjlim_{{\sigma_i}_\ast}\textnormal{Sur}(\pi_1(\Gamma_i),G)/\textnormal{Aut}(G)\) correspond to
the tail equivalence classes of \(\hat{\pi}_1(\Omega)\) where the paths have vertices eventually isomorphic to \(G\).
\begin{proof}
This is a direct consequence of the identification
\begin{align*}
\textnormal{Sur}(\pi_1(\Gamma_i),G)/\textnormal{Aut}(G)&\leftrightarrow\left\{\pi_1(\Gamma_i)/N\cong G\right\}\\
[\phi_i]&\leftrightarrow\pi_1(\Gamma_i)/\textnormal{ker}\,\phi_i
\end{align*}
applied to the fact that maps
\[
\textnormal{Sur}(\pi_1(\Gamma_i),G)\rightarrow\textnormal{Sur}(\pi_1(\Gamma_i),G)
\]
are given by
\[
\phi_i\mapsto\phi_i\circ{\sigma_{i+1}}_\ast
\]
restricted to the elements of \(\textnormal{Sur}(\pi_1(\Gamma_i),G)\) that map to surjections. Note that since
both \(\phi_i\) and \(\phi_i\circ{\sigma_{i+1}}_\ast\) are surjections, the First Isomorphism Theorem gives
\[
\pi_1(\Gamma_{i+1})/\textnormal{ker}(\phi_i\circ{\sigma_{i+1}}_\ast)\cong\pi_1(\Gamma_i)/\textnormal{ker}\,\phi_i
\]
via the map \({{\sigma_{i+1}}_\ast}_{\textnormal{ker}\,\phi_i}\).\\
\indent Elements of \(\varinjlim_{{\sigma_i}_\ast}\textnormal{Sur}(\pi_1(\Gamma_i),G)/\textnormal{Aut}(G)\)
are infinite sequences. The identification sends \([\phi_i]\) in the sequence to the quotient
\(\pi_1(\Gamma_i)/\textnormal{ker}\,\phi_i\), and the map \([\phi_i]\mapsto[\phi_i\circ{\sigma_{i+1}}_\ast]\)
is sent to \(\pi_1(\Gamma_i)/\textnormal{ker}\,\phi_i\mapsto\pi_1(\Gamma_{i+1})/\textnormal{ker}(\phi_i\circ{\sigma_{i+1}}_\ast)
=\pi_1(\Gamma_{i+1})/({\sigma_{i+1}}_\ast)^{-1}(\textnormal{ker}\,\phi_i)\), which is exactly the inverse of
\({{\sigma_{i+1}}_\ast}_{\textnormal{ker}\,\phi_i}\).
\end{proof}
\end{prop}

\noindent A few remarks are in order.

\begin{remark}
  An infinite sequence of surjections in \(\varinjlim_{{\sigma_i}_\ast}\textnormal{Sur}(\pi_1(\Gamma_i),G)/\textnormal{Aut}(G)\)
  corresponds to a path in \(\hat{\pi}_1(\Omega)\) (not necessarily rooted at level-\(0\)) where all of the vertices are isomorphic,
  which is what allows us to flip the arrows around and turn the inverse limit of \({\sigma_i}_\ast\) from \(\hat{\pi}_1(\Omega)\)
  into the direct limit of surjections.\\
  \indent Furthermore, the composition \(\phi_i\mapsto\phi_i\circ{\sigma_{i+1}}_\ast\) described in the proof above
  \emph{does not require that \({\sigma_{i+1}}_\ast:\pi_1(\Gamma_{i+1})\rightarrow\pi_1(\Gamma_i)\) be surjective!}
\end{remark}

\begin{remark}
  Noting that, in addition to this proposition, the \'etale fundamental group yields information on finite,
  regular covers of other finite, regular covers, we observe that there is more algebraic structure to
  \(\varinjlim_{{\sigma_i}_\ast}\textnormal{Sur}(\pi_1(\Gamma_i),G)/\textnormal{Aut}(G)\) as well,
  by changing the target group \(G\). In particular, \(\phi:G\twoheadrightarrow H\) induces a map
\[
  \varinjlim_{{\sigma_i}_\ast}\textnormal{Sur}(\pi_1(\Gamma_i),G)/\textnormal{Aut}(G)\rightarrow
  \varinjlim_{{\sigma_i}_\ast}\textnormal{Sur}(\pi_1(\Gamma_i),H)/\textnormal{Aut}(H)
\]
if and only if there is an \(n\in\mathbb{N}\) so that for all \(i\in\mathbb{N}_{\geq n}\),
\[
\begin{tikzcd}
{[\phi_i]}\arrow[r,mapsto]\arrow[d,mapsto]&{[\phi_i\circ{\sigma_{i+1}}_\ast]}\arrow[d,mapsto]\\
{[\phi\circ\phi_i]}\arrow[r,mapsto]&{[\phi\circ\phi_i\circ{\sigma_{i+1}}_\ast]}
\end{tikzcd}
\]
commutes.
\end{remark}

\indent Before translating the last remark into terms more aligned with Galois Correspondence for
ordinary topological spaces, let us summarize key parts of Theorem \ref{thm:covers}.

\begin{prop}
\label{prop:deck-transformation-commute}
Suppose that \(p:\Omega'\rightarrow\Omega\) is a finite, regular cover with
\(\Omega=\varprojlim_{\sigma_i}\Gamma_i\) and \(\Omega'=\varprojlim_{\sigma_i'}\Gamma_i'\)
given by Theorem \ref{thm:covers}, chosen so that, for some \(n\in\mathbb{N}\),
for all \(i\in\mathbb{N}_{\geq n}\), the maps \(p_i:\Gamma_i'\rightarrow\Gamma_i\)
are projections associated to quotients of \(\pi_1(\Gamma_i)\). Let
\begin{align*}
\pi_i&:\Omega\rightarrow\Gamma_i\\
\pi_i'&:\Omega'\rightarrow\Gamma_i'
\end{align*}
be the natural quotient maps. Then, for any pair \(g\in\textnormal{Deck}(\Omega'/\Omega)\)
and \(g_i\in\textnormal{Deck}(\Gamma_i'/\Gamma_i)\) associated by the natural isomorphism
\(\textnormal{Deck}(\Omega'/\Omega)\cong\textnormal{Deck}(\Gamma_i'/\Gamma_i)\) from Proposition
\ref{prop:profinite-deck-transformation-isomorphic}, the diagram
\[
\begin{tikzcd}
\Omega'\arrow[rd,"g"]\arrow[dd,swap,"p"]\arrow[rrrr,"\pi_i'"]&&&&\Gamma_i'\arrow[rd,"g_i"]\\
&\Omega'\arrow[ld,"p"]\arrow[rrrr,"\pi_i'"]&&&&\Gamma_i'\arrow[ld,"p_i"]\\
\Omega\arrow[rrrr,"\pi_i"]&&&&\Gamma_i\arrow[from=uu,crossing over,swap,"p_i" fill=white]
\end{tikzcd}
\]
commutes.
\begin{proof}
  From Theorem \ref{thm:covers}, \(p_i\circ\pi_i'=\pi_i\circ p\) and, from Proposition
  \ref{prop:profinite-deck-transformation-isomorphic}, the isomorphism
  \(\textnormal{Deck}(\Omega'/\Omega)\cong\textnormal{Deck}(\Gamma_i'/\Gamma_i)\) is given by the map
  \(\textnormal{Deck}(\Omega'/\Omega)\rightarrow\textnormal{Deck}(\pi_i'(\Omega')/\pi_i(\Omega_i))
  =\textnormal{Deck}(\Gamma_i'/\Gamma_i)\), thus the top square commutes as well.
  The two sides commute by definition of covers.
\end{proof}
\end{prop}

\indent We have the following corollary as an immediate consequence.

\begin{cor}
\label{cor:deck-transformation-correspondence}
Following the same setup as Proposition \ref{prop:deck-transformation-commute}, for any \(x_i\in\Gamma_i\)
and any \(y_i\in p_i^{-1}(x_i)\), there is a correspondence
\[
\pi_i^{-1}(x_i)\leftrightarrow(\pi_i')^{-1}(y_i)
\]
induced by \(p_i\) and \(p\). In fact, there is a correspondence
\[
\bigsqcup_{\pi_i^{-1}(x_i)}p_i^{-1}(x_i)\leftrightarrow p^{-1}(\pi_i^{-1}(x_i))
\]
induced by \(\pi_i\) and \(\pi_i'\).
\begin{proof}
  Since \(p\) and \(p_i\) are regular covers,
  \(\textnormal{Deck}(\Omega'/\Omega)\ \rotatebox[origin=c]{-90}{\(\circlearrowright\)}\ \Omega'\)
  and \(\textnormal{Deck}(\Gamma_i'/\Gamma_i)\ \rotatebox[origin=c]{-90}{\(\circlearrowright\)}\ \Gamma_i'\)
  are free and transitive on each of the respective fibers. From Proposition \ref{prop:deck-transformation-commute},
  picking any \(x_i\in\Gamma_i\) and any \(y_i\in p_i^{-1}(x_i)\), the stabilizer of \(y_i\) over \(x_i\) is trivial,
  as is the stabilizer of each \(p^{-1}(x)\cap(\pi_i')^{-1}(y_i)\) over any given \(x\in\pi_i^{-1}(x_i)\),
  both due to freeness of the respective action. Transitivity of the corresponding action ensures that
  \(p^{-1}(x)\cap(\pi_i')^{-1}(y_i)\) is a singleton. Adding up among all \(x\in\pi_i^{-1}(x_i)\) gives the
  correspondence \(\pi_i^{-1}(x_i)\leftrightarrow(\pi_i')^{-1}(y_i)\).\\
\indent The second correspondence is obtained by adding up among all \(y_i\in p_i^{-1}(x_i)\), followed by reindexing.
\end{proof}
\end{cor}

\indent We are now ready to describe the connection between deck transformation groups of subcovers for finite,
regular covers of tiling spaces.

\begin{prop}
\label{prop:subcovers}
Suppose that
\[
\begin{tikzcd}
\Omega''\arrow[rr,"r"]\arrow[rd,swap,"q"]&&\Omega'\arrow[ld,"p"]\\
&\Omega
\end{tikzcd}
\]
with \(p\), \(q\), and \(r\) covers, and \(p\) and \(q\) finite and regular, and suppose that
\(\Omega=\varprojlim_{\sigma_i}\Gamma_i\) so that \(\Omega'=\varprojlim_{\sigma_i'}\Gamma_i'\)
and \(\Omega''=\varprojlim_{\sigma_i''}\Gamma_i''\) are given by Theorem \ref{thm:covers},
chosen so that, for some \(n\in\mathbb{N}\), for all \(i\in\mathbb{N}_{\geq n}\),
the maps \(p_i:\Gamma_i'\rightarrow\Gamma_i\) and \(q_i:\Gamma_i''\rightarrow\Gamma_i\)
are projections associated to quotients of \(\pi_1(\Gamma_i)\). Then for all \(i\in\mathbb{N}_{\geq n}\), \(r\)
induces covers \(r_i:\Gamma_i''\rightarrow\Gamma_i'\) so that the squares
\[
\begin{tikzcd}
\Gamma_{i+1}''\arrow[r,"\sigma_{i+1}''"]\arrow[d,"r_{i+1}"]&\Gamma_i''\arrow[d,"r_i"]\\
\Gamma_{i+1}'\arrow[r,"\sigma_{i+1}'"]&\Gamma_i'
\end{tikzcd}
\]
commute. Furthermore, \(r\) is regular.
\begin{proof}
For each \(i\in\mathbb{N}_{\geq n}\), let
\begin{align*}
\pi_i&:\Omega\rightarrow\Gamma_i\\
\pi_i'&:\Omega'\rightarrow\Gamma_i'\\
\pi_i''&:\Omega''\rightarrow\Gamma_i''
\end{align*}
be the natural quotient maps. We want to show that \(r_i\) is induced by the diagram
\[
\begin{tikzcd}
\Omega''\arrow[rd,"r"]\arrow[dd,swap,"q"]\arrow[rrrr,"\pi_i''"]&&&&\Gamma_i''\arrow[rd,dotted,"r_i"]\\
&\Omega'\arrow[ld,"p"]\arrow[rrrr,"\pi_i'"]&&&&\Gamma_i'\arrow[ld,"p_i"]\\
\Omega\arrow[rrrr,"\pi_i"]&&&&\Gamma_i\arrow[from=uu,crossing over,swap,"q_i" fill=white]
\end{tikzcd}.
\]
Let \(x_i''\in\Gamma_i''\). By Corollary \ref{cor:deck-transformation-correspondence},
\((\pi_i'')^{-1}(x_i'')\subseteq\Omega''\) over \(x_i''\in\Gamma_i''\) corresponds to
\(\pi_i^{-1}(q_i(x_i''))\subseteq\Omega\) over \(q_i(x_i'')\in\Gamma_i\), and
\(r((\pi_i'')^{-1}(x_i''))\subseteq\Omega'\) over \(\pi_i^{-1}(q_i(x_i''))\subseteq\Omega\)
corresponds to \(\pi_i'\circ r((\pi_i'')^{-1}(x_i''))\subseteq\Gamma_i'\) over
\(q_i(\{x_i''\})\subseteq\Gamma_i\). Since \(q=p\circ r\), each of which is a quotient map, we have correspondences
\[
(\pi_i'')^{-1}(x_i'')\leftrightarrow\pi_i^{-1}(q_i(x_i''))\leftrightarrow r((\pi_i'')^{-1}(x_i''))
\]
induced by \(q\) and \(r\), respectively, the latter of which gives that \(\pi_i'\circ r((\pi_i'')^{-1}(x_i''))\subseteq\Gamma_i'\)
is a singleton. By the Universal Property of Quotient Topology, \(r_i\) is the unique continuous map so that the top square commutes.\\
\indent Commutativity of the square in the statement of the proposition is a standard diagram chase of
\[
\begin{tikzcd}
&\Omega''\arrow[ld,swap,"\pi_{i+1}''"]\arrow[rrrr,"r"]&&&&\Omega'\arrow[ld,swap,"\pi_{i+1}'"]\arrow[dd,"\pi_i'"]\\
\Gamma_{i+1}''\arrow[rd,swap,"\sigma_{i+1}''"]\arrow[rrrr,"r_{i+1}"]&&&&\Gamma_{i+1}'\arrow[rd,swap,"\sigma_{i+1}'"]&\\
&\Gamma_i''\arrow[rrrr,"r_i"]\arrow[from=uu,crossing over,"\pi_i''" fill=white]&&&&\Gamma_i'
\end{tikzcd}
\]
where the rear top and front squares commute due to the above argument, and the two sides from the Universal Property of Inverse Limits.\\
\indent \(r_i\) a cover, since given any \(x_i'\in\Gamma_i'\) and any sufficiently small neighborhood \(U'\) of \(x_i\), \(U'\) is
homeomorphic to \(p_i(U')\), and \(q_i^{-1}(p_i(U'))\) is a collection of disjoint copies of \(U'\), as is \(r_i^{-1}(U')\).\\
\indent Regularity of \(r_i\) is the standard argument using Galois Correspondence, where
\({q_i}_\ast(\pi_1(\Gamma_i''))\trianglelefteq\pi_1(\Gamma_i)\) and \({q_i}_\ast(\pi_1(\Gamma_i''))
=(p_i\circ r_i)_\ast(\pi_1(\Gamma_i''))\leq{r_i}_\ast(\pi_1(\Gamma_i'))\trianglelefteq\pi_1(\Gamma_i)\),
so \({q_i}_\ast(\pi_1(\Gamma_i''))\trianglelefteq{r_i}_\ast(\pi_1(\Gamma_i'))\), and
\({r_i}_\ast(\Gamma_i'')\trianglelefteq\pi_1(\Gamma_i')\). Regularity of \(r\) then follows
from an application of Theorem \ref{thm:covers}.
\end{proof}
\end{prop}

\begin{cor}
\label{cor:induced-map-deck-transformation}
Following the same setup as Proposition \ref{prop:subcovers}, there is a map
\[
\textnormal{Deck}(\Omega''/\Omega)\rightarrow\textnormal{Deck}(\Omega'/\Omega).
\]
\begin{proof}
Proposition \ref{prop:deck-transformation-commute} and Proposition \ref{prop:subcovers} give that this map is induced by
\[
\textnormal{Deck}(\Gamma_i''/\Gamma_i)\rightarrow\textnormal{Deck}(\Gamma_i'/\Gamma_i)
\]
which exists because \({q_i}_\ast(\pi_1(\Gamma_i''))\leq{p_i}_\ast(\pi_1(\Gamma_i'))\),
and Galois Correspondence and the Universal Property of Quotient Groups give
\[
  \textnormal{Deck}(\Gamma_i''/\Gamma_i)\cong\pi_1(\Gamma_i)/{q_i}_\ast(\pi_1(\Gamma_i''))\rightarrow
  \pi_1(\Gamma_i)/{p_i}_\ast(\pi_1(\Gamma_i'))\cong\textnormal{Deck}(\Gamma_i'/\Gamma_i).
\]
\end{proof}
\end{cor}

\indent While Theorem \ref{thm:covers} constructs finite, regular covers of tiling spaces and expresses them in inverse limits,
it only does so for the paths in the profinite fundamental group where the deck transformation groups of the individual
approximants are isomorphic to the deck transformation of the overall cover. To better illuminate differences between
the \'etale and the profinite fundamental groups, the next proposition does so for each path in the profinite fundamental group.

\begin{prop}[Covers from paths]
\label{prop:construct-covers}
There is a finite, regular cover associated to each path in \(\hat{\pi}_1(\Omega)\).
\begin{proof}
  Let \(\{N_i\}_{i\in\mathbb{N}_{\geq n}}\) be such that, for all \(i\in\mathbb{N}_{\geq n}\),
  \(N_i\trianglelefteq\pi_1(\Gamma_i)\), \(N_{i+1}={\sigma_{i+1}}_\ast^{-1}(N_i)\), and there is no
  \(N\in\pi_1(\Gamma_{n-1})\) so that \(N_n={\sigma_n}_\ast^{-1}(N)\). By Galois Correspondence,
  for each \(i\in\mathbb{N}_{\geq n}\), there is a finite, regular cover \(p_i:\Gamma_i'\rightarrow\Gamma_i\),
  unique up to equivalence, so that \({p_i}_\ast(\pi_1(\Gamma_i'))=N_i\), and whose deck transformation group
  is then \(\pi_1(\Gamma_i)/N_i\). We then have the maps
\[
\begin{tikzcd}
\Gamma_{i+1}'\arrow[d,"p_{i+1}'"]&\Gamma_i'\arrow[d,"p_i'"]\\
\Gamma_{i+1}\arrow[r,"\sigma_{i+1}"]&\Gamma_i
\end{tikzcd}.
\]
By construction, \({\sigma_{i+1}}_\ast\circ {p_{i+1}}_\ast'(\pi_1(\Gamma_{i+1}'))={\sigma_{i+1}}_\ast(N_{i+1})\leq N_i
={p_i}_\ast'(\pi_1(\Gamma_i'))\), thus the Lifting Criterion guarantees that \({\sigma_{i+1}}_\ast\circ {p_{i+1}}_\ast'\)
lifts to some \(\sigma_{i+1}'\) so that
\[
\begin{tikzcd}
\Gamma_{i+1}'\arrow[r,"\sigma_{i+1}'"]\arrow[d,"p_{i+1}'"]&\Gamma_i'\arrow[d,"p_i'"]\\
\Gamma_{i+1}\arrow[r,"\sigma_{i+1}"]&\Gamma_i
\end{tikzcd}.
\]
commutes.
\end{proof}
\end{prop}

\noindent The next three statements give exact criteria on distinguishing finite, regular covers up to isomorphism.

\begin{prop}[Covers from the same path space]
\label{prop:isomorphic-covers}
Any two paths in the same path space of \(\hat{\pi}_1(\Omega)\) yield isomorphic covers.
\begin{proof}
  Let \(\{\sigma_{i+1}':\Gamma_{i+1}'\rightarrow\Gamma_i'\}_{i\in\mathbb{N}_{\geq n}}\) and
  \(\{\sigma_{i+1}'':\Gamma_{i+1}''\rightarrow\Gamma_i''\}_{i\in\mathbb{N}_{\geq n}}\)
  be covers from two paths in the same path space, with maps \(\{p_i'\}_{i\in\mathbb{N}_{\geq n}}\)
  and \(\{p_i''\}_{i\in\mathbb{N}_{\geq n}}\), respectively. The path space is a tree, thus the
  two paths are tail equivalent, say at some \(m\in\mathbb{N}\). Then for all
  \(i\in\mathbb{N}_{\geq m}\), \({p_i}_\ast'(\pi_1(\Gamma_i'))={p_i}_\ast''(\pi_1(\Gamma_i''))\),
  and \(\Gamma_i'\) and \(\Gamma_i''\) are isomorphic. This induces an isomorphism between
  \(\varprojlim_{\sigma_i'}\Gamma_i'\) and \(\varprojlim_{\sigma_i''}\Gamma_i''\).
\end{proof}
\end{prop}

\begin{prop}[Different path spaces correspond nonisomorphic covers]
\label{prop:nonisomorphic-covers}
Any two covers associated to paths in different path spaces of \(\hat{\pi}_1(\Omega)\) are not isomorphic.
\begin{proof}
  Let \(\{\sigma_{i+1}':\Gamma_{i+1}'\rightarrow\Gamma_i'\}_{i\in\mathbb{N}_{\geq n}}\) and
  \(\{\sigma_{i+1}'':\Gamma_{i+1}''\rightarrow\Gamma_i''\}_{i\in\mathbb{N}_{\geq n}}\)
  be covers from two paths in different path spaces, with maps \(\{p_i'\}_{i\in\mathbb{N}_{\geq n}}\)
  and \(\{p_i''\}_{i\in\mathbb{N}_{\geq n}}\), respectively.
\end{proof}
\end{prop}

\noindent Thus the paths of \(\hat{\pi}_1(\Omega)\) associated to elements of
\(\varinjlim_{{\sigma_i}_\ast}\textnormal{Sur}(\pi_1(\Gamma_i),G)/\textnormal{Aut}(G)\)
can be thought of as representatives of equivalence classes of path spaces in \(\hat{\pi}_1(\Omega)\).

\begin{cor}[Criterion for isomorphic covers]
\label{cor:isomorphic-cover-criterion}
If each \(\Gamma_i=\Gamma\) and \(\sigma_i=\sigma\), then finite, regular covers that arise from periodic
\(N\trianglelefteq\pi_1(\Gamma)\) are isomorphic if and only if their corresponding periodic paths share the same root.
\end{cor}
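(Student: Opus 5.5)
We combine Proposition \ref{prop:isomorphic-covers} and Proposition \ref{prop:nonisomorphic-covers} with the structure of the path spaces in the substitutive case from Proposition \ref{prop:elements-projlim}. Since \(\Gamma_i=\Gamma\) and \(\sigma_i=\sigma\), every normal subgroup \(N\trianglelefteq\pi_1(\Gamma)\) of finite index is eventually periodic by Proposition \ref{prop:stabilization}. Its path space therefore contains a periodic path rooted at level \(0\). The task is to show that this periodic root is a complete invariant of the path space, and then to transfer the statement to covers.

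\emph{Step 1 (sufficiency).} Suppose two periodic normal subgroups \(N,N'\) share the same root, so \(N=N'\) at level \(0\). Their paths \(\{\sigma_\ast^{-i}(N)\}\) and \(\{\sigma_\ast^{-i}(N')\}\) then coincide at every level. Proposition \ref{prop:isomorphic-covers} gives that the covers built in Proposition \ref{prop:construct-covers} are isomorphic; in fact the approximants \(\Gamma_i'\) and \(\Gamma_i''\) already agree at every level.

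\emph{Step 2 (necessity).} Suppose the covers are isomorphic. By Proposition \ref{prop:nonisomorphic-covers}, the two periodic paths lie in the same path space, and since that space is a tree they are tail equivalent. So there is an \(m\) with \(\sigma_\ast^{-i}(N)=\sigma_\ast^{-i}(N')\) for all \(i\ge m\). Let \(\ell\) and \(\ell'\) be the periods of \(N\) and \(N'\). Periodicity with \(m=0\) means \(\sigma_\ast^{-k\ell}(N)=N\) and \(\sigma_\ast^{-k\ell'}(N')=N'\) for all \(k\). Choose a common multiple \(L\) of \(\ell\) and \(\ell'\) with \(L\ge m\). Then
\[
N=\sigma_\ast^{-L}(N)=\sigma_\ast^{-L}(N')=N',
\]
so the roots coincide.

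\emph{Main obstacle.} The only delicate point is the precise meaning of ``periodic'': we need the subgroups themselves to recur, not merely their quotients up to isomorphism, as the earlier remark warns. We also need the minimal periodic representative of Proposition \ref{prop:elements-projlim} to be the root at level \(0\) of the periodic path, so that equality at large levels propagates back to level \(0\). With the definition given, where \(m=0\) and \(N_0=N_{i\ell}\) as literal subgroups, the common-multiple argument above settles this. The remaining checks are mild: Proposition \ref{prop:nonisomorphic-covers} is used only in its contrapositive form, and ``isomorphic'' means isomorphic as covers over \(\Omega\).
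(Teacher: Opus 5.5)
Your proposal is correct and follows the route the paper intends. The corollary is stated without a separate proof, as a consequence of Propositions~\ref{prop:isomorphic-covers} and~\ref{prop:nonisomorphic-covers}, and your common-multiple argument is exactly what upgrades tail equivalence of two periodic paths rooted at level~$0$ to equality of their roots.

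The paper's proof of Proposition~\ref{prop:nonisomorphic-covers} is left unfinished, so note that along a periodic path all vertices have the same order, because the edges are injective and $N_0=N_\ell$. The path therefore defines an element of $\varinjlim\textnormal{Sur}(\pi_1(\Gamma),G)$, and the tail equivalence you need follows directly from Step~4 of the proof of Theorem~\ref{thm:covers}.
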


\begin{remark}
  In particular, if each \(\Gamma_i=\Gamma\) and \(\sigma_i=\sigma\), and \(N\trianglelefteq\pi_1(\Gamma)\)
  is periodic of period \(m\in\mathbb{N}_{\geq 2}\), then the covers corresponding to \(N\) and \(\sigma_\ast^{-1}(N)\)
  are not isomorphic.
\end{remark}

\indent Finally, we construct a map \(\hat{\pi}_1(\Omega)\rightarrow\pi_1^\textnormal{et}(\Omega)\),
and obtain an exact criterion for finite deck transformation groups.

\begin{theorem}[Theorem \ref{thm:same}]
\label{thm:profinite-to-etale-isomorphism}
\(\hat{\pi}_1(\Omega)\cong\pi_1^\textnormal{et}(\Omega)\). In particular, if each \(\Gamma_i=\Gamma\) and \(\sigma_i=\sigma\),
then \(\hat{\pi}_1(\Omega)\cong\hat{\pi}_1^\sigma(\Gamma)\cong(\pi_1^\textnormal{et})^\sigma(\Gamma)\cong\pi_1^\textnormal{et}(\Omega)\),
where \(\hat{\pi}_1^\sigma(\Gamma)\) and \((\pi_1^\textnormal{et})^\sigma(\Gamma)\) are the
\(\sigma\)-equivariant profinite/\'etale fundamental groups of \(\Gamma\).
\begin{proof}
We first construct a well-defined map \(\hat{\pi}_1(\Omega)\rightarrow\pi_1^\textnormal{et}(\Omega)\).\\
\indent By Proposition \ref{prop:elements-projlim}, an element in \(\hat{g}\in\hat{\pi}_1(\Omega)\)
is supported on a collection of tail equivalence classes, each represented by some minimal path.
By Proposition \ref{prop:construct-covers}, each path gives a finite, regular cover
\(\Omega'=\varprojlim_{\sigma_i'}\Gamma_i'\) of \(\Omega\), with the associated deck transformation group
given by the inverse limit of quotients of \(\pi_1(\Gamma_i)\) associated to the path, and the projection
of \(\hat{g}\) to this path gives the corresponding deck transformation. Since all paths in the same
tail equivalence class are isomorphic by Proposition \ref{prop:isomorphic-covers}, they are all
associated to the same deck transformation group, and therefore \(\hat{g}\) projected to each path
gives the same deck transformation. The associated deck transformation of the cover is unique by
Proposition \ref{prop:nonisomorphic-covers}, and is independent of the representative in the
isomorphism class of covers of \(\Omega'\) by Theorem \ref{thm:covers}, thus we obtain a well-defined map
\(\hat{\pi}_1(\Omega)\rightarrow\textnormal{Deck}(\Omega'/\Omega)\).\\
\indent Finally, let \(\Omega''\) be another finite, regular cover of \(\Omega\) such that
the diagram
\[
\begin{tikzcd}
\Omega''\arrow[rr]\arrow[rd]&&\Omega'\arrow[ld]\\
&\Omega&
\end{tikzcd}
\]
commutes. By Corollary \ref{cor:induced-map-deck-transformation}, there is a map
\(\textnormal{Deck}(\Omega''/\Omega)\rightarrow\textnormal{Deck}(\Omega'/\Omega)\). By Theorem \ref{thm:covers}
and Proposition \ref{prop:subcovers}, \(\Omega''\) can be written as an inverse limit
\(\varprojlim_{\sigma_i''}\Gamma_i''\) that eventually commutes with projections to the respective approximants
and bonding maps \(\sigma_i\) and \(\sigma_i'\), so that, for sufficiently large \(n\in\mathbb{N}\), for all
\(i\in\mathbb{N}_{\geq n}\), \(\textnormal{Deck}(\Omega'/\Omega)\cong\textnormal{Deck}(\Gamma_i'/\Gamma_i)\)
and \(\textnormal{Deck}(\Omega''/\Omega)\cong\textnormal{Deck}(\Gamma_i''/\Gamma_i)\). By construction,
\[
\begin{tikzcd}
&\textnormal{Deck}(\Gamma_i''/\Gamma_i)\arrow[dd]\\
\hat{\pi}_1(\Omega)\arrow[ru]\arrow[rd]&\\
&\textnormal{Deck}(\Gamma_i'/\Gamma_i)
\end{tikzcd}
\]
commutes, therefore
\[
\begin{tikzcd}
&\textnormal{Deck}(\Omega''/\Omega)\arrow[dd]\\
\hat{\pi}_1(\Omega)\arrow[ru]\arrow[rd]&\\
&\textnormal{Deck}(\Omega'/\Omega)
\end{tikzcd}
\]
also commutes, and the Universal Property of Inverse Limits yields a unique map
\(\hat{\pi}_1(\Omega)\rightarrow\pi_1^\textnormal{et}(\Omega)\).\\
\indent For injectivity, suppose that \(\hat{g}\in\hat{\pi}_1(\Omega)\) with \(\hat{g}\mapsto 1\).
Then, by Theorem \ref{thm:covers} and Proposition \ref{prop:profinite-deck-transformation-isomorphic},
given any finite, regular cover \(\Omega'=\varprojlim_{\sigma_i'}\Gamma_i'\) of \(\Omega\),
for sufficiently large \(n\in\mathbb{N}\), for all
\(i\in\mathbb{N}_{\geq n}\), \(\textnormal{Deck}(\Omega'/\Omega)\cong\textnormal{Deck}(\Gamma_i'/\Gamma_i)\).
By assumption, the corresponding deck transformation in \(\textnormal{Deck}(\Omega'/\Omega)\) is \(1\),
thus as are, for each \(i\in\mathbb{N}_{\geq n}\), the corresponding deck transformations in each
\(\textnormal{Deck}(\Gamma_i'/\Gamma_i)\). Since this holds for all finite, regular covers of \(\Omega\), \(\hat{g}=1\).\\
\indent For surjectivity, due to the Universal Property of Inverse Limits, it suffices to show that,
for all finite, regular covers \(\Omega'\) of \(\Omega\), the composition
\(\hat{\pi}_1(\Omega)\rightarrow\pi_1^\textnormal{et}(\Omega)\twoheadrightarrow\textnormal{Deck}(\Omega'/\Omega)\)
is surjective. As before, by Theorem \ref{thm:covers} and Proposition \ref{prop:profinite-deck-transformation-isomorphic},
we can write \(\Omega'=\varprojlim_{\sigma_i'}\Gamma_i'\) of \(\Omega\), so that for sufficiently large
\(n\in\mathbb{N}\) and for all \(i\in\mathbb{N}_{\geq n}\), \(\textnormal{Deck}(\Gamma_i'/\Gamma_i)\cong\textnormal{Deck}(\Omega'/\Omega)\).
The surjection \(\hat{\pi}_i(\Omega)\rightarrow\textnormal{Deck}(\Gamma_i'/\Gamma_i)\) then induces a surjection
\(\hat{\pi}_1(\Omega)\rightarrow\textnormal{Deck}(\Omega'/\Omega)\).\\
\indent The case when each \(\Gamma_i=\Gamma\) and \(\sigma_i=\sigma\) is just restricting the paths of
\(\hat{\pi_1}(\Omega)\) to periodic ones using Proposition \ref{prop:elements-projlim}.
\end{proof}
\end{theorem}

\begin{cor}
\(\hat{\pi}_1(\Omega)\) is a topological invariant.
\end{cor}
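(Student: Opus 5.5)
The plan is to deduce the corollary from Theorem \ref{thm:profinite-to-etale-isomorphism}, together with the observation that the \'etale fundamental group is defined purely topologically. Suppose $\Omega$ is presented in two ways as an inverse limit, $\Omega\cong\varprojlim_{\sigma_i}\Gamma_i$ and $\Omega\cong\varprojlim_{\tau_j}\Lambda_j$ (for instance via Anderson--Putnam, G\"ahler or Barge--Diamond approximants). More generally, suppose $h:\Omega\to\Omega^\ast$ is a homeomorphism onto another tiling space $\Omega^\ast=\varprojlim\Lambda_j$. We must show that the two groups $\varprojlim\hat\pi_1(\Gamma_i)$ and $\varprojlim\hat\pi_1(\Lambda_j)$ are isomorphic.

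\textbf{Step 1: $\pi_1^\textnormal{et}$ is a homeomorphism invariant.} A homeomorphism $h:\Omega\to\Omega^\ast$ sends finite regular covers to finite regular covers. If $p:X'\to\Omega$ is such a cover, then $h\circ p:X'\to\Omega^\ast$ is one. Deck transformations are unchanged, since they are automorphisms of $X'$ commuting with $p$, equivalently with $h\circ p$. Factorizations $X''\to X'\to\Omega$ are also preserved, so the induced maps $\textnormal{Deck}(X''/\Omega)\to\textnormal{Deck}(X'/\Omega)$ are preserved. Thus $h$ gives an isomorphism of the inverse systems that define $\pi_1^\textnormal{et}$, hence $\pi_1^\textnormal{et}(\Omega)\cong\pi_1^\textnormal{et}(\Omega^\ast)$. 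In particular, when $h=\mathrm{id}$ this shows that $\pi_1^\textnormal{et}(\Omega)$ does not refer to any choice of approximants.

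\textbf{Step 2: apply Theorem \ref{thm:profinite-to-etale-isomorphism} to each presentation.} Applied to the presentation by the $\Gamma_i$, it gives $\varprojlim\hat\pi_1(\Gamma_i)\cong\pi_1^\textnormal{et}(\Omega)$. Applied to the presentation by the $\Lambda_j$, it gives $\varprojlim\hat\pi_1(\Lambda_j)\cong\pi_1^\textnormal{et}(\Omega^\ast)$. Composing these two isomorphisms with the one from Step 1 proves the corollary.

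\textbf{Main obstacle.} The only delicate point is checking that the theorem really applies to both presentations. Its proof uses Theorem \ref{thm:covers}, and hence presentations in which points of $\Gamma_n$ determine patches on regions that grow to all of $\R^d$. So I would verify that every inverse-limit presentation under consideration has this property, or at least is related to the collared Anderson--Putnam tower by a cofinal interleaving. If a presentation does not obviously satisfy this, the fallback is to interleave the two towers directly: find indices with maps $\Gamma_{i_k}\to\Lambda_{j_k}\to\Gamma_{i_{k-1}}$ whose composites are the bonding maps up to homotopy. Applying $\hat\pi_1$ would then give mutually inverse maps of the two inverse limits, using only the functoriality from Proposition \ref{prop:universal-property-profinite-completion}. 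I expect the theorem route to suffice, with this interleaving kept as a backup.
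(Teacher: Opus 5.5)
Your proposal is essentially the paper's argument. The paper states the corollary as an immediate consequence of Theorem \ref{thm:profinite-to-etale-isomorphism} combined with the ``manifest'' topological invariance of $\pi_1^\textnormal{et}$, which is exactly your Steps 1 and 2 (your Step 1 just spells out that invariance), and your caveat about which presentations the theorem covers (those of the type used in Theorem \ref{thm:covers}) is left implicit in the paper, whose subsequent remark only mentions your interleaving fallback as ``eventual telescoping'' without carrying it out.
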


\begin{remark}
  One can also attempt to construct the inverse map \(\pi_1^\textnormal{et}(\Omega)\rightarrow\hat{\pi}_1(\Omega)\).
  However, a priori, it is unclear that this map is independent of the choice of the specific inverse limit
  structure of \(\Omega\), and therefore is well-defined. One would then need to first explain the connection
  between \(\hat{\pi}_1(\Omega)\) of different inverse limit structures and show that it is a topological invariant,
  thus independent of the choice of inverse limit structures, whereas here, we obtain topological invariance as a consequence.\\
  \indent It is reasonable that this connection between different inverse limit structures is given by \emph{eventual telescoping}.
  That is, up to forgiving a finite initial chunk of two inverse limit structures, their corresponding Borel--Bratteli diagrams
  will be telescope equivalent.
\end{remark}

\begin{remark}
  From Theorem \ref{thm:covers}, one imagines \(\pi_1^\textnormal{et}(\Omega)\) as defined from two inverse limits,
  the first from writing deck transformations of \(\Omega\) as an inverse limit (which can be written as a direct limit
  from Theorem \ref{thm:covers} using Proposition \ref{prop:profinite-deck-transformation-isomorphic}),
  the second from the induced map on deck transformations between finite, regular subcovers.
  \(\hat{\pi}_1(\Omega)\) is also defined using two inverse limits, the first from the induced map on deck
  transformations between finite, regular subcovers of deck transformations, the second from writing deck
  transformations of \(\Omega\) as an inverse limit. Thus the moral proof of Theorem \ref{thm:profinite-to-etale-isomorphism}
  is that all of the squares of the corresponding commutative diagram formed from both inverse limits commute.
  Unfortunately, one can show using the Fibonacci tiling space that these squares are not all of the same fixed size. In fact,
  the sizes of the squares may be unbounded as one moves up in indices along either inverse limit!
\end{remark}

\begin{remark}
  While \(\pi_1^\textnormal{et}(\Omega)\) and \(\hat{\pi}_1(\Omega)\) are isomorphic, the elements of \(\hat{\pi}_1(\Omega)\)
  yield slightly more information than \(\pi_1^\textnormal{et}(\Omega)\), in that the paths within the same path space
  recover the possible covers, not just as topological spaces, but with the allowed inverse limit structures.
\end{remark}

\begin{cor}[Criterion for a finite deck transformation group]
\label{cor:deck-transformation-criterion}
If a finite group \(G\) is a deck transformation group, then there is some \(i\in\mathbb{N}\) and
\(N\trianglelefteq G\) so that \(G=\pi_1(\Gamma_i)/N\). The converse is true if there exists an
\(n\in\mathbb{N}\) such that for all \(i\in\mathbb{N}_{\geq n}\), each
\({{\sigma_{i+1}}_\ast}_N:\pi_1(\Gamma_{i+1})/{\sigma_{i+1}}_\ast^{-1}(N)\rightarrow\pi_1(\Gamma_i)/N\) is an isomorphism.\\
\indent In particular, if each \(\Gamma_i=\Gamma\) and \(\sigma_i=\sigma\), then all finite deck transformation groups are
of the form \(\pi_1(\Gamma)/N\) so that \({\sigma_\ast}_N:\pi_1(\Gamma)/\sigma_\ast^{-1}(N)\rightarrow\pi_1(\Gamma)/N\) is an isomorphism.
\end{cor}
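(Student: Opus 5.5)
The plan is to read the corollary off Theorem \ref{thm:covers} and Proposition \ref{prop:profinite-deck-transformation-isomorphic}. (In the statement, ``\(N\trianglelefteq G\)'' should be read as \(N\trianglelefteq\pi_1(\Gamma_i)\).)

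For the forward direction, let \(G\) be the deck transformation group of a finite, regular cover \(\Omega'\to\Omega\). Theorem \ref{thm:covers} represents this cover by an element of \(\varinjlim\textnormal{Sur}(\pi_1(\Gamma_i),G)/\textnormal{Aut}(G)\). This element is given by a surjection \(\phi_i:\pi_1(\Gamma_i)\to G\) for some \(i\). Set \(N=\ker\phi_i\). This is a finite-index normal subgroup of \(\pi_1(\Gamma_i)\), and the First Isomorphism Theorem gives \(G\cong\pi_1(\Gamma_i)/N\), which is the required form.

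For the converse, suppose \(G\cong\pi_1(\Gamma_i)/N\), and that the maps \({{\sigma_{j+1}}_\ast}_{N_j}\) along the path rooted at \(N\) are isomorphisms from some level on.

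\begin{enumerate}
\item Fix \(n\ge i\) with this property. Define \(N_j\) for \(j\ge n\) by pulling \(N\) back along the bonding maps.
\item Let \(\phi_n\) be the composite \(\pi_1(\Gamma_n)\to\pi_1(\Gamma_n)/N_n\cong G\). Here \(\pi_1(\Gamma_n)/N_n\cong G\) because all edges from level \(i\) to level \(n\) are injective (Proposition \ref{prop:stabilization}) and, by hypothesis, eventually bijective.
\item Define \(\phi_{j+1}=\phi_j\circ{\sigma_{j+1}}_\ast\). Its image equals the image of \({{\sigma_{j+1}}_\ast}_{N_j}\), composed with the identification \(\pi_1(\Gamma_j)/N_j\cong G\). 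Since that edge map is an isomorphism, each \(\phi_j\) with \(j\ge n\) is a surjection.
\item Step 1 of the proof of Theorem \ref{thm:covers} then builds a connected regular cover with deck group \(G\). Equivalently, one can apply Proposition \ref{prop:construct-covers} to this path and note that its deck group is the constant vertex \(G\).
\end{enumerate}

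The one point needing care is the indexing: the hypothesis must be applied to the entire path \(N_j\), not to a single fixed \(N\).

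For the substitution case, take \(\Gamma_i=\Gamma\) and \(\sigma_i=\sigma\). By Proposition \ref{prop:elements-projlim}, every tail class contains a periodic path rooted at level \(0\). Replace \(N\) by the periodic representative \(M\) produced there. Along a periodic path the vertex orders are nonincreasing (Proposition \ref{prop:stabilization}) and return to their starting value after one period, so they are constant. The edge maps are injective maps between finite groups of equal order, hence isomorphisms. In particular \({\sigma_\ast}_M\) is an isomorphism, and \(G\cong\pi_1(\Gamma)/M\).

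The main obstacle I expect is the forward direction in this substitution case. The kernel \(N\) given by Theorem \ref{thm:covers} need not satisfy \({\sigma_\ast}_N\) being an isomorphism on the nose. The fix is to show that passing to the periodic representative \(M\) leaves the quotient \(G\) unchanged. This holds because every vertex of a path representing a \(G\)-cover is eventually isomorphic to \(G\), and \(M\) sits on that same tail.
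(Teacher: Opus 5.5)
\textbf{The gap: step 2 of your converse.} You fix $n\ge i$ such that the edges along the path rooted at $N$ are isomorphisms from level $n$ on. You then assert $\pi_1(\Gamma_n)/N_n\cong G$, because the edges from level $i$ to level $n$ are injective and ``eventually bijective.''

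Eventual bijectivity says nothing about the edges between levels $i$ and $n$. Those edges are merely injective, so $\pi_1(\Gamma_n)/N_n$ is only isomorphic to a subgroup of $G=\pi_1(\Gamma_i)/N$, possibly a proper one.

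Worse, under your reading the hypothesis is vacuous. Proposition \ref{prop:stabilization} shows that along \emph{every} path the vertex orders stabilize, so the edges are isomorphisms from some level on. Your argument would therefore prove that every finite quotient of every $\pi_1(\Gamma_i)$ is a deck group. The paper's own example refutes this. For $\Omega_\textnormal{PD}$ there are surjections $\phi:\mathbb{F}_2\twoheadrightarrow S_3$, but $\phi\circ\sigma_\ast^2$ is never surjective. So the vertices of the path rooted at $\ker\phi$ drop to a proper subgroup of $S_3$ by level $2$, and $\Omega_\textnormal{PD}$ has no $S_3$-cover.

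\textbf{The fix.} The hypothesis has to be read as requiring the edge maps to be isomorphisms along the path from its root. Take $n=i$; equivalently, $G\cong\pi_1(\Gamma_j)/N_j$ at every level $j$ of the path from the root onward. Then step 2 is immediate and your steps 3 and 4 go through unchanged. The hypothesis now says precisely that the quotient map $\pi_1(\Gamma_i)\to G$, and each of its pullbacks, is surjective. That is, it defines an element of $\dlim\textnormal{Sur}(\pi_1(\Gamma_i),G)$, so Theorem \ref{thm:covers} applies directly. You did flag one indexing issue (using the whole path $N_j$ rather than a fixed $N$), but not this one, which is the one that matters.

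\textbf{The other parts.} Your forward direction is sound, and so is your substitution-case argument. The obstacle you anticipate in the substitution case does not actually arise. Theorem \ref{thm:covers} supplies a representative $\phi_n$ with every $\phi_m$, $m\ge n$, surjective. For $N=\ker\phi_n$, both $\pi_1(\Gamma)/\sigma_\ast^{-1}(N)=\pi_1(\Gamma)/\ker\phi_{n+1}$ and $\pi_1(\Gamma)/N$ then have order $|G|$. So the injective map ${\sigma_\ast}_N$ is already an isomorphism. Passing to a periodic representative is correct but unnecessary.
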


\noindent Although instructive, we leave the example of the dyadic solenoid as an exercise as it is not a tiling space,
and instead illustrate the presentation of elements in \(\hat{\pi}_1(\Omega)\) and demonstrate the identification between
\(\pi_1^\textnormal{et}(\Omega)\) and \(\hat{\pi}_1(\Omega)\) for the Fibonacci tiling space in the next section.

\section{The Fibonacci tiling}\label{sec:Fib}

In this section we study the Fibonacci tiling obtained from the substitution $\sigma_\textnormal{Fib}(a)=ab$, $\sigma_\textnormal{Fib}(b)=a$
and prove Theorem \ref{thm:Fib}. In Subsection \ref{subsec:Fib1} we show that $\hat \pi_1(\Omega_\textnormal{Fib})=\hat{\mathbb{F}}_2$ 
and that there are covers for any 2-generator group $G$ corresponding to any ordered pair of generators. 
By Theorem \ref{thm:substitution-covers}, these covers are all substitution tilings and we show how
to construct $\tilde \sigma$ from a power of $\sigma_\textnormal{Fib}$. 
In Subsection \ref{subsec:Fib3} we examine the spectral properties of the
covers $\tilde \Omega$ and show that the spectrum is pure point if and only if the group $G$ is abelian. 

\subsection{Classifying covers of Fibonacci} \label{subsec:Fib1} Using Barge--Diamond collaring, we can express 
$\Omega_\textnormal{Fib}$ as the inverse limit of the first complex $\Gamma^\textnormal{BD}$ shown in Figure \ref{fig:Fibapproximants}. This 
is homotopy equivalent to the second approximant $\Gamma^\textnormal{AP}$. For calculations involving $\pi_1$, which is 
an invariant of homotopy type, we can do all of our calculations using the simpler space $\Gamma^\textnormal{AP}$. 

\begin{figure}[t]
\centering
\begin{tikzpicture}
\draw[->](-1.1250,.1005)arc[x radius=.75,y radius=.75,start angle=-60,end angle=90];
\draw(-1.5000,1.5000)arc[x radius=.75,y radius=.75,start angle=90,end angle=240];
\draw[->](-1.2682,-.0621)arc[x radius=0.4635,y radius=0.4635,start angle=60,end angle=-90];
\draw(-1.5000,-.9270)arc[x radius=0.4635,y radius=0.4635,start angle=270,end angle=120];
\draw(-1.1250,.1005)--(-1.7317,-.0621);
\fill[white](-1.5000,0)circle[radius=0.1];
\draw(-1.8750,.1005)--(-1.2682,-.0621);
\draw(-1.8750,.1005)--(-1.1250,.1005);
\draw(-1.5000,1.5000)node[above]{\(a\)};
\draw(-1.5000,-.9270)node[below]{\(b\)};
\draw(-1.5000,2.2500)node{\(\Gamma^{\textnormal{BD}}\)};
\draw[->](1.5000,0)arc[x radius=.75,y radius=.75,start angle=-90,end angle=90];
\draw(1.5000,1.5000)arc[x radius=.75,y radius=.75,start angle=90,end angle=270];
\draw[->](1.5000,0)arc[x radius=0.4635,y radius=0.4635,start angle=90,end angle=-90];
\draw(1.5000,-.9270)arc[x radius=0.4635,y radius=0.4635,start angle=270,end angle=90];
\draw(1.5000,1.5000)node[above]{\(a\)};
\draw(1.5000,-.9270)node[below]{\(b\)};
\draw(1.5000,2.2500)node{\(\Gamma^{\textnormal{AP}}\)};
\end{tikzpicture}
\caption{The Barge--Diamond and Anderson--Putnam complexes for $\Omega_\textnormal{Fib}$.
  Since these approximants are homotopy equivalent, we can do our calculations with either one. }
\label{fig:Fibapproximants}
\end{figure}%

The fundamental group of $\Gamma^\textnormal{AP}$ is the free group $\mathbb{F}_2$, with generators $a$ and $b$ corresponding to
the two loops. Substitution sends $a$ to $ab$ and $b$ to $a$, which is an isomorphism on $\mathbb{F}_2$. This induces an
isomorphism on $\hat{\mathbb{F}}_2$, so 
\[ \hat \pi_1(\Omega_\textnormal{Fib}) = \ilim \hat{\mathbb{F}}_2 = \hat{\mathbb{F}}_2. \]
Likewise, 
\[ \dlim \textnormal{Sur}(\pi_1(\Gamma^\textnormal{AP}), G) = \dlim \textnormal{Sur}(\mathbb{F}_2, G) = \textnormal{Sur}(\mathbb{F}_2, G). \]
By Theorem \ref{thm:covers}, we have a regular cover of $\Omega_\textnormal{Fib}$ with group $G$ for every surjection 
$\mathbb{F}_2 \to G$, in other words for every ordered pair of generators of $G$. (In particular, $G$ can be any finite
group with up to two generators.) Two such covers are equivalent if and
only if they are related by an automorphism of $G$. 

Specifically, if $G$ is any finite group with generators $(g_1, g_2)$, we can build a covering space
from the following rules: 
\begin{itemize}
\item If a letter $a$ has subscript $g$, the next letter has subscript $gg_1$. 
\item If a letter $b$ has subscript $g$, the next letter has subscript $gg_2$. 
\end{itemize}
That is, the cover is build from the map $\phi$ sending $a$ to $g_1$ and $b$ to $g_2$. 

\begin{example}[Double covers]\label{ex:Fib-double}
There are exactly three covers with group $G=\Z/2\Z = \{0,1\}$ (with additive notation), namely:
\begin{itemize}
\item $\tilde \Omega_1$, with $(g_1,g_2) = (1,0)$, 
\item $\tilde \Omega_2$, with $(g_1,g_2) = (0, 1)$, and 
\item $\tilde \Omega_3$, with $(g_1,g_2) = (1, 1)$.
\end{itemize} 
There are no nontrivial automorphisms of $\Z/2\Z$, so these covers are all inequivalent.  

Since $G$ is abelian, we can also view this as a cohomology problem:
\[ \check H^1(\Omega_\textnormal{Fib}, \Z/2\Z) = \dlim H^1(\Gamma^\textnormal{AP},\Z/2\Z) = H^1(\Gamma^\textnormal{AP},\Z/2\Z) = (\Z/2\Z)^2\]
$(\Z/2\Z)^2$ has three non-trivial elements, each of which defines a double cover of $\Omega_\textnormal{Fib}$, and a
trivial element that defines a single cover. 

Let $p_i$ (with $i=1$, 2, or 3) be the projection of $\tilde \Omega_{i}$ to $\Omega_\textnormal{Fib}$
and let $\phi_i$ be the corresponding map from $\F_2$ to $\Z/2\Z$. Note that $\phi_1\circ \sigma_\textnormal{Fib} = \phi_3$,
since $\phi_1\circ \sigma_\textnormal{Fib}(a) = \phi_1(ab)=1$ and $\phi_1\circ \sigma_\textnormal{Fib}(b) = \phi_1(a)=1$. Similarly, 
$\phi_1 \circ \sigma_\textnormal{Fib}^2 = \phi_2$ and $\phi_1 \circ \sigma_\textnormal{Fib}^3 = \phi_1$. This means that there are
maps $\tilde \Omega_1 \to \tilde \Omega_2 \to \tilde \Omega_3 \to \tilde \Omega_1 $ that cover the substitution map
$\sigma_\textnormal{Fib}$ on $\Omega$. It also means that there are maps $\tilde \sigma_i$ 
from each $\tilde \Omega_i$ to itself that cover $\sigma_\textnormal{Fib}^3$.

To get those maps, we start with 
\[ \sigma_\textnormal{Fib}^3(a) = abaab, \qquad \sigma_\textnormal{Fib}^3(b)=aba \]
and add subscripts. For $\tilde \sigma_i$, we begin each 
supertile with the same subscript as its parent and 
use the following rules for $p_i$ to compute the rest of the subscripts:
\begin{eqnarray*} & \tilde \sigma_1(a_0) = a_0b_1a_1a_0b_1, & \tilde \sigma_1(a_1)=a_1b_0a_0a_1b_0, \cr 
& \tilde \sigma_1(b_0) = a_0b_1a_1, & \tilde \sigma_1(b_1) = a_1b_0a_0, \cr \cr 
& \tilde \sigma_2(a_0) = a_0b_0a_1a_1b_1, & \tilde \sigma_2(a_1)=a_1b_1a_0a_0b_0, \cr 
& \tilde \sigma_2(b_0) = a_0b_0a_1, & \tilde \sigma_2(b_1) = a_1b_1a_0, \cr \cr 
& \tilde \sigma_3(a_0) = a_0b_1a_0a_1b_0, & \tilde \sigma_3(a_1)=a_1b_0a_1a_0b_1, \cr 
& \tilde \sigma_3(b_0) = a_0b_1a_0, & \tilde \sigma_3(b_1) = a_1b_0a_1.
\end{eqnarray*}
\end{example} 

\begin{example}[Triple covers]\label{ex:Fib-triple} When $G=\Z/3\Z$, there are eight non-trivial choices for $(g_1, g_2)$.
Multiplication by $-1=2$ is an automorphism on $G$, so these eight covers only
represent four equivalence classes. $\left (\begin{smallmatrix} 1&1 \cr 1&0 \end{smallmatrix} \right )$ has order
8 in $GL_2(\Z/3\Z)$, but the fourth power is $\left (\begin{smallmatrix} 2&0 \cr 0&2 \end{smallmatrix} \right )$,
which implements the automorphism on $(\Z/3\Z)^2$. If we wanted, we could ignore the automorphism and construct 
the substitution rules for our covers from $\sigma_\textnormal{Fib}^8$ and our following rules, exactly as before, 
but it is simpler to use $\sigma_\textnormal{Fib}^4$ and the automorphism, for instance 
\be \label{eq:shortcut} a_g \to a_{2g}b_{2g+1}a_{2g+1}a_{2g+2}b_{2g}a_{2g}b_{2g+1}a_{2g+1}, \qquad 
b_g \to a_{2g}b_{2g+1}a_{2g+1}a_{2g+2}b_{2g} \ee
for the cover with $(g_1,g_2)=(1,0)$. 

More generally, suppose we have a $G$-covering map $p$ generated by a map $\phi: \F_2 \to G$ and that
$p \circ \sigma^n$ is equivalent to $A \circ \phi =  \phi \circ \sigma_\textnormal{Fib}^n$.
We then define the supertiles $\tilde \sigma(a_g)$
and $\tilde \sigma(b_g)$ by writing $\sigma_\textnormal{Fib}^k$ of $a$ or $b$, setting the subscript of the first letter
equal to $A(g)$, and using the following rules from $\phi$ to determine the remaining letters. 

\end{example}

\begin{example}[A non-abelian cover]\label{ex:non-abelian} 
Let $G=S_3$ be the group of permutations of 
$\{1,2,3\}$. Up to conjugation, there are three ways to choose a pair of generators, 
depending on whether each generator is odd (an interchange of two elements) or even
(a cyclic permutation of three elements). If $a$ and $b$ map to distinct odd exchanges, 
then $\sigma(b)=a$ maps to an exchange, $\sigma^2(b)=\sigma(a)=ab$ maps to a cyclic permutation, 
$\sigma^3(b)=\sigma^2(a)=aba$ maps to an exchange, and $\sigma^3(a)=abaab=a$ maps to an exchange. That
is, 
\begin{eqnarray}
\phi_0(a,b) & = & (\hbox{exchange}, \hbox{exchange}), \cr 
\phi_0\circ \sigma(a,b) & = & (\hbox{cyclic}, \hbox{exchange}), \cr 
\phi_0\circ \sigma^2(a,b) & = & (\hbox{exchange}, \hbox{cyclic}), \cr
\phi_0\circ \sigma^3(a,b) & = & (\hbox{exchange}, \hbox{exchange}).
\end{eqnarray}
Substitution permutes the three possibilities and any surjection 
$\phi_0: \mathbb{F}_2 \to G$ is equivalent to $\phi_0 \circ \sigma^3$, with
$\phi_0 \circ \sigma^3 = A \circ \phi_0$ for some automorphism $A$. 

We can thus understand
all $G$-covers of $\Omega_\textnormal{Fib}$ by studying the single case where $a$ and $b$ are sent
to distinct exchanges that, in an abuse of notation, we will denote $a$ and $b$. 
In this case, that autmorphism $A$ is conjugation by $\phi_0(a)$ and $\tilde \Omega$ is
generated by the substitution 
\begin{eqnarray} 
a_g & \mapsto & a_{aga} b_{ag} a_{agb} a_{agba} b_{agb}, \cr 
b_g & \mapsto & a_{aga} b_{ag} a_{agb}. 
\end{eqnarray} 
\end{example}

\subsection{The profinite perspective}\label{subsec:Fib3}

We revisit the double covers of Example \ref{ex:Fib-double} from the profinite perspective, examining
$\hat \pi_1(\Omega)$ and its index-2 quotients. Beginning with $\pi_1(\Gamma_0)=\F_2=\langle a,b \rangle$, 
we examine the index-2 quotient \(G_0=\left\langle a,b\left| a^2,ab\right.\right\rangle\). 
The associated path of quotients of $\pi_1(\Gamma_n)$, involves
\begin{align*}
G_1&=\left\langle a,b\left| \sigma_\ast^{-1}(a^2),\sigma_\ast^{-1}(ab)\right.\right\rangle=\left\langle a,b\left| b^2,bb^{-1}a\right.\right\rangle=\left\langle a,b\left| a,b^2\right.\right\rangle \\
G_2&=\left\langle a,b\left| \sigma_\ast^{-1}(a),\sigma_\ast^{-1}(b^2)\right.\right\rangle=\left\langle a,b\left| b,(b^{-1}a)^2\right.\right\rangle=\left\langle a,b\left| a^2,b\right.\right\rangle \\
G_3&=\left\langle a,b\left| \sigma_\ast^{-1}(a^2),\sigma_\ast^{-1}(b)\right.\right\rangle=\left\langle a,b\left| b^2,b^{-1}a\right.\right\rangle=\left\langle a,b\left| a^2,ab\right.\right\rangle = G_0,
\end{align*}
so the path is \(3\)-periodic, with all groups being index-\(2\) quotients of \(\pi_1(\Gamma_0)\). 

\begin{figure}[t]
\centering
\begin{tikzpicture}
\draw(-4.5,0)--(0.5,0);
\draw(0,-0.5)--(0,2.5);
\draw(0.5,0)node[right]{\(i\)};
\draw(0,-0.5)node[below]{\(n\)};
\draw(-4,-0.1)--(-4,0.1);
\draw(-4,-0.2)node[below]{\(4\)};
\draw(-3,-0.1)--(-3,0.1);
\draw(-3,-0.2)node[below]{\(3\)};
\draw(-2,-0.1)--(-2,0.1);
\draw(-2,-0.2)node[below]{\(2\)};
\draw(-1,-0.1)--(-1,0.1);
\draw(-1,-0.2)node[below]{\(1\)};
\draw(-0.1,0.5)--(0.1,0.5);
\draw(0.1,0.5)node[right]{\(1\)};
\draw(-0.1,1)--(0.1,1);
\draw(0.1,1)node[right]{\(\left\langle a,b\left| a^2,ab\right.\right\rangle\)};
\draw(-0.1,1.5)--(0.1,1.5);
\draw(0.1,1.5)node[right]{\(\left\langle a,b\left| a,b^2\right.\right\rangle\)};
\draw(-0.1,2)--(0.1,2);
\draw(0.1,2)node[right]{\(\left\langle a,b\left| a^2,b\right.\right\rangle\)};
\fill(0,0.5)circle[radius=0.05];
\fill(-1,0.5)circle[radius=0.05];
\fill(-2,0.5)circle[radius=0.05];
\fill(-3,0.5)circle[radius=0.05];
\fill(-4,0.5)circle[radius=0.05];
\fill(0,1)circle[radius=0.05];
\fill(-1,1)circle[radius=0.05];
\fill(-2,1)circle[radius=0.05];
\fill(-3,1)circle[radius=0.05];
\fill(-4,1)circle[radius=0.05];
\fill(0,1.5)circle[radius=0.05];
\fill(-1,1.5)circle[radius=0.05];
\fill(-2,1.5)circle[radius=0.05];
\fill(-3,1.5)circle[radius=0.05];
\fill(-4,1.5)circle[radius=0.05];
\fill(0,2)circle[radius=0.05];
\fill(-1,2)circle[radius=0.05];
\fill(-2,2)circle[radius=0.05];
\fill(-3,2)circle[radius=0.05];
\fill(-4,2)circle[radius=0.05];
\draw(-4,0.5)--(0,0.5);
\draw(-4,1)--(-3,2)--(-2,1.5)--(-1,1)--(0,2);
\draw(-4,1.5)--(-3,1)--(-2,2)--(-1,1.5)--(0,1);
\draw(-4,2)--(-3,1.5)--(-2,1)--(-1,2)--(0,1.5);
\end{tikzpicture}
\caption{Drawing the inverse limit horizontally and the deck transformation groups vertically, we see that the Fibonacci
  substitution has finite deck transformation groups forming Borel--Bratteli diagrams consisting of singleton path spaces,
  with each component (here four are drawn, \(1\), \(\left\langle a,b\left| a^2,ab\right.\right\rangle\),
  \(\left\langle a,b\left| a,b^2\right.\right\rangle\), and \(\left\langle a,b\left| a^2,b\right.\right\rangle\))
  giving rise to a distinct part of \(\hat{\pi}_1(\Omega)\), and each path is minimal.}
\label{fig:fibonacci}
\end{figure}%

The three quotients $G_i$ correspond to the three (regular) double covers of \(S^1\vee S^1\). The
double cover of $\Omega_\textnormal{Fib}$ defined by $G_0$ is thus associated with a sequence of 
double covers of the approximants $\Gamma_i$, as shown in Figure \ref{fig:cover-of-approximants}. If we
shift the diagram, we obtain the covers defined by $G_1$ and $G_2$. The three maps 
\begin{align*}
\sigma_1'(a_0)&=a_0b_1, &\sigma_2'(a_0)&=a_0b_0, &\sigma_3'(a_0)&=a_0b_1, \\
\sigma_1'(a_1)&=a_1b_0, &\sigma_2'(a_1)&=a_1b_1, &\sigma_3'(a_1)&=a_1b_0, \\
\sigma_1'(b_0)&=a_0, &\sigma_2'(b_0)&=a_0, &\sigma_3'(b_0)&=a_0, \\
\sigma_1'(b_1)&=a_1, &\sigma_2'(b_1)&=a_1, &\sigma_3'(b_1)&=a_1, 
\end{align*}
can be understood either as maps between approximants to the covering space $\tilde \Omega$
or as maps between different covering spaces $\tilde \Omega_i$ that cover the substitution map on 
$\Omega_\textnormal{Fib}$. The maps $\sigma_i$ of Example \ref{ex:Fib-double} that cover 
$\sigma_\textnormal{Fib}^3$ are essentially the compositions $\sigma_1'\circ \sigma_2' \circ \sigma_3'$,
$\sigma_2' \circ \sigma_3' \circ \sigma_1'$, and $\sigma_3' \circ \sigma_1' \circ \sigma_2'$.

\begin{figure}[t]
\centering
\begin{tikzpicture}
\draw(-2.0625,0)node{\(\cdots\)};
\draw[->](-1.6875,0)--(-.9375,0);
\draw[->](0,-1.5000)arc[x radius=.75,y radius=1.5,start angle=-90,end angle=90];
\draw[->](0,1.5000)arc[x radius=.75,y radius=1.5,start angle=90,end angle=270];
\draw[->](0,-.7500)arc[radius=.75,start angle=-90,end angle=90];
\draw[->](0,.7500)arc[radius=.75,start angle=90,end angle=270];
\fill(-.7500,0)circle[radius=0.05];
\draw(0,-.7500)node[above]{\(a_0\)};
\draw(0,-1.5000)node[above]{\(b_0\)};
\draw(0,.7500)node[below]{\(a_1\)};
\draw(0,1.5000)node[below]{\(b_1\)};
\draw[->](.9375,0)--(1.6875,0)node[midway,above]{\(\sigma_3'\)};
\draw[->](2.6250,-2.2500)arc[radius=.75,start angle=-90,end angle=270];
\draw[->](1.8750,0)arc[radius=.75,start angle=180,end angle=360];
\draw[->](3.3750,0)arc[radius=.75,start angle=0,end angle=180];
\draw[->](2.6250,2.2500)arc[radius=.75,start angle=90,end angle=450];
\fill(2.6250,-.7500)circle[radius=0.05];
\draw(3.3750,0)node[left]{\(a_0\)};
\draw(2.6250,-2.2500)node[above]{\(b_0\)};
\draw(1.8750,0)node[right]{\(a_1\)};
\draw(2.6250,2.2500)node[below]{\(b_1\)};
\draw[->](3.5625,0)--(4.3125,0)node[midway,above]{\(\sigma_2'\)};
\draw[->](5.2500,-2.2500)arc[radius=.75,start angle=-90,end angle=270];
\draw[->](4.5000,0)arc[radius=.75,start angle=180,end angle=360];
\draw[->](6.0000,0)arc[radius=.75,start angle=0,end angle=180];
\draw[->](5.2500,2.2500)arc[radius=.75,start angle=90,end angle=450];
\fill(5.2500,-.7500)circle[radius=0.05];
\draw(5.2500,-2.2500)node[above]{\(a_0\)};
\draw(6.0000,0)node[left]{\(b_0\)};
\draw(5.2500,2.2500)node[below]{\(a_1\)};
\draw(4.5000,0)node[right]{\(b_1\)};
\draw[->](6.1875,0)--(6.9375,0)node[midway,above]{\(\sigma_1'\)};
\draw[->](7.8750,-1.5000)arc[x radius=.75,y radius=1.5,start angle=-90,end angle=90];
\draw[->](7.8750,1.5000)arc[x radius=.75,y radius=1.5,start angle=90,end angle=270];
\draw[->](7.8750,-.7500)arc[radius=.75,start angle=-90,end angle=90];
\draw[->](7.8750,.7500)arc[radius=.75,start angle=90,end angle=270];
\fill(7.1250,0)circle[radius=0.05];
\draw(7.8750,-.7500)node[above]{\(a_0\)};
\draw(7.8750,-1.5000)node[above]{\(b_0\)};
\draw(7.8750,.7500)node[below]{\(a_1\)};
\draw(7.8750,1.5000)node[below]{\(b_1\)};
\draw[->](0,-1.6875)--(0,-3.1875);
\draw[->](2.6250,-2.4375)--(2.6250,-3.1875);
\draw[->](5.2500,-2.4375)--(5.2500,-3.1875);
\draw[->](7.8750,-1.6875)--(7.8750,-3.1875);
\draw(-2.0625,-4.8750)node{\(\cdots\)};
\draw[->](-1.6875,-4.8750)--(-.9375,-4.8750);
\draw[->](0,-3.3750)arc[radius=.75,start angle=90,end angle=450];
\draw[->](0,-6.3750)arc[radius=.75,start angle=-90,end angle=270];
\fill(0,-4.8750)circle[radius=0.05];
\draw(0,-6.3750)node[above]{\(a\)};
\draw(0,-3.3750)node[below]{\(b\)};
\draw[->](.9375,-4.8750)--(1.6875,-4.8750)node[midway,above]{\(\sigma\)};
\draw[->](2.6250,-3.3750)arc[radius=.75,start angle=90,end angle=450];
\draw[->](2.6250,-6.3750)arc[radius=.75,start angle=-90,end angle=270];
\fill(2.6250,-4.8750)circle[radius=0.05];
\draw(2.6250,-6.3750)node[above]{\(a\)};
\draw(2.6250,-3.3750)node[below]{\(b\)};
\draw[->](3.5625,-4.8750)--(4.3125,-4.8750)node[midway,above]{\(\sigma\)};
\draw[->](5.2500,-3.3750)arc[radius=.75,start angle=90,end angle=450];
\draw[->](5.2500,-6.3750)arc[radius=.75,start angle=-90,end angle=270];
\fill(5.2500,-4.8750)circle[radius=0.05];
\draw(5.2500,-6.3750)node[above]{\(a\)};
\draw(5.2500,-3.3750)node[below]{\(b\)};
\draw[->](6.1875,-4.8750)--(6.9375,-4.8750)node[midway,above]{\(\sigma\)};
\draw[->](7.8750,-3.3750)arc[radius=.75,start angle=90,end angle=450];
\draw[->](7.8750,-6.3750)arc[radius=.75,start angle=-90,end angle=270];
\fill(7.8750,-4.8750)circle[radius=0.05];
\draw(7.8750,-6.3750)node[above]{\(a\)};
\draw(7.8750,-3.3750)node[below]{\(b\)};
\end{tikzpicture}
\caption{The cover of an inverse limit can be seen as a sequence of covers of the corresponding
approximants}\label{fig:cover-of-approximants}
\end{figure}

\subsection{Maximal equicontinuous factors}\label{subsec:Fib4}

Let $\Omega_\textnormal{Fib}$ be the Fibonacci tiling space, where we suppose that the $a$ and $b$ tiles have lengths 
$|a|$ and $|b|$. (The lengths do not matter, as different choices
of lengths give conjugate dynamics, up to an overall scale.) The maximal equicontinuous factor 
$M_\textnormal{Fib}$ of $\Omega_\textnormal{Fib}$ is the 2-torus $\R^2/\Z^2$. Translation in $\Omega_\textnormal{Fib}$ corresponds 
to an irrational winding in $M_\textnormal{Fib}$, with slope $\phi$. The projection $\pi: \Omega_\textnormal{Fib} \to M_\textnormal{Fib}$ is 
almost everywhere one-to-one, making $\Omega_\textnormal{Fib}$ and $M_\textnormal{Fib}$ measurably conjugate dynamical systems. In 
particular, $\Omega_\textnormal{Fib}$ has pure point spectrum, with eigenfunctions that are pullbacks of the eigenfunctions
on $M_\textnormal{Fib}$ for the irrational winding. These are the Fourier modes $\psi_{m,n}(x,y) = \exp(2 \pi i(mx+ny))$
with eigenvalues $c(m+\phi n)$, 
where $c^{-1} = (\phi|a| +|b|)/\phi^2$ is the average length of a tile and $\phi = (1+\sqrt{5})/2$ is the golden
mean.

If $\tilde \Omega$ is a $G$-cover of $\Omega_\textnormal{Fib}$, then we must look at the maximal equicontinuous factor 
$\tilde M$ of $\tilde \Omega$. $\tilde M$ is itself a cover of $M_\textnormal{Fib}$ and we have a commutative diagram
\[
\begin{tikzcd}
\tilde{\Omega}\arrow[r,two heads]\arrow[d,two heads]&\Omega_\textnormal{Fib}\arrow[d,two heads]\\
\tilde{M}\arrow[r,two heads]&M_\textnormal{Fib}
\end{tikzcd}
\]
where the vertical arrows are factor maps and the horizontal arrows are covering maps, with the group of 
deck transformations for $\tilde M \to M_\textnormal{Fib}$ being a quotient of $G$. 
There are two possibilities: 

\begin{itemize}
\item If $\tilde M$ is a $G$-cover of $M_\textnormal{Fib}$, then the factor map $\tilde \Omega \to \tilde M$ is almost 
everywhere one-to-one, so $\tilde \Omega$ has pure point spectrum.
\item If $\tilde M$ is not a $G$-cover, then the factor map $\tilde \Omega \to \tilde M$ is almost everywhere 
multiple-to-one, so $L^2(\tilde \Omega)$ is not the pullback of $L^2(\tilde M)$, so $\tilde \Omega$ has mixed spectrum,
with eigenfunctions that are pullbacks of functions on $\tilde M$ and continuous spectrum that does not 
come from $\tilde M$. 
\end{itemize}

$M_\textnormal{Fib}$ is a torus, so the covers of $M_\textnormal{Fib}$ correspond to finite-index subgroups of 
$\pi_1(M_\textnormal{Fib})=\Z^2$. Since $\Z^2$ is abelian, all covers are normal and have an abelian group of deck 
transformations. In particular, if $G$ is non-abelian, then $\tilde M$ cannot be a $G$-cover of $M_\textnormal{Fib}$ 
and $\tilde \Omega$ cannot have pure point spectrum. 

On the other hand, if $G$ is abelian then we claim that $\tilde M$ is indeed a $G$-cover of $M_\textnormal{Fib}$.
To see this, we suppose that the tile lengths are $|a|=\phi$ and $|b|=1$ 
(all other cases being topologically conjugate to uniform rescalings of this case). 
The maximal equicontinuous factors $M_\textnormal{Fib}$ and $\tilde M$ are built from the eigenvalues of 
translation, which in turn are based on return vectors for large patches. We must show that
the return lattice for patches in $\tilde \Omega$ is an order-$|G|$ sublattice of the 
return lattice for patches in $\Omega_\textnormal{Fib}$. 

We can assume, without loss of generality, that $\tilde \Omega$ is constructed using 
subscripts and following rules based on a specific surjection $\phi_0: \mathbb{F}_2 \to G$. 
Since $G$ is abelian, $\phi_0$ factors through
$\Z^2$. Let $L \subset \Z^2$ be 
the abelianization of the kernel of $\phi_0$, so that $G \simeq \Z^2/L$. 
By construction, the abelianization of any return word for $\tilde \Omega$ lies in $L$,
so the return lattice for $\tilde \Omega$ has index at least $|G|$ in $\Z^2$. 
Since it cannot have index greater than $|G|$, the return lattice must be exactly $L$ and 
$\tilde M = \R^2/L$ must be a regular $G$-cover of $M_\textnormal{Fib} = \R^2/\Z^2$. 

This completes the proof of Theorem \ref{thm:Fib}.

Looking back at Example \ref{ex:non-abelian}, the kernel of $\phi_0$ was generated by $a^2$, $b^2$ and $(ab)^3$,
whose abelianizations are $(2,0)$, $(0,2)$ and $(3,3)$. The sublattice of $\Z^2$ generated by the 
abelianization of the kernel of $\phi_0$ was thus the index-2 lattice generated by 
$(1,1)$ and $(2,0)$. $\tilde M$ was a double cover of $M_\textnormal{Fib}$, but not a $G$-cover, so 
$\tilde \Omega$ did not have pure point spectrum.

\section{Other examples}\label{sec:examples} 

In this section, we present a number of illustrative examples in one and two dimensions. All are 
substitution tilings, as those spaces have the simplest inverse limit structures. Further 
background on many of these can be found in \cite{TAO}.

\subsection{One-dimensional examples}

\begin{example}[Period doubling and Thue--Morse] The period doubling and Thue--Morse tiling spaces have
isomorphic first cohomology $\Z[1/2] \oplus \Z$, so the classification of abelian covers of 
$\Omega_\textnormal{TM}$ is identical to the classification of abelian covers of $\Omega_\textnormal{PD}$.

$\check H^1(\Omega_\textnormal{PD}, \Z/2\Z) = \check H^1(\Omega_\textnormal{TM}, \Z/2\Z)= \Z/2\Z$, so there is exactly 
one double cover of $\Omega_\textnormal{PD}$ (namely $\Omega_\textnormal{TM}$) and one double cover of $\Omega_\textnormal{TM}$. 
The maximal equicontinuous factors of $\Omega_\textnormal{PD}$ and $\Omega_\textnormal{TM}$ are the 
dyadic solenoid $\ilim \R/(2^n \Z)$, which does not admit any double covers at all. Thus the maximal 
equicontinuous factor of the cover equals the maximal equicontinuous factor of the base, 
so the coincidence rank of the cover is twice the coincidence rank of the base. 
This explains why $\Omega_\textnormal{TM}$ has mixed spectrum. 

As for triple covers,  
$\check H^1(\Omega_\textnormal{TM},\Z/3\Z) = \check H^1(\Omega_\textnormal{PD},\Z/3\Z) = (\Z/3\Z)^2$. 
This leaves us with 8 non-trivial triple covers of each space. For period doubling, we can 
construct these covers by sending $(a,b)$ to $(0,1)$, $(0,2)$, $(1,0)$, $(1,1)$, $(1,2)$, 
$(2,0)$, $(2,1)$ or $(2,2)$. 
Substitution swaps two of these and permutes the remaining six: 
\begin{eqnarray} 
&& (1,1) \to (2,2) \to (1,1) \cr 
&& (0,1) \to (1,0) \to (1,2) \to (0,2) \to (2,0) \to (2,1) \to (0,1). 
\end{eqnarray} 

The automorphism $1 \leftrightarrow 2$ of $\Z/3\Z$ relates 
$(1,1) \leftrightarrow (2,2)$, $(0,1) \leftrightarrow (0,2)$, $(1,0) \leftrightarrow (2,0)$ and $(1,2) \leftrightarrow (2,1)$.
This leaves four covers, with substitution fixing the $(1,1)$ cover and cyclically permuting the other three. 

The maximal equicontinuous factor of the $(1,1)$ cover is a triple cover of the solenoid
$\ilim \R/(2^n \Z)$, so the $(1,1)$ cover of period-doubling has pure point spectrum. However, the 
maximum equicontinuous factor of the other covers are the solenoid itself, so those covers have mixed
spectrum. 

Covers of Thue--Morse have an almost identical structure. To get the following rules for a triple cover of 
Thue--Morse, first collar Thue--Morse to obtain subscripts $a$ and $b$ and then apply the exact same 
following rules (in terms of $a$ and $b$) as for the analogous cover of period doubling. 

While $\Omega_\textnormal{PD}$ and $\Omega_\textnormal{TM}$ can't be distinguished by their abelian covers, they can
be distinguished by their non-abelian covers. For period-doubling, the complex $\Gamma_n$ is homotopy
equivalent to the wedge of two circles. 
For Thue--Morse, the complex $\Gamma_n$ is shown in Figure \ref{fig:TMapproximant} and is 
homotopy equivalent to the wedge of three circles.

\begin{figure}[t]
\centering
\begin{tikzpicture}
\draw[->](-.7500,0)arc[x radius=1.5,y radius=1.5,start angle=120,end angle=90];
\draw(0,.2009)arc[x radius=1.5,y radius=1.5,start angle=90,end angle=60];
\draw[->](.7500,0)arc[x radius=1.5,y radius=1.5,start angle=-60,end angle=-90];
\draw(0,-.2009)arc[x radius=1.5,y radius=1.5,start angle=-90,end angle=-120];
\draw[->](-.7500,0)--(-.3750,-.6495);
\draw(-.3750,-.6495)--(0,-1.2990);
\draw[->](0,-1.2990)--(.3750,-.6495);
\draw(.3750,-.6495)--(.7500,0);
\draw[->](.7500,0)--(.3750,.6495);
\draw(.3750,.6495)--(0,1.2990);
\draw[->](0,1.2990)--(-.3750,.6495);
\draw(-.3750,.6495)--(-.7500,0);
\end{tikzpicture}
\caption{The collared Anderson--Putnam complex for $\Omega_\textnormal{TM}$.}
\label{fig:TMapproximant}
\end{figure}%

The action of substitution on $\hat \pi_1(\Gamma_n(\Omega_\textnormal{TM}))$ is too complicated to write down a direct limit in closed form.
To understand the difference between period doubling and Thue--Morse, we need to look at a specific 
finite non-abelian group $G$. 
As Erdin \cite{erdin2010patternequivariantrepresentationvariety} first showed, the simplest non-abelian group $G=S_3$ is enough to 
distinguish between 
$\Omega_\textnormal{PD}$ and $\Omega_\textnormal{TM}$. For $\Omega_\textnormal{PD}$, we are picking two generators 
$(g_1, g_2)$ and a map $\phi_n(a)=g_1$ and $\phi_n(b)=g_2$. But no matter what $g_1$ and $g_2$ are, $\phi_n \circ \sigma^2$
is not a surjection, so $\dlim \textnormal{Sur}(\pi_1(\Gamma_{\textnormal{PD},n}),S_3)$ is empty; there are no regular connected covers of 
$\Omega_\textnormal{PD}$ with deck group $S_3$. 

However, $\dlim \textnormal{Sur}(\pi_1(\Gamma_{\textnormal{TM},n},S_3)$ is not empty. If we pick our three generators
to be $\big ((1,2,3), (1,2,3), (1,2)\big )$, then ${\sigma_\textnormal{TM}}_\ast^2$ maps this to 
$\big ((1,3,2), (1,3,2), (1,3)\big )$, which is conjugate to our original triplet of generators. 
Based on this map, we can construct 
a regular connected cover of $\Omega_\textnormal{TM}$ with deck group $S_3$. There are three additional inequivalent elements of 
$\dlim \textnormal{Sur}(\pi_1(\Gamma_{\textnormal{TM},n},S_3)$, given by the generators $\big ((1,2,3), (1,3,2), (1,2)\big )$,
$\big ((1,2,3), (1), (1,2)\big )$, and $\big ((1), (1,2,3), (1,2)\big )$.
\end{example}

\begin{example}[Silver mean] Consider the 1-dimensional substitution $\sigma_\textnormal{sm}(a)=abb$, $\sigma_\textnormal{sm}(ab)$.
This is called the {\em silver mean} substitution since the leading eigenvalue of the substitution is the silver
mean $1+\sqrt{2}$. This substitution has a number of similarities to the 
Fibonacci substitution:
\begin{itemize} 
\item The Anderson--Putnam complex is a figure 8. The Barge--Diamond complex is homotopy equivalent to the 
Anderson--Putnam complex, so $\hat \pi_1(\Gamma_n) = \hat{\mathbb{F}}_2$. 
\item Substitution induces an isomorphism of $\mathbb{F}_2$, since $\sigma_\textnormal{sm}(ba^{-1}b)=a$ and $\sigma_\textnormal{sm}(b^{-1}a)=b$. 
Thus $\ilim \hat \pi_1(\Gamma_n) = \ilim \hat{\mathbb{F}}_2 = \hat{\mathbb{F}}_2$. 
\item There exist regular covers corresponding to all finite groups with two generators. 
\end{itemize} 

Since $\hat \pi_1(\Omega_\textnormal{sm})$ and $\hat \pi_1(\Omega_\textnormal{Fib})$ are isomorphic, we cannot distinguish 
$\Omega_\textnormal{sm}$ and $\Omega_\textnormal{Fib}$ by their covers, by their representation varieties, or by their \v Cech 
cohomology. However, they are not topologically conjugate dynamical systems, not even homeomorphic. 
It is known from \cite{MR1898159} that homeomorphic substitution tiling spaces in dimension \(1\) must have inflation factors $\lambda$
and $\lambda'$ which satisfy $\mathbb{Q}(\lambda)=\mathbb{Q}(\lambda')$. As follows from Perron--Frobenius theory,
this condition is necessary for the respective frequency modules to agree, which are also part of the natural order 
structures on $\check H^1(\Omega) = \Z^2$. Moreover, the two tilings have different point spectra, one being 
a multiple of $\Z[\phi]$ and the other a multiple of $\Z[\sqrt{2}]$.  
\end{example}

\begin{example}
Clearly, the profinite fundamental group cannot distinguish all non-homeomorphic
tiling spaces. The two 1-dimensional tiling spaces generated by the substitutions
 \begin{alignat*}{5}
    \sigma_1 &: a \rightarrow cabc, && \quad b \rightarrow ca, && \quad c \rightarrow b, \\
    \sigma_2 &: a \rightarrow bcac, && \quad b \rightarrow ca, && \quad c \rightarrow b
 \end{alignat*}
have the same (or isomorphic) inflation factor, cohomology (of rank $5$),
ordered cohomology, frequency module, asymptotic composants \cite{BD01,Gah26}, 
and profinite fundamental group (the action of the substitution on $\pi_1(\Gamma_n)$
is invertible in both cases). They do differ in their orbit separation dimension (OSD) \cite{BGG25,Gah26}, 
however, but this is a dynamical invariant, not a purely topological one.
The OSD is the box dimension of the projection of a transversal of $\Omega$ to
the MEF, measured relative to a metric which is dynamically defined \cite{BGG25} 
(although with respect to the usual translation dynamics).
\end{example}

\begin{example}
If the substitution action on $\pi_1(\Gamma_n)$ is not an isomorphism,
we have much better chances to find some deck group $G$, such that
the number of $G$-covers distinguishes our tiling from other, similar
tilings. The two substitution
\begin{alignat*}{3}
    \sigma_3 &: \quad a \rightarrow abbaa, && \quad b \rightarrow aab, \\
    \sigma_4 &: \quad a \rightarrow aaabb, && \quad b \rightarrow aab
\end{alignat*}
have the same substitution matrix as $\sigma_\textnormal{Fib}^3$, and hence the same
cohomology (of rank 2). Their action on $\pi_1(\Gamma_n)$ is not invertible,
however, and the two have non-isomophic profinite fundamental groups,
which differ also from that of Fibonacci. Indeed, both have
only two covers with deck group $S_3$ (Fibonacci has $3$), and the first
has $3$ covers with deck group $S_4$, whereas the second has none.
\end{example}

\subsection{Two or more dimensions}
In more than one dimension, the calculations are much more difficult. 
$\pi_1(\Gamma_n)$ is finitely presented but is usually not a free group.
The profinite fundamental group
$\hat\pi_1(\Omega)$ is an inverse limit constructed from endomorphisms of finitely presented
groups. Even for simple block substitutions, this is difficult to express in closed form. 
As with some difficult one-dimensional dimensional examples, we often need to look at the 
representation varieties of specific groups and to rely on computers. 

A simple setting is that of block substitutions, where the tiles are unit squares 
and the substitution sends each square to an $n \times n$ array of squares for some integer $n$.
Instead of collaring the basic tiles, we can work with a dual tiling whose vertices correspond to the
tiles of the original tiling, whose edges correspond to pairs of adjacent tiles, and whose tiles 
correspond to \(2\times 2\) blocks of original tiles. There is an induced substitution on this
dual tiling and this substitution ``forces the border'', so no further collaring is needed. We do, 
however, include also the Penrose tiling as example, to demonstrate that the restriction to 
block substitutions is only for convenience and ease of understanding.

Next we compute the fundamental group of the
AP complex $\Gamma$ of the dual tiling. Following \cite{MR1867354}, we find a
\emph{spanning tree} of the 1-skeleton of $\Gamma$, a connected set of
edges which forms a maximal tree in the 1-skeleton.
Adding any of the remaining edges (which we call bridges) creates a
loop. The fundamental group of $\Gamma$ is then finitely presented, 
with one generator for each of the bridges and relations based on the boundaries of the 2-cells. 
From the action of substitution on edges and 2-cells, we can compute the action of substitution on 
$\pi_1(\Gamma)$ and $\textnormal{Hom}(\pi_1(\Gamma),G)$ and determine the appropriate inverse and direct limits. 
The entire procedure is algorithmic and can be handled by computer programs that are written to analyze
arbitrary substitutions. 

\begin{example}[The squiral tiling] The squiral tiling \cite{BGG13} is MLD to a $3\times 3$ block substitution
tiling with two types of tiles and the rule 
\[
\begin{tikzpicture}
\draw(0,0)--(.50,0)--(.50,.50)--(0,.50)--(0,0);
\draw(.25,.25)node{\(0\)};
\draw[|->](.750,.250)--(1.250,.250);
\draw(1.5,-.5)--(3,-.5)--(3,1)--(1.5,1)--(1.5,-.5);
\draw(1.5,0)--(3,0);
\draw(1.5,.5)--(3,.5);
\draw(2,-.5)--(2,1);
\draw(2.5,-.5)--(2.5,1);
\draw(1.75,-.25)node{\(1\)};
\draw(1.75,.25)node{\(0\)};
\draw(1.75,.75)node{\(1\)};
\draw(2.25,-.25)node{\(0\)};
\draw(2.25,.25)node{\(0\)};
\draw(2.25,.75)node{\(0\)};
\draw(2.75,-.25)node{\(1\)};
\draw(2.75,.25)node{\(0\)};
\draw(2.75,.75)node{\(1\)};
\draw(4,0)--(4.50,0)--(4.50,.50)--(4,.50)--(4,0);
\draw(4.25,.25)node{\(1\)};
\draw[|->](4.750,.250)--(5.250,.250);
\draw(5.5,-.5)--(7,-.5)--(7,1)--(5.5,1)--(5.5,-.5);
\draw(5.5,0)--(7,0);
\draw(5.5,.5)--(7,.5);
\draw(6,-.5)--(6,1);
\draw(6.5,-.5)--(6.5,1);
\draw(5.75,-.25)node{\(0\)};
\draw(5.75,.25)node{\(1\)};
\draw(5.75,.75)node{\(0\)};
\draw(6.25,-.25)node{\(1\)};
\draw(6.25,.25)node{\(1\)};
\draw(6.25,.75)node{\(1\)};
\draw(6.75,-.25)node{\(0\)};
\draw(6.75,.25)node{\(1\)};
\draw(6.75,.75)node{\(0\)};
\end{tikzpicture}
\]
As with the Thue--Morse tiling, there is an obvious involution interchanging the 0 and 1 tiles. 
That is, $\Omega_\textnormal{SQ}$ is the double cover of another tiling space; that quotient space
has pure point spectrum, being an almost-1:1 extension of its maximal equicontinuous factor.

Working with the dual substitution, $\pi_1(\Gamma)$ turns out to be $\Z^2$, with substitution
cubing (or tripling in additive notation) each generator. This has two interesting consequences. 
\begin{enumerate}
\item Since $\hat \pi_1(\Omega_\textnormal{SQ})$ is abelian, all finite covers of $\Omega_\textnormal{SQ}$
have abelian deck transformation groups. These can all be understood in terms of 
$\check H^1(\Omega_\textnormal{SQ})=\Z[1/3]^2$. 
\item $\hat \pi_1(\Omega_\textnormal{SQ})$ is isomorphic to the profinite fundamental group of its
maximal equicontinuous factor, the 2-dimensional 3-adic solenoid. Since the squiral tiling has coincidence
rank 2, all covers of the squiral tiling also have coincidence rank 2. 
\end{enumerate}

Interestingly, the quotient of $\Omega_\textnormal{SQ}$ by the $0 \leftrightarrow 1$ involution has
a more complicated profinite fundamental group, with $\pi_1(\Gamma)$ having 5 generators and 7 relations. 
This quotient has many non-abelian covers, but those covers do not factor through $\Omega_\textnormal{SQ}$.
\end{example}

\begin{example}[The chair tiling]
The chair tiling is MLD to a $2 \times 2$ block substitution. Tiles are unit squares decorated by 
arrows pointing
northeast, northwest, southeast, or southwest. The last three should be viewed as rotations of the northeast arrow.
The northeast arrow substitutes as  
\[
\begin{tikzpicture}
\draw(0,0)--(.50,0)--(.50,.50)--(0,.50)--(0,0);
\draw[->](.1250,.1250)--(.3750,.3750);
\draw[|->](.750,.250)--(1.250,.250);
\draw(1.50,-.250)--(2.50,-.250)--(2.50,.750)--(1.50,.750)--(1.50,-.250);
\draw(2.0,-.250)--(2.0,.750);
\draw(1.50,.250)--(2.50,.250);
\draw[->](1.6250,-.1250)--(1.8750,.1250);
\draw[->](1.6250,.6250)--(1.8750,.3750);
\draw[->](2.1250,.3750)--(2.3750,.6250);
\draw[->](2.3750,-.1250)--(2.1250,.1250);
\end{tikzpicture}
\]
and the substitutions of the other tiles are obtained by rotating the northeast supertile. 
The AP complex of its dual has 4 vertices, \(8+8\) edges, and 19 2-cells. The spanning tree
of the 1-skeleton consists of 3 edges, so $\pi_1(\Gamma)$ has 16 generators and \(19+3\) relations. Fortunately,
this presentation can be reduced to one with two generators, $a$ and $b$, corresponding to 
horizontal and vertical edges. These do not completely commute, but are subject to the relations
\(ba^{-2}bab^{-2}a\), \(aba^{-2}b^{-2}ab\), and \(ba^{-1}b^{-1}ab^{-1}a^{-1}ba\). That is, 
\[
\pi_1(\Gamma)=\left\langle a,b\left|ba^{-2}bab^{-2}a,aba^{-2}b^{-2}ab,ba^{-1}b^{-1}ab^{-1}a^{-1}ba\right.\right\rangle.
\]
The substitution map 
sends each generator to its square. With this finitely presented $\pi_1(\Gamma)$, 
which is almost abelian, we find that the smallest non-abelian groups which 
occur as deck groups are two groups of order 27, each with 48 distinct covers 
of the chair tiling (up to conjugation). The next smallest non-abelian deck 
groups have order 81. In fact, computing up to finite groups of order \(500\), it appears that 
the non-abelian deck groups have the form 
\(\mathbb{Z}/p\mathbb{Z}\times G\), where \(p\) could be \(1\) or a prime \(\neq 3\), 
and \(G\), of order a power of 3, is formed from (nontrivial) semidirect products of 
cyclic groups. 

We now turn to the simplest abelian covers, which have index 3. These are classified by 
$\check H^1(\Omega_\textnormal{ch}, \Z/3\Z) = (\Z/3\Z)^2$. Excluding $(0,0)$, which gives a disconnected 
cover, there are eight possibilities. The covers corresponding to $(0,1)$, $(1,0)$, $(-1,0)$ and $(0,-1)$ 
and all related by rotation, as are the covers corresponding to $(1,1)$, $(-1,1)$, $(-1,-1)$ and $(1,-1)$. 
This leaves two covers to consider, $\tilde \Omega_v$ corresponding to $(0,1)$ and $\tilde \Omega_d$
corresponding to $(1,1)$, where ``\(v\)'' and ``\(d\)'' stand for ``vertical'' and ``diagonal''. 

In both cases, our tiles are arrows with subscripts in $\Z/3\Z$ and the following rules are simple. 
For $\tilde \Omega_v$, each tile has the same subscript as the tile to its left and a subscript one more
than the tile below it. For $\tilde \Omega_d$, each tile has a subscript one more than the tile to its left
and one more than the tile below it. The return vectors of 
tiles of a given subscript form a sublattice of $\Z^2$ of 
index 3, indicating that we have a triple cover of the maximal equicontinuous factor. Thus both 
$\tilde \Omega_v$ and $\tilde \Omega_d$ have pure point spectrum. 

Where $\tilde \Omega_v$ and $\tilde \Omega_d$ differ most from $\Omega_\textnormal{ch}$ is in their
cohomology. Both covers have 3-torsion in $\check H^2$ and this torsion is easily understood. In 
$\Omega_\textnormal{ch}$ there is a 2-cochain $\alpha$ that evaluates to 1 on one of the three tiles
in each L-shaped chair (say, the one that touches the other two) and 0 on the other tiles. $3\alpha$
is cohomologous to a cochain that evaluates to 1 on every tile. 
On $\tilde \Omega_v$ and $\tilde \Omega_d$, let $\tilde \alpha$ be the pullback of $\alpha$, counting
chairs regardless of subscript.

On each cover, let $\tilde \beta$ be a cochain that evaluates to 1
on each tile with a subscript 0. This is cohomologous to a cochain that counts tiles with subscript 1, 
or a cochain that counts tiles with subscript 2, so 3 $\tilde \beta$ is cohomologous to 
a cochain that evaluates to 1 on every tile. The cochains $\tilde \alpha$ and $\tilde \beta$ are 
not cohomologous, yet the classes of $3\tilde \alpha$ and $3\tilde \beta$ are the same, so the class
of $\tilde \alpha - \tilde \beta$ is a torsion element. 

 Something similar happens in several of the nine variants of the chair tiling from \cite{MR2836783} and 
 \cite{BugarinE.P.2014QCoC}, where they show that exactly the ``\(X,+\)'', ``\(/,+\)'', ``\(X,-\)'', 
 ``\(/,-\)'', and ``\(X,0\)'' variants have \(\frac{1}{3}\mathbb{Z}[1/4]\) in their respective second 
 cohomology groups, meaning that there is an element of $\check H^2$ that plays the role of $\alpha$. 
 For each triple cover of one of these spaces, we can define $\tilde \beta$ and 
 $\tilde \alpha - \tilde \beta$ is a torsion element of $\check H^2$ of that cover. 
 
The cohomology groups of the two covers of $\Omega_\textnormal{ch}$ are
\begin{align*}
\check{H}^2(\tilde{\Omega}_v)&=\mathbb{Z}[1/4]\oplus\mathbb{Z}[1/2]_2^2\oplus\mathbb{Z}_1^2\oplus\mathbb{Z}_{-1}^2\oplus(\mathbb{Z}/3\mathbb{Z})_4\\
\check{H}^2(\tilde{\Omega}_d)&=\mathbb{Z}[1/4]\oplus\mathbb{Z}[1/2]_2^3\oplus\mathbb{Z}[1/2]_{-2}\oplus\mathbb{Z}_1\oplus\mathbb{Z}_{-1}\oplus(\mathbb{Z}/3\mathbb{Z})_4
\end{align*}
where the subscripts indicate the less obvious eigenvalues.
Besides the appearance of 3-torsion in both cases, note the many factors of $\Z[1/2]$ in 
$\check H^2(\tilde \Omega_d)$. This is because the factors of $\Z[1/2]$ in $\check H^2(\Omega_\textnormal{ch})$
refer to features that run along diagonal lines and scale by 
2 with substitution. In $\tilde \Omega_v$, a sum of the two diagonal generators becomes divisible by 3,
giving us $\frac13 \Z[1/2] \oplus \Z[1/2]$. In $\tilde \Omega_d$, the subscripts on the 
features with slope $-1$ are constant, so we get one copy of $\Z[1/2]$ for features with slope 1 and
three copies of $\Z[1/2]$ for features with slope $-1$, one for each possible subscript. 

\end{example}

\begin{example}[Products of period-doubling and Thue--Morse]
An easy way to get a 2-dimensional block substitution is as the product of two 
1-dimensional substitutions. $\hat \pi_1$ of a product space is the product of $\hat \pi_1$ of 
its factors. However, not all covers of product spaces are product spaces! 

To see this, consider double covers of $\Omega_\textnormal{PD} \times \Omega_\textnormal{PD}$. 
Since 
\[ \check H^1 (\Omega_\textnormal{PD} \times \Omega_\textnormal{PD}, \Z/2\Z) 
= \check H^1( \Omega_\textnormal{PD}, \Z/2\Z)^2 = (\Z/2\Z)^2, \]
there are three non-trivial double covers of $\Omega_\textnormal{PD} \times \Omega_\textnormal{PD}$
corresponding to the elements $(1,0)$, $(0,1)$ and $(1,1)$. The first is 
$\Omega_\textnormal{TM} \times \Omega_\textnormal{PD}$ and the second is $\Omega_\textnormal{PD} \times \Omega_\textnormal{TM}$,
but the third is not a product. It is instead the 2-dimensional Thue--Morse tiling
space $\Omega_\textnormal{TM2d}$, given by the block substitution
\[
\begin{tikzpicture}
\draw(0,0)--(.50,0)--(.50,.50)--(0,.50)--(0,0);
\draw(.25,.25)node{\(0\)};
\draw[|->](.750,.250)--(1.250,.250);
\draw(1.50,-.250)--(2.50,-.250)--(2.50,.750)--(1.50,.750)--(1.50,-.250);
\draw(2.0,-.250)--(2.0,.750);
\draw(1.50,.250)--(2.50,.250);
\draw(1.75,0)node{\(0\)};
\draw(1.75,.5)node{\(1\)};
\draw(2.25,0)node{\(1\)};
\draw(2.25,.5)node{\(0\)};
\draw(3.5,0)--(4.00,0)--(4.00,.50)--(3.5,.50)--(3.5,0);
\draw(3.75,.25)node{\(1\)};
\draw[|->](4.250,.250)--(4.750,.250);
\draw(5.00,-.250)--(6.00,-.250)--(6.00,.750)--(5.00,.750)--(5.00,-.250);
\draw(5.5,-.250)--(5.5,.750);
\draw(5.00,.250)--(6.00,.250);
\draw(5.25,0)node{\(1\)};
\draw(5.25,.5)node{\(0\)};
\draw(5.75,0)node{\(0\)};
\draw(5.75,.5)node{\(1\)};
\end{tikzpicture}
\]
These spaces can also be understood as quotients of $\Omega_\textnormal{TM} \times \Omega_\textnormal{TM}$, 
a space with two commuting involutions; one switches 0's and 1's in the first factor, while the other
switches 0's and 1's in the second factor. There is also a diagonal involution that switches 0's and 1's 
in both factors. The quotients of $\Omega_\textnormal{TM} \times \Omega_\textnormal{TM}$ by these three
involutions are $\Omega_\textnormal{PD} \times \Omega_\textnormal{TM}$, $\Omega_\textnormal{TM} \times \Omega_\textnormal{PD}$ and $\Omega_\textnormal{TM2d}$. 

We next consider covers of $\Omega_\textnormal{TM2d}$. In the dual tiling,
$\Gamma$ has two vertices, 8 edges,
and 8 tiles. A spanning tree
consists of a single edge, connecting the two vertices, so the fundamental
group of $\Gamma$ has 8 generators and $8+1$ relations. This can be simplified
to a group with 4 generators (2 horizontal, 2 vertical) and
5 relations. We then consider the endomorphism of this group induced by
substitution. These answers are simple enough to determine the number of covers with
small deck groups. For instance, there are two 2-fold covers ($\Omega_\textnormal{TM} \times \Omega_\textnormal{TM}$
and a quotient of $\Omega' \times \Omega'$ by a $\Z/4\Z$ diagonal action, 
where $\Omega'$ is the unique double cover of $\Omega_\textnormal{TM}$), and four with
deck group $A_4$.

\end{example}

\indent For the Penrose tiling, we will make use of the following proposition.

\begin{prop}
\label{prop:induced-isomorphism}
If \(\phi:G\rightarrow G\) is surjective, then \(\hat{\phi}:\hat{G}\rightarrow\hat{G}\) is an isomorphism.\\
\indent In particular, if \(X\) is a finite CW-complex and \(X^1\) its \(1\)-skeleton, and \(\phi:X\rightarrow X\)
is cellular with \(\phi_\ast^1:\pi_1(X^1)\rightarrow\pi_1(X^1)\) surjective, then
\(\hat{\phi}_\ast:\hat{\pi}_1(X)\rightarrow\hat{\pi}_1(X)\) is an isomorphism.
\begin{proof}
  \(\hat{\phi}\) is surjective, since given any finite quotient \(G/N\) of the codomain, the corresponding quotient
  in the domain is \(G/\phi^{-1}(N)\) by definition. Then, since \(\hat{G}\) is profinite, it is residually finite,
  and is Hopfian, therefore \(\hat{\phi}\) is also injective.\\
  \indent For the second part, \(\phi\) cellular implies that the normal subgroup \(N\) generated by attaching the
  \(2\)-cells is preserved under \(\phi\), i.e. \(\phi(N)\leq N\). \(\phi_\ast^1\) surjective then descends to a surjective
  \(\phi_\ast:\pi_1(X)\rightarrow\pi_1(X)\), where \(\pi_1(X)=\pi_1(X^1)/N\).
  By the first part, the induced map on the profinite completion is an isomorphism.
\end{proof}
\end{prop}

\begin{example}[Penrose]
Using the Robinson triangle decomposition of the Penrose tiling, one can show that \(\pi_1(\Gamma)\) has \(5\) generators
\[
\pi_1(\Gamma)=\left\langle a,b,c,d,e\left|\begin{array}{c}
a^{-1}b^{-1}cadbd^{-1}c^{-1},
ceae^{-1}bc^{-1}a^{-1}b^{-1}\\
adb^{-1}ea^{-1}be^{-1}d^{-1},
dc^{-1}be^{-1}d^{-1}ecb^{-1}\\
adc^{-1}a^{-1}d^{-1}ece^{-1},
e^{-1}bdc^{-1}ecb^{-1}d^{-1}\\
dbd^{-1}a^{-1}b^{-1}cd^{-1}adc^{-1},
e^{-1}c^{-1}be^{-1}adecd^{-1}a^{-1}eb^{-1}
\end{array}\right.\right\rangle,
\]
that \(\pi_1(\Gamma^1)=\mathbb{F}_{37}\), and that \(\sigma_\ast^1:\pi_1(\Gamma^1)\rightarrow\pi_1(\Gamma^1)\) is an isomorphism.
By Proposition \ref{prop:induced-isomorphism}, \(\pi_1^\textnormal{et}(\Omega)=\hat{\pi}_1(\Gamma)\). In other words,
all finite, regular covers of the Penrose tiling space arise as finite-index subgroups of \(\pi_1(\Gamma)\).\\
\indent Note that \(\check{H}^1(\Omega)=\mathbb{Z}^5\), which is of rank \(5\), agreeing with our computation here.
\end{example}

\noindent A similar calculation holds for the Ammann chair tiling. This process, unfortunately, does not apply to the
octagonal Ammann--Beenker tiling (using squares and \(45\)-degree rhombi as prototiles), because \(\sigma_\ast^1\) on
\(\pi_1(\Gamma^1)=\mathbb{F}_8\langle f_i\rangle\) is given by $f_i \rightarrow f_i\, f_{i+5}^{-1}\, f_{i+4}^{-1}$,
and is not an isomorphism. One can still use our machinery to compute the finite parts of \(\pi_1^\textnormal{et}(\Omega)\),
but it is unclear what its complete inverse limit structure is.

\appendix

\section{Connection to gauge theory}\label{sec:gauge}

The representation variety $\textnormal{Hom}(X,G)/G$ has a natural interpretation
in terms of gauge theory, as noticed already in \cite{Sad07}. 
Erdin \cite{erdin2010patternequivariantrepresentationvariety} has exploited
this idea to show that $\textnormal{Hom}(X,G)/G$ is a topological invariant.

Here, we use a slightly different, but equivalent approach to elucidate
the connection to gauge theory. Classically, in gauge theory one considers
a connected, smooth manifold $M$, and over it a priciple $G$-bundle
$\pi:P(M,G) \rightarrow M$, with $G$ a connected Lie group 
(for background on gauge theory, see e.g.~\cite{KN63}). We note
that the fibre $\pi^{-1}(x)$ over $x$ is diffeomorphic to $G$, but it
is not a group---there is no preferred point playing the role of the
identity. However, there is a smooth $G$-action on $P$ from the right,
which preserves the fibres, such that for any $p\in P$ the map
$g \rightarrow pg$ is a bijection between $G$ and the fibre containing $p$.
We will need only trivial bundles, meaning that there exists a global
trivialisation of $P$, that is, a diffeomorphism
$P \rightarrow M \times G$, whose pullback defines a coordinate
system on $P$. Such a coordinate system is called a \emph{gauge}.
In general, one only has local trivialisations over charts of $M$,
which one needs to patch together. Equivalent to the choice of a
trivialisation is a choice of a section $s:M \rightarrow P$, such
that $\pi(s(x))=x$ for all $x \in M$. With the help of the right
action of $G$, such a section can then be extended to a trivialisation,
where the points in the section are mapped to the identity of $G$.
Any smooth function $\alpha: M \rightarrow G$ induces a change of
trivialisation $(x,g) \mapsto (x,\alpha(x)g)$, respectively a
change of section $s(x) \mapsto \alpha(x)s(x)$. As this is a
change of coordinate system (gauge), it is called a
\emph{gauge transformation}.

In each tangent space $T_p(P)$ there is a preferred ``vertical''
subspace $V$, consisting of all vectors tangent to the fibre. In
contrast, there is no natural ``horizontal'' complement $H$ to $V$.
This requires an additional structure, a connection
$\mathcal{C}$. A connection defines a smoothly varying field of
horizontal subspaces in the tangent bundle, such that
$H(pg) = g_*(H(p))$, where $g_*$ is the induced action of the
right action of $G$ on the tangent space $T_p(P)$. If $\gamma(t)$
is a path in $M$ from $x$ to $y$, and $p_x$ any point in the fibre
of $x$, with the help of the connection the path $\gamma$ can be
lifted to a unique path $\tilde{\gamma}(t)$ with only horizontal
tangent vectors, and which starts in $p_x$ and ends at some point
$p_y$ in the fibre of $y$. Since, for any $g\in G$, the horizontal
path starting in $p_xg$ ends in $p_yg$, at a fixed gauge the
horizontal lift induces a right translation by some group element
$g$ of the fibre at $y$ relative to the fibre at $x$. This group
element $g$ will depend on the path $\gamma$. We call this the
\emph{horizontal transport} along $\gamma$ of the fibre at $x$ to
the fibre at $y$. In particular, if $\gamma$ is a loop starting
and ending at $x$, the connection associates with it a group
element $g$, such that horizontal transport sends a point $p$
above $x$ to $pg$. The group elements from different loops at
$x$ form a subgroup of $G$, the \emph{holonomy group} of the
connection. Relative to a fixed gauge, the holonomy groups at
different points are conjugate subgroups of $G$. A gauge
transformation maps a holonomy group to one of its conjugates.

A connection is called flat if every contractible loop at $x$
induces the trivial transformation on the fibre at $x$. This
means that loops at $x$ from a single homotopy class, lifted
to the same starting point $p$ above $x$, define the same
transformation on the fibre at $x$, and up to conjugation in
$G$ (to account for the choice of $p$) we have a well-defined
homomorphism $\pi_1(X,x) \rightarrow G$. Conversely, such a
homomorphism uniquely determines a flat connection, up to
gauge transformations. So, we have the well-known result that
\[
  \textnormal{Hom}(\pi_1(M),G)/G \simeq \{ \textrm{flat connections on } P \}
  / \{ \textrm{gauge transformations} \}.
\]

We will now show that an analogous result holds also for tiling
spaces. In the classical theory above, a connection is usually
defined via a connection 1-form. Here, instead of trying to
bring differential forms to tiling spaces (which is possible),
we will use an integral, rather than a differential form of gauge
theory. For this purpose we use the fact that tilings, as well
as their approximant spaces, have a natural cell complex structure.
In physics, this integral from of gauge theory is known as lattice 
gauge theory, where the underlying cell complex is assumed to be
derived from a lattice.

To start with, we assume for the moment that $X$ is a finite cell
complex. We recall that a connection is a device to define horizontal
transport along a path $\gamma$ in base space. Since we are only
interested in topological questions, we can confine the path
to be contained in the $1$-skeleton of $X$, starting and ending
at a $0$-cell (vertex). We also note, that in such a discrete
setting, there is no need for the group $G$ to be differentiable.
We can also work with a discrete, or even finite group $G$.
At a fixed gauge, a connection on the $G$-bundle $X \times G$
is then given by a function $\beta$, which assigns to each
$1$-cell $c$ a group element $\beta(c)$. If $\gamma$ is a path
consisting of the concatenation of a sequence of $k$ $1$-cells
$\{ c_i : i=1,\ldots,k \}_{i\ \in I}$ (in a fixed order), we
extend the definition of $\beta$ by setting $\beta(\gamma) =
\Pi_{i=1}^k \beta(c_i)$. $\beta(\gamma)$ is then the group
element describing the horizontal transport along $\gamma$.
A flat connection is one for which the path-ordered product
$\Pi_{c \in \partial C} \beta(c_i)$ is the identity for every
$2$-cell $C$ in the complex (for a non-flat connection, the curvature
sits on $2$-cells $C$, and is given by the (conjugacy class of)
$\Pi_{c \in \partial C} \beta(c_i)$). If the connection is flat,
it defines a homomorphism $\pi_1(X,x) \rightarrow G$, which
is well-defined up to conjugation in $G$, to make up for the
choice of a starting point of the lift of a loop. A gauge transformation
now is given by a function $\alpha$ assigning a group element
to every $0$-cell. Under such a gauge transformation, if
$\gamma$ is a path starting at $x$ and ending at $y$,
$\beta(\gamma)$ is sent to $\alpha(x)\beta(\gamma)\alpha^{-1}(y)$.
As in the differentiable manifold case, we now have an isomorphism
\[
  \textnormal{Hom}(\pi_1(X),G)/G \simeq \{ \textrm{flat connections on\ } X \times G \}
  / \{ \textrm{gauge transformations} \}.
\]

So far, this equivalence holds for $X$ a finite cell complex, but
obviously this can be extended on either side to inverse limit spaces,
so that the above formula also holds for $X$ a general tiling space, and
$\pi_1(X)$ replaced by the profinite fundamental group  $\hat{\pi}_1(X)$.

\section*{Acknowledgments}

The early stages of this work have profited enormously from discussions with
Eike Lau, who pointed out the relevance of profinite fundamental groups and
finite regular covers in the context of inverse limit spaces.

This work was supported by Deutsche Forschungsgemeinschaft (DFG, German Research 
Foundation), via TRR 358/1 2023--491392403 (FG), and NSF DMS-1937215 (JL,LS).

\printbibliography

\end{document}